\newtheorem{thm}{Theorem}[section]
\theoremstyle{plain}
\newtheorem{lemma}[thm]{Lemma}
\newtheorem{prop}[thm]{Proposition}
\theoremstyle{remark}
\newtheorem{rem}[thm]{Remarks}
\newtheorem{remark}[thm]{Remark}
\newcommand{\Tone}{\mathbb{T}}
\newcommand{\rs}{r_{\star}}
\newcommand{\trho}{\tilde{\rho}}
\newcommand{\brho}{\bar{\rho}}
\newcommand{\bG}{\bar{\mathcal{G}}}
\newcommand{\R}{\mathbb R}
\numberwithin{equation}{section}
\begin{document}
\title[ASD well-posedness, stability, and bifurcation]{On well-posedness, stability, and bifurcation for the axisymmetric surface diffusion flow}

\author[J. LeCrone]{Jeremy LeCrone}
\address{Department of Mathematics \\
 		Vanderbilt University \\
		 Nashville, TN USA}
\email{jeremy.lecrone@vanderbilt.edu}%
\urladdr{http://www.vanderbilt.edu/math/people/lecrone/}

\author[G. Simonett]{Gieri Simonett}
\address{Department of Mathematics \\
		Vanderbilt University\\
		Nashville, TN USA}
\email{gieri.simonett@vanderbilt.edu}%
\urladdr{http://www.vanderbilt.edu/math/people/simonett}

\subjclass[2000]{Primary 35K93, 53C44 ; Secondary 35B35, 35B32, 46T20} 
\keywords{Surface diffusion, well posedness, periodic boundary conditions, maximal regularity, nonlinear stability, bifurcation, implicit function theorem}%

\begin{abstract}
We study the axisymmetric surface
diffusion flow (ASD), a fourth-order geometric evolution law. 
In particular, we prove that ASD generates a real analytic semiflow in the space of 
$(2 + \alpha)$-little-H\"older regular surfaces of revolution embedded in 
$\mathbb{R}^3$ and satisfying periodic boundary conditions.
Further, we investigate the geometric properties of solutions to ASD.
Utilizing a connection to axisymmetric surfaces with constant mean curvature,
we characterize the equilibria of ASD. Then, focusing on the family of 
cylinders, we establish results regarding stability, instability 
and bifurcation behavior, with the radius acting as
a bifurcation parameter.
\end{abstract}
\maketitle

\section{Introduction}

The central focus of this article is the development of an analytic setting 
for the axisymmetric surface diffusion flow (ASD) with periodic boundary conditions.
We establish well-posedness of ASD and investigate geometric properties of solutions, 
including characterizing equilibria and studying their stability, instability and bifurcation behavior.
We establish and take full advantage of \emph{maximal regularity} 
for ASD. Most notably, with maximal regularity we gain access to the implicit function 
theorem, a very powerful tool in nonlinear analysis and dynamical systems theory. 
We begin with a motivation and derivation of the general surface diffusion flow, 
of which ASD is a special case, and we introduce the general outline of the paper.

The mathematical equations modeling surface diffusion go back to a paper by Mullins \cite{MUL57} 
from the 1950s, who was in turn motivated by earlier work of Herring \cite{HER51}. Both of these authors
investigate phenomena witnessed in sintering processes, a method by which
objects are created by heating powdered material to a high temperature, while remaining below the melting
point of the particular substance. When the applied temperature reaches a critical value, the 
atoms on the surfaces of individual particles will diffuse across to other particles, fusing the
powder together into one solid object. In response to gradients of the chemical potential 
along the surface of this newly formed object, the surface atoms may undergo diffusive mass transport 
\emph{on} the surface of the object, attempting to reduce the surface free energy. Given the right 
conditions -- temperature, pressure, grain size, sample size, etc. -- the mass flux due to this chemical
potential will dominate the dynamics and it is the resulting morphological evolution 
of the surface which the surface diffusion flow aims to model.
We also note that the surface diffusion flow has been used to model the motion of surfaces 
in other physical processes (e.g. growth of crystals and nano-structures). 
The article \cite{CT94} contains the formulation of the model which we present  
below, which is set in a more general framework than the original model developed by Mullins. 

\subsection{The Surface Diffusion Flow}\label{Sec:IntroSD}


From a mathematical perspective, the governing equation for motion via surface diffusion can be expressed
for hypersurfaces in arbitrary space dimensions. In particular, let $\Gamma \subset \mathbb{R}^n$ be a 
closed, compact, immersed, oriented Riemmanian manifold with codimension 1. Then we denote by 
$\mathcal{H} = \mathcal{H}(\Gamma)$ the (normalized) mean curvature on $\Gamma$, which is simply the 
sum of the principle curvatures on the hypersurface, and $\Delta_{\Gamma}$ denotes the Laplace--Beltrami
operator, or \emph{surface Laplacian}, on $\Gamma$. 
The motion of the surface $\Gamma$ by surface diffusion is then governed by the equation
\[
V = \Delta_{\Gamma} \mathcal{H},
\]
where $V$ denotes the normal velocity of the surface $\Gamma$. 
If $\Gamma$ encloses a region $\Omega$ we assume the unit normal field to be pointing outward.
%
A solution to the surface diffusion problem on the interval $J \subset \mathbb{R}_+$, with 
$0 \in J$, is a family $\{ \Gamma(t): t \in J \}$ of closed, compact, immersed hypersurfaces in $\mathbb{R}^n$ 
which satisfy the equation
\begin{equation}\label{SD}
\begin{cases}
V(\Gamma(t)) = \Delta_{\Gamma(t)} \mathcal{H}(\Gamma(t)), &\text{$t \in \dot{J} := J \setminus \{ 0 \},$}\\
\Gamma(0) = \Gamma_0,
\end{cases}
\end{equation}
for a given initial hypersurface $\Gamma_0$. It can be shown that solutions to \eqref{SD} 
are volume--preserving, in the sense that the signed volume of the region $\Omega$
is preserved along solutions. Additionally, \eqref{SD} is surface--area--reducing. 
It is also interesting to note that the surface diffusion flow can be viewed as the $H^{-1}$-gradient flow 
of the area functional,
a fact that was first observed in \cite{Fi00}. This particular structure has been exploited in 
\cite{May00,May01} for devising numerical simulations.



For well-posedness of \eqref{SD} we mention \cite{EMS98},  
where it is shown that 
\eqref{SD} admits a unique 
local solution for any initial surface $\Gamma_0\in C^{2+\alpha}$.
Additionally, the authors of \cite{EMS98} show that any initial surface 
that is a small $C^{2+\alpha}$--perturbation of a sphere admits a global
solution which converges to a sphere at an exponential rate.
This result was improved in \cite{EMu10} to admit initial surfaces
in the Besov space $B^s_{p,2}$ with $s=5/2-4/p$ and $p>(2n+10)/3$.
For dimensions $n<7$ note that this allows for initial surfaces 
which are less regular than $C^2$. An independent theory for
existence of solutions to higher--order geometric evolution equations,
which also applies to the surface diffusion flow,
was developed in \cite{HuiPo99, Po96};
see \cite{McCWW11,Sha04} for a discussion of some limitations of these results.

More recent results for initial surfaces with low regularity are contained in \cite{As12, KoLa12}.
The author of \cite{As12} obtains 
existence and uniqueness of local solutions for various geometric evolution 
laws (including the surface diffusion flow),
in the setting of entire graphs with initial regularity $C^{1+\alpha}$. 
Surface diffusion is also one of several evolution laws for
which the authors of \cite{KoLa12} establish solutions 
under very weak, and possibly optimal, regularity assumptions on initial data.
In particular, their results guarantee existence and uniqueness of global 
analytic solutions, in the setting of entire graphs over $\R^n$,  
for Lipschitz initial data $u_0$ with small Lipschitz constant $\|\nabla u_0\|_{L_\infty(\R^n)}$.

For interesting new developments regarding lower bounds on the existence time
of solutions to the surface diffusion flow in $\R^3$ and $\R^4$, we refer the reader to
\cite{McCWW11, Wh11, Wh12}.
In particular, it is shown in \cite{Wh12} that the 
flow of a surface in $\mathbb{R}^3$ which
is initially close to a sphere in $L_2$ (that is, the  $L_2$-norm of the trace-free 
part of the second fundamental form is sufficiently small) 
is a family of embeddings that exists globally and converges at
an exponential rate to a sphere. 
The results of \cite{McCWW11, Wh11} regarding concentration of curvature
along solutions may prove important in the analytic investigation of solutions approaching
finite--time pinch--off.



In the context of geometric evolution equations, 
such as the mean curvature flow \cite{Hu84, Hu87}, 
the surface diffusion flow, or the Willmore flow \cite{KS01, KS02},
the underlying governing equations are often expressed 
by evolving a smooth family of immersions 
$X:M\times (0,T)\to\R^n,$
where $M$ is a fixed smooth oriented manifold and $\Gamma(t)$
is the image of $M$ under $X(\cdot,t)$.
In this formulation, the surface diffusion flow is given by
\begin{equation}
\label{immersion}
\partial_t X=(\Delta H)\nu,\qquad X(\cdot,0)M=\Gamma_0,
\end{equation}
where $\nu$ is the normal to the surface $\Gamma(t)$.
This formulation is invariant under the group of sufficiently smooth diffeomorphisms of $M$,
and this implies that \eqref{immersion} is only weakly parabolic.
A way to infer that \eqref{immersion} is not parabolic is to observe 
that if $X(\cdot,t)$ is a solution, then 
so is $X(\phi(\cdot),t)$, for any diffeomorphism $\phi$.
Given a smooth solution $X$ one can therefore
construct nonsmooth (i.e.\ non $C^\infty$) solutions by choosing $\phi$
to be nonsmooth. If \eqref{immersion} were parabolic, all solutions would
have to be smooth, as was pointed out in~\cite{An94} for the mean curvature flow.
%

Nevertheless, existence of unique smooth solutions for the mean curvature flow, 
for compact $C^\infty$--initial surfaces $X(\cdot,0)M$, 
can be derived by making use of the Nash-Moser implicit function theorem, 
see for instance \cite{GaHa86, Ha82a}.

The Nash-Moser implicit function theorem 
may also lead to a successful treatment of the 
surface diffusion flow \eqref{immersion}.
However, there is an alternative approach 
to dealing with the motion of surfaces by curvature
(for example the mean and volume--preserving mean curvature flows, 
the surface diffusion flow, the Willmore flow) which removes the issue
of randomness of a parameterization:
if one fixes the parameterization as a graph in normal direction with respect to a 
reference manifold and then expresses the governing equations in terms of
the graph function, the resulting equations are
{\bf quasilinear} and {\bf strictly parabolic}.
This approach has been employed in \cite{EMS98,ES98b,ES98c,Sim01}, 
and also in \cite{HuiPo99}. 
In the particular case of the surface diffusion flow, 
one obtains a fourth--order quasilinear parabolic evolution equation.
One can then apply well--established results for quasilinear parabolic equations. 
The theory in \cite{Am93,CS01}, for instance, works for any quasilinear parabolic evolution equation,
no matter whether it is cast as a more traditional PDE in Euclidean space, 
or an evolution equation living on a manifold. 
This theory also renders access to well--known principles from dynamical systems.

The approach of parameterizing the unknown 
surface as a graph in normal direction has also been applied to a wide array of 
free boundary problems, including problems in phase transitions
(where the graph parameterization and its extension into the bulk phases 
is often referred to as the Hanzawa transformation), 
see for example \cite{PSZ12} and the references therein.


The literature on geometric evolution laws often
considers the question of short-time existence standard
and refers to the classical monographs \cite{Ei69, EZ98, LSU}.
However, when the setting is a manifold rather than Euclidean space,
existence theory for parabolic (higher order) equations does not belong to the standard theory
and requires a proof, a point that is also acknowledged in \cite{HuiPo99}, see page 61.



\subsection{Axisymmetric Surface Diffusion (ASD)}\label{Sec:IntroASD}


For the remainder of the paper, we focus our attention on 
the case of $\Gamma \subset \mathbb{R}^3$ an embedded
surface which is symmetric about an axis of rotation (which we take to be the $x$--axis,
without loss of generality) and satisfies prescribed periodic boundary conditions on some fixed
interval $L$ of periodicity (we take $L = [-\pi, \pi]$ and enforce $2 \pi$ periodicity,
without loss of generality). In particular, the axisymmetric surface $\Gamma$  
is characterized by the parametrization
\[
\Gamma = \Big\{ (x, r(x) \cos(\theta), r(x) \sin(\theta)): \; x \in \mathbb{R}, \; \theta \in [-\pi, \pi] \Big\},
\]
where the function $r : \mathbb{R} \rightarrow (0, \infty)$ is the \emph{profile function}
for the surface $\Gamma$. Conversely, a profile function $r: \mathbb{R} \rightarrow (0, \infty)$
generates an axisymmetric surface $\Gamma = \Gamma(r)$ via the parametrization given above. 

We thus recast the surface
diffusion problem as an evolution equation for the profile functions $r = r(t)$. In particular,
one can see that the surface $\Gamma(r)$ inherits the Riemannian metric
\[
g = (1 + r_x^2) \, dx \wedge dx + r^2 \, d \theta \wedge d \theta, 
\]
from the embedding $\Gamma \hookrightarrow \mathbb{R}^3$, with respect to the surface coordinates 
$(x, \theta)$; where the subscript $f_{x_i} := \partial_{x_i} f$ denotes the derivative of
$f$ with respect to the indicated variable $x_i$. It follows that the (normalized) 
mean curvature $\mathcal{H}(r)$ of the surface is given by $\mathcal{H}(r) = \kappa_1 + \kappa_2$, where
\[
\kappa_1 = \frac{1}{r \sqrt{1 + r_x^2}} \quad \text{and} \quad \kappa_2 = \frac{- r_{xx}}{(1 + r_x^2)^{3/2}}
\]
are the \emph{azimuthal} and \emph{axial} principle curvatures, respectively, on $\Gamma(r)$. 
Meanwhile, the Laplace--Beltrami operator on $\Gamma$ and the normal velocity of $\Gamma = \Gamma(t)$ are 
\begin{align*}
\Delta_{\Gamma(r)} &= \frac{1}{r \sqrt{1 + r_x^2}} \left( \partial_x \left[ \frac{r}{\sqrt{1 + r_x^2}} \partial_x \right] + 
\partial_{\theta} \left[ \frac{\sqrt{1 + r_x^2}}{r} \partial_{\theta} \right] \right),\\
V(t) &= \frac{r_t}{\sqrt{1 + r_x^2}} \, .
\end{align*}
Finally, substituting these 
terms into the equation \eqref{SD} and simplifying, we arrive at the expression 
\begin{equation}\label{ASD}
\left\{
\begin{aligned}
&r_t = \frac{1}{r} \, \partial_x \left[ \frac{r}{\sqrt{1 + r_x^2}} \; 
\partial_x \left(\frac{1}{r \sqrt{1 + r_x^2}} - 
\frac{r_{xx}}{(1 + r_x^2)^{\frac{3}{2}}} \right) \right], \quad &\text{$t > 0, \, x \in \mathbb{R},$}\\
&r(t,x + 2 \pi) = r(t,x), & \text{$t \geq 0, \, x \in \mathbb{R}$},\\
&r(0,x) = r_0(x),  & \text{$ x \in \mathbb{R},$}
\end{aligned} 
\right.
\end{equation}
for the periodic axisymmetric surface diffusion problem.
To simplify notation in the sequel, we define the operator
\begin{align}\label{Eqn:GDefined}
G(r) &:= \frac{1}{r} \; \partial_x \left[ \frac{r}{\sqrt{1 + r_x^2}} \; \partial_x  
\mathcal{H}(r) \right],
\end{align} 
which is formally equivalent to the right hand side of the first equation in \eqref{ASD}.

\medskip
\goodbreak

The main results of this paper address
\begin{itemize}
\item[(a)] existence, uniqueness, and regularity of solutions for \eqref{ASD},
\item[(b)] nonlinear stability and instability of equilibria for \eqref{ASD}, 
\item[(c)] bifurcation of equilibria from the family of cylinders, with the radius serving as bifurcation parameter.
\end{itemize}
As mentioned in the introduction, we develop and take full advantage of maximal regularity for ASD.
In this setting, the results in (a) follow in a straight forward way from \cite{CS01}.
Existence results could also be based on the approach developed in \cite{As12, KoLa12}, but
we prefer to work within the well-established framework of continuous maximal regularity. 
It provides a general and flexible setting for investigating further qualitative properties of solutions. 
The novelty of the results in (b) is analysis of the {\em nonlinear} structure
of solutions. 
Corresponding results for {\em linear} stability and instability
of equilibria are contained in \cite{BBW98} where a precise characterization of the eigenvalues
of the linearized problem is given.
Based on a formal center manifold analysis, the authors in \cite{BBW98} predict subcritical 
bifurcation of equilibria at the critical value of radius $\rs = 1$, 
but no analytical proof is provided.
Thus, our result in (c) appears to be the first rigorous proof of bifurcation. 
In addition, we show that the bifurcating equilibria (which are shown to coincide with the Delaunay unduloids) 
are {\em nonlinearly} unstable. We note that previous results show {\em linear} instability of unduloids
and we refer the reader to Remark~\ref{remark-bifurcation} for a more detailed discussion.

The publication \cite{BBW98} has served as a source of inspiration for our investigations.
It provides an excellent overview of the complex qualitative behavior of ASD, 
with results supported by analytic arguments and numerical computations.


The first investigations of evolution of an axisymmetric surface via surface diffusion 
can be traced back to the work of Mullins and Nichols \cite{MN65, MN65a} in 1965, where one can 
already see some of the benefits of this special setting. 
Taking advantage of the symmetry of the problem, they  
developed an adequate scheme for numerical methods 
and they already predicted the finite time pinch--off of tube--like 
surfaces via surface diffusion, a feature similar to the mean curvature flow and a natural
phenomenon to study in exactly this axisymmetric setting.  
Research continued to focus on pinch--off behavior using numerical methods, 
c.f. \cite{BBW98, CFM95, CFM95a, CFM96, DMVW, MS79, MS82}, 
wherein many schemes were developed to handle the continuation of solutions after the change of 
topology at the moment of pinch--off. Unlike the related behavior for the mean curvature flow, 
pinch--off for the surface diffusion flow remains a numerical observation that has yet
to be verified analytically.

Much research has also focused on the numerical investigation of 
stability and instability of cylinders
under perturbations of various types, see \cite{BBW98, CFM95, CFM96} for instance.
In an important construction from \cite{CFM96}, the authors observe destabilization of a
particular perturbation of a cylinder (i.e. divergence from the cylinder) due to second--order effects
of the flow, whereas the first--order (linear) theory predicts asymptotic stability.
In fact, their formulation produces conditions under which a perturbation will destabilize
due to $n^{\text{th}}$--order effects, where $(n-1)^{\text{st}}$--order analysis predicts stability.
This result highlights the importance of studying the full nonlinear behavior of solutions to ASD.

We proceed with an outline of the article and description of our main results.
In Section~\ref{Sec:WellPosedness}, we establish existence of solutions to \eqref{ASD}
in the framework of continuous maximal regularity. 
In particular, we have existence and uniqueness of maximal solutions
for initial surfaces which are $(2 + \alpha)$--little--H\"older continuous.
Solutions are also analytic in time and space,
for positive time, with a prescribed singularity at time $t = 0$. 
Additionally, we state conditions for global existence 
of the semiflow induced by \eqref{ASD}. 
We rely on the theory developed in \cite{LeC11} and the 
well--posedness results for quasilinear equations with maximal regularity provided in \cite{CS01}.
We include comments on how we prove these well--posedness results in an appendix.

In Section~\ref{Sec:Equilibria}, we characterize 
the equilibria of ASD using results of Delaunay \cite{DE41} and Kenmotsu \cite{KEN80} 
regarding constant mean curvature surfaces in the axisymmetric setting. 
We conclude that all equilibria of \eqref{ASD} must fall into the family of undulary curves, 
which includes all constant functions $r(x) \equiv \rs > 0$ (corresponding to the cylinder of radius $\rs$)
and the two--parameter family of nontrivial undulary curves $R(B,k)$. 

In Section~\ref{Sec:Stability}, we prove that the family of cylinders
with radius $\rs > 1$ are asymptotically, exponentially stable under a large class of nonlinear perturbations,
which maintain the same axis of symmetry and satisfy the prescribed periodic boundary conditions.
In particular, given $\rs > 1$, we prove that any sufficiently small $(2 + \alpha)$--little--H\"older regular
perturbation produces a global solution which converges exponentially fast to the 
cylinder of radius $\rs + \eta > 1$. The value $\eta$ is determined by the volume
enclosed by the perturbation, which may differ from the volume of the original cylinder.
In proving this result, we note that the spectrum of the linearized 
equation at $\rs$ is contained in the \emph{left half} of the complex plane,
though it will always contain 0 as an eigenvalue.
By reducing the equation, essentially to the setting of volume--preserving
perturbations of a cylinder, we are able to eliminate the zero eigenvalue.
We then prove nonlinear stability in the reduced setting, 
utilizing maximal regularity methods on exponentially weighted function spaces, 
and we transfer the result back to the (full) problem via a \emph{lifting} operator. 

In Section~\ref{Sec:Instability}, we prove nonlinear instability 
of cylinders with radius $0 < \rs < 1$.
We take nonlinear instability to be the logical negation of stability,
which one may interpret as the existence of at least one {\em unstable perturbation},
see Theorem~\ref{Thm:Instability} for a precise statement.
This result makes use of a contradiction 
technique reminiscent of results from the theory of ordinary differential equations, 
c.f. \cite{PW10}. By isolating the linearization of the governing equation, 
one takes advantage of a \emph{spectral gap} and associated spectral 
projections in order to derive necessary conditions for {\em stable perturbations}, 
which in turn lead to a contradiction. 

We note that previous instability results for ASD have focused
primarily on classifying stable and unstable {\em eigenmodes} of equilibria,
which gives precise results on the behavior of solutions associated with unstable
perturbations. However, these methods are limited to 
the behavior of solutions under the linearized flow.

Finally, in Section~\ref{Sec:Bifurcation} we apply classic methods of Crandall and Rabinowitz \cite{CR71} 
to verify the subcritical bifurcation structure of all points of intersection between the family
of cylinders and the disjoint branches of unduloids. In particular, taking the inverse of the radius
$\lambda = 1 / \rs$ as a bifurcation parameter, we verify the existence of continuous families 
of nontrivial equilibria which branch off of the family of cylinders at radii 
$\rs = 1 / \ell$, for all $\ell \in \mathbb{N}.$  
We conclude that each of these branches corresponds to the branch $R(B,\ell)$ of 
$2 \pi / \ell$--periodic undulary curves. 
Working in the reduced setting established in Section~\ref{Sec:Stability}, it turns out that 
eigenvalues associated with the linearized problem are not simple, 
hence we cannot directly apply the results of \cite{CR71}.
However, restricting attention to surfaces which are even (symmetric about the surface $[x = 0]$), 
we eliminate redundant eigenvalues,
similar to a method used by Escher and Matioc \cite{EM11}.
In this even function setting, we have simple eigenvalues and derive bifurcation
results, which we apply back to the full problem via a posteriori symmetries of equilibria. 

Using eigenvalue perturbation methods, we are also able to conclude nonlinear instability
of nontrivial unduloids, using the same techniques as in Section~\ref{Sec:Instability}.
We once more refer to Remark~\ref{remark-bifurcation} for more information.

In future work we plan to investigate well-posedness of ASD under weaker
regularity assumptions on the initial data. This will allow for a better understanding 
of global existence, and obstructions thereof.
In particular, we conjecture that solutions developing singularities will have to go through
a pinch-off. 

We also plan to consider non--axisymmetric surfaces. In particular, we plan to investigate 
the stability of cylinders under non-axisymmetric perturbations.

Other interesting questions involve the existence and nature of unstable families of perturbations
and reformulations of the problem to allow for different boundary conditions and immersed surfaces.
In particular, reformulating ASD in terms of parametrically defined curves in $\mathbb{R}^2$
would allow for consideration of immersed surfaces of revolution, 
a setting within which the branches of $2 \pi/\ell$--periodic nodary curves
would be added to the collection of equilibria. 
See Section~\ref{Sec:Equilibria} for a definition and graphs of nodary curves. 

\medskip

Throughout the paper we will use the following notation:
If $E$ and $F$ are arbitrary Banach spaces,
$\mathbb{B}_E(a,r)$ denotes the open ball in $E$ with center $a$ and radius $r>0$
and $\mathcal{L}(E,F)$ consists of all bounded linear
operators from $E$ into $F$. 
For $U\subset E$ an open set, we denote by $C^\omega(U,F)$ the
space of all real analytic mappings from $U$ into $F$.

\subsection{Maximal Regularity}\label{MaxReg}

We briefly introduce (continuous) maximal regularity, 
also called \emph{optimal regularity} in the literature. 
Maximal regularity has received a lot of attention in connection 
with parabolic partial differential equations and evolution laws, 
c.f. \cite{AM95, AM05, AN90, CS01, KPW10, LUN95, Pru03, Sim95}. 
Although maximal regularity can be developed in a 
more general setting, we will focus on the setting of \emph{continuous} maximal regularity and 
direct the interested reader to the references \cite{AM95, LUN95} for a general development of the theory.

Let $\mu \in (0,1], \, J := [0,T]$, for some $T > 0$, and let $E$ be a (real or complex) Banach space. 
Following the notation of \cite{CS01}, we define spaces of continuous functions on $\dot{J} := J \setminus \{ 0 \}$ 
with prescribed singularity at 0. Namely, define
\begin{equation}\label{Eqn:SingularContinuity}
\begin{aligned}
&BU\!C_{1 - \mu}(J,E) := \bigg\{ u \in C(\dot{J}, E): [t \mapsto t^{1 - \mu} u(t)] \in BU\!C(\dot{J},E) \; \text{and} \\ 
& \hspace{9em} \lim_{t \rightarrow 0^+} t^{1 - \mu} \| u(t) \|_E = 0 \bigg\}, \quad \mu \in (0,1)\\
&\| u \|_{B_{1 - \mu}} := \sup_{t \in J} t^{1 - \mu} \| u(t) \|_E,
\end{aligned}
\end{equation}
where $BU\!C$ denotes the space consisting of bounded, uniformly continuous functions. It is easy 
to verify that $BU\!C_{1 - \mu}(J,E)$ is a Banach space when equipped with the norm 
$\| \cdot \|_{B_{1 - \mu}}$. Moreover, we define the subspace
\[
BU\!C_{1 - \mu}^1(J,E) := \left\{ u \in C^1(\dot{J},E) : u, \dot{u} \in BU\!C_{1 - \mu}(J, E) \right\}, \quad \mu \in (0,1)
\]
and we set
\[
BU\!C_0 (J,E) := BU\!C(J,E) \qquad BU\!C^1_0(J,E) := BU\!C^1(J,E).
\]

Now, if $E_1$ and $E_0$ are a pair of Banach spaces such that $E_1$ is continuously embedded in $E_0$,
denoted $E_1 \hookrightarrow E_0$, we set
\[
\begin{split}
&\mathbb{E}_0(J) := BU\!C_{1 - \mu}(J,E_0), \qquad \mu \in (0,1],\\
&\mathbb{E}_1(J) := BU\!C^1_{1 - \mu}(J,E_0) \cap BU\!C_{1 - \mu}(J,E_1),
\end{split}
\] 
where $\mathbb{E}_1(J)$ is a Banach space with the norm 
\[
\| u \|_{\mathbb{E}_1(J)} := \sup_{t \in \dot{J}} t^{1 - \mu} \Big( \| \dot{u}(t) \|_{E_0} + \| u(t) \|_{E_1} \Big).
\]
It follows that the trace operator $\gamma: \mathbb{E}_1(J) \rightarrow E_0$, defined by 
$\gamma v := v(0)$, is well-defined and we denote by $\gamma \mathbb{E}_1$ the image of 
$\gamma$ in $E_0$, which is itself a Banach space when equipped with the norm
\[
\| x \|_{\gamma \mathbb{E}_1} := \inf \Big\{ \| v \|_{\mathbb{E}_1(J)}: v \in \mathbb{E}_1(J) \, \text{and} \, \gamma v = x \Big\}.
\]

For a bounded linear operator $B \in \mathcal{L}(E_1, E_0)$ which is 
closed as an operator on $E_0$, 
we say $\big( \mathbb{E}_0(J), \mathbb{E}_1(J) \big)$ 
is a \emph{pair of maximal regularity} for $B$ and write $B \in \mathcal{MR}_{\mu}(E_1, E_0)$, if 
\[
\left( \frac{d}{dt} + B, \, \gamma \right) \in \mathcal{L}_{isom}(\mathbb{E}_1(J), \mathbb{E}_0(J) \times \gamma \mathbb{E}_1),
\]
where $\mathcal{L}_{isom}$ denotes the space of bounded linear isomorphisms. In particular,  
$\big( \mathbb{E}_0(J), \mathbb{E}_1(J) \big)$ is a pair of maximal regularity for $B$ 
if and only if for every $(f, u_0) \in \mathbb{E}_0(J) \times \gamma \mathbb{E}_1$, there 
exists a unique solution $u \in \mathbb{E}_1(J)$ to the inhomogeneous Cauchy problem 
\[
\begin{cases}
\dot{u}(t) + Bu(t) = f(t), &t \in \dot{J},\\
u(0) = u_0.
\end{cases}
\]
Moreover, in the current setting, it follows that $\gamma \mathbb{E}_1 \, \dot{=} \, (E_0, E_1)_{\mu, \infty}^0$,
i.e. the trace space $\gamma \mathbb{E}_1$ is topologically equivalent to the noted continuous 
interpolation spaces of Da Prato and Grisvard, c.f. \cite{AM95, CS01, DPG79, LUN95}.

\section{Well-Posedness of \eqref{ASD}}\label{Sec:WellPosedness}
When considering the surface diffusion problem, 
the underlying Banach spaces $E_0$ and $E_1$ in the formulation of
maximal regularity 
will be spacial regularity classes which describe the properties 
of the profile functions $r(t)$. We proceed by defining these regularity classes.
We define the one-dimensional torus $\Tone := [-\pi, \pi]$, where the points $-\pi$ and
$\pi$ are identified, which is equipped with the topology generated by the metric
\[
d_{\Tone}(x,y) := \min \{|x - y|, 2 \pi - |x - y| \}, \qquad x, y \in \Tone.
\]  
There is a natural equivalence between functions defined on $\Tone$ and 2$\pi$-periodic
functions on $\mathbb{R}$ which preserves properties of (H\"older) continuity and differentiability. 
In particular, we will be working with the so-called periodic little-H\"older spaces $h^{\sigma}(\Tone),$
for $\sigma \in \mathbb{R}_+ \setminus \mathbb{Z}$. Definitions and basic properties of periodic little-H\"older
spaces, as well as details on the connection between spaces of functions on $\Tone$ and $2 \pi$-periodic
functions on $\mathbb{R}$ can be found in \cite{LeC11} and the references therein. For the readers convenience,
we provide a brief definition of $h^{\sigma}(\Tone)$ below.

For $k \in \mathbb{N}_0 := \mathbb{N} \cup \{0\}$, denote by $C^{k}(\Tone)$ the Banach space of 
$k$-times continuously differentiable functions $f: \Tone \rightarrow \mathbb{R}$, equipped with the norm 
\[
\| f \|_{C^k(\Tone)} := \sum_{j = 0}^k \| f^{(j)} \|_{C(\Tone)} := \sum_{j = 0}^k \left( \sup_{x \in \Tone} |f^{(j)}(x)| \right).
\]
Moreover, for $\alpha \in (0,1)$ and $k \in \mathbb{N}_0$, we define the space $C^{k + \alpha}(\Tone)$ 
to be those functions $f \in C^k(\Tone)$ such that the $\alpha$-H\"older seminorm 
\[
\big[ f^{(k)} \big]_{\alpha, \Tone} := \sup_{\substack{x, y \in \Tone\\ x \not= y}} \frac{|f^{(k)}(x) - f^{(k)}(y)|}{d_{\Tone}^{\alpha}(x,y)}
\]
is finite. It follows that $C^{k + \alpha}(\Tone)$ is a Banach space when equipped with the norm
\[
\| f \|_{C^{k + \alpha}(\Tone)} := \| f \|_{C^k(\Tone)} + [ f^{(k)} ]_{\alpha, \Tone}.
\]
Finally, we define the periodic little-H\"older space
\[
h^{k + \alpha}(\Tone) := \left\{ f \in C^{k + \alpha}(\Tone) : \lim_{\delta \rightarrow 0} \sup_{\substack{x, y \in \Tone\\ 0 < d_{\Tone}(x,y) < \delta}} \frac{|f^{(k)}(x) - f^{(k)}(y)|}{d_{\Tone}^{\alpha}(x,y)} = 0 \right\},
\]
for $k \in \mathbb{N}_0$ and $\alpha \in (0,1)$ which is a Banach algebra with 
pointwise multiplication of functions and equipped with the norm 
$\| \cdot \|_{h^{k + \alpha}} := \| \cdot \|_{C^{k + \alpha}(\Tone)}$ inherited from
$C^{k + \alpha}(\Tone)$. For equivalent definitions and more properties
of the periodic little-H\"older spaces, see \cite[Section 1]{LeC11}.

In order to make explicit the quasilinear structure of \eqref{ASD}, we reformulate the problem.
By expanding the governing equation we arrive at the formally equivalent problem
\begin{equation}\label{ASD2}
\begin{cases}
\partial_t r(t,x) + \big[\mathcal{A}(r(t))r(t)\big](x) = f(r(t,x)), &\text{$t > 0, \; x \in \Tone$},\\
r(0,x) = r_0(x), &\text{$x \in \Tone$},
\end{cases}
\end{equation}
where, for appropriately chosen functions $\rho$, 
\begin{equation}\label{Eqn:ADefined}
\mathcal{A}(\rho) := \frac{1}{(1 + \rho_x^2)^2} \; \partial_x^4 + 
\frac{2 \rho_x \big( 1 + \rho_x^2 - 5\rho \rho_{xx} \big)}{\rho \big( 1 + \rho_x^2 \big)^3} \; \partial_x^3
\end{equation}
is a fourth-order differential operator with variable coefficients over $\Tone$ and
\begin{equation}\label{Eqn:FDefined}
f(\rho) := \frac{\rho_x^2 - 1}{\rho^2 (1 + \rho_x^2)^2} \; \rho_{xx} + \frac{6 \rho_x^2 - 1}{\rho (1 + \rho_x^2)^3} \; \rho_{xx}^2 + 
\frac{3 - 15 \rho_x^2}{(1 + \rho_x^2)^4} \; \rho_{xx}^3 + \frac{\rho_x^2}{\rho^3 (1 + \rho_x^2)} 
\end{equation}
is a $\mathbb{R}$-valued function over $\Tone$.
Looking at these formal expressions, one can deduce several properties that the functions $\rho$ must satisfy in order
to get good mapping properties for $f$ and $\mathcal{A}$. In particular, we want to choose $\rho$ such that $\rho(x) \not= 0$ for 
all $x \in \Tone$, also we want that the spacial derivatives $\rho_x$ and $\rho_{xx}$ make sense and the products
$\rho^2$, $\rho^3$, $\rho \rho_x^2$, etc. have desired regularity properties. With these conditions in mind, we
proceed with our well-posedness result.

\subsection{Existence and Uniqueness of Solutions}\label{Section:Solutions}

We collect statements of well--posedness results and refer the reader to the appendix
for comments on their proof.
Fix $\alpha \in (0,1)$ and define the spaces of $\mathbb{R}$-valued little--H\"older continuous functions
\begin{equation}\label{Eqn:LittleHolderE}
E_0 := h^{\alpha}(\Tone), \quad E_1 := h^{4 + \alpha}(\Tone), \quad \text{and} \quad
E_{\mu} := (E_0, E_1)_{\mu, \infty}^0,
\end{equation}
where $( \cdot, \cdot)_{\mu, \infty}^0$, for $\mu \in (0,1)$, denotes the continuous interpolation
functor of Da Prato and Grisvard, c.f. \cite{DPG79} or \cite{AM95}. It is well-known that 
the little-H\"older spaces are stable under this interpolation method, in particular we know that
\[
E_{\mu} = h^{4 \mu + \alpha}(\Tone) \quad \text{(up to equivalent norms),} \qquad \text{for} \quad 4 \mu + \alpha \notin \mathbb{Z},
\]
c.f. \cite{LeC11, LUN95}. 
Further, let $V$ be the set of functions $r: \Tone \rightarrow \mathbb{R}$ 
such that $r(x) > 0$ for all $x \in \Tone$ and define $V_{\mu} := V \cap E_{\mu}$ for $\mu \in [0,1]$.
We note that $V_{\mu}$ is an open subset of $E_{\mu}$ for all $\mu \in [0,1]$.

Before we can properly state a result on maximal solutions, we need to introduce one more space of
functions from an interval $J \subset \mathbb{R}_+$ to a Banach space $E$, with prescribed singularity
at zero. Namely, if $J = [0, a)$ 
for $a > 0$, i.e. $J$ is a right-open interval containing 0, then we set
\begin{align*}
C_{1 - \mu}(J,E) &:= \{ u \in C(\dot{J},E): u \in BU\!C_{1 - \mu}([0,T],E), \quad T < \sup J \},\\
C^1_{1 - \mu}(J,E) &:= \{ u \in C^1(\dot{J},E): u, \dot{u} \in C_{1 - \mu}(J,E) \}, \qquad \mu \in (0,1],
\end{align*}
which we equip with the natural Fr\'echet topologies induced by 
$BU\!C_{1 - \mu}([0,T],E)$ and $BU\!C_{1 - \mu}^1([0,T],E)$, respectively.

We list some important properties of the mappings ${\mathcal A}$ and $f$, introduced in \eqref{Eqn:ADefined} and \eqref{Eqn:FDefined}.
\begin{lemma}\label{Lem:AandFReg} Let $\mu \in [1/2,1]$. Then 
\[
(\mathcal{A},f) \in C^{\omega} \bigg(V_{\mu},\; \mathcal{MR}_{\nu}(E_1,E_0) \times E_0 \bigg), \qquad \text{for} \quad \nu \in (0, 1],
\]
where $C^{\omega}$ denotes the space of real analytic mappings between Banach spaces.
\end{lemma}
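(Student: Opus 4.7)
\medskip
\noindent\textbf{Proof Plan.}
Three things need verification for $\rho \in V_\mu$: (i) $f(\rho) \in E_0$; (ii) $\mathcal{A}(\rho) \in \mathcal{MR}_\nu(E_1,E_0)$; (iii) real analytic dependence on $\rho$. The plan is to handle the pointwise regularity of the coefficients and of $f$ using the Banach algebra structure of the little-H\"older scale, to obtain analyticity from the fact that reciprocals of nonvanishing functions are analytic in Banach algebras of continuous functions, and then to invoke the continuous maximal regularity result of \cite{LeC11} to secure the generator property.

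The starting observation is that $\mu \geq 1/2$ gives the continuous embedding $E_\mu \hookrightarrow h^{2+\alpha}(\Tone)$, so $\rho, \rho_x \in h^{1+\alpha}(\Tone)$ and $\rho_{xx} \in h^{\alpha}(\Tone)$. Since $\rho \in V$ is bounded below by a positive constant and $1+\rho_x^2 \geq 1$ pointwise, both $1/\rho$ and $1/(1+\rho_x^2)$ remain in $h^{1+\alpha}(\Tone)$. Combined with the algebra structure of $h^{\alpha}(\Tone)$, direct inspection of \eqref{Eqn:ADefined} and \eqref{Eqn:FDefined} then shows that the coefficients
\[
a_4(\rho) := \frac{1}{(1+\rho_x^2)^2}, \qquad a_3(\rho) := \frac{2\rho_x(1+\rho_x^2 - 5\rho\rho_{xx})}{\rho(1+\rho_x^2)^3}
\]
lie in $h^{\alpha}(\Tone)$, that $f(\rho) \in h^{\alpha}(\Tone) = E_0$, and that the uniform lower bound $a_4(\rho) \geq (1+\|\rho_x\|_{\infty}^2)^{-2} > 0$ holds, giving uniform ellipticity of $\mathcal{A}(\rho)$.

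Analyticity is then essentially formal. Each $\partial_x^j$ acts as a bounded linear operator between appropriate little-H\"older spaces; multiplication is bilinear and continuous on $h^{\alpha}(\Tone)$; and the map $u \mapsto 1/u$ is real analytic on the open set of pointwise units in any Banach algebra of continuous functions, via a locally convergent Neumann-type series. Composing these operations, every rational expression in $\rho$ and its first two derivatives that appears in $a_3$, $a_4$, and $f$ depends analytically on $\rho \in V_\mu$ with values in $h^{\alpha}(\Tone)$. The linear map $(a_4,a_3) \mapsto a_4\partial_x^4 + a_3\partial_x^3$ is bounded from $h^{\alpha}(\Tone)^2$ into $\mathcal{L}(E_1,E_0)$, so $\rho \mapsto \mathcal{A}(\rho)$ is real analytic into $\mathcal{L}(E_1,E_0)$, and similarly $\rho \mapsto f(\rho)$ is real analytic into $E_0$.

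The main remaining step, and the one that carries the real content of the lemma, is identifying $\mathcal{A}(\rho)$ as an element of $\mathcal{MR}_\nu(E_1,E_0)$ for every admissible $\nu$. This is where the plan relies on \cite{LeC11}, which establishes continuous maximal regularity on $\Tone$ precisely for fourth-order uniformly elliptic differential operators with coefficients in $h^{\alpha}(\Tone)$; the verified coefficient regularity and ellipticity furnish the hypotheses. Independence of $\nu \in (0,1]$ within the little-H\"older maximal regularity scale is a standard abstract feature (cf.\ \cite{AM95,CS01}). Finally, $\mathcal{MR}_\nu(E_1,E_0)$ is an open subset of $\mathcal{L}(E_1,E_0)$, so analyticity of $\mathcal{A}$ into $\mathcal{L}(E_1,E_0)$ automatically upgrades to analyticity into $\mathcal{MR}_\nu(E_1,E_0)$, completing the argument.
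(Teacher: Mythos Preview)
Your proof is correct and follows essentially the same route as the paper's: both use $\mu\ge 1/2$ to place the coefficients in $h^\alpha(\Tone)$, invoke \cite{LeC11} for maximal regularity of the resulting uniformly elliptic fourth-order operator, obtain analyticity from the analyticity of reciprocals, derivatives, and products in the little-H\"older algebra, and then pass from $\mathcal{L}(E_1,E_0)$ to $\mathcal{MR}_\nu(E_1,E_0)$ via openness of the latter. The only nuance the paper adds that you do not mention explicitly is that \cite{LeC11} is stated for complex-valued little-H\"older spaces, so one must observe that maximal regularity restricts to the real subspaces; this is routine.
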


\begin{prop}[Existence and Uniqueness]
\label{Prop:Existence}
Fix $\alpha \in (0,1)$ and take $\mu \in [1/2, 1]$ so that $4 \mu + \alpha \notin \mathbb{Z}$. 
For each initial value $r_0 \in V_{\mu} := h^{4 \mu + \alpha}(\Tone) \cap [r > 0]$, 
there exists a unique maximal solution 
\[
r(\cdot, r_0) \in C^1_{1 - \mu}(J(r_0), h^{\alpha}(\Tone)) \cap C_{1 - \mu}(J(r_0), h^{4 + \alpha}(\Tone)),
\]
where $J(r_0) = [0, t^+(r_0)) \subseteq \mathbb{R}_+$ denotes the maximal interval of 
existence for initial data $r_0$. Further, it follows that 
\[
\mathcal{D} := \bigcup_{r_0 \in V_{\mu}} J(r_0) \times \{ r_0 \}
\]
is open in 
$\mathbb{R}_+ \times V_{\mu}$ and $\varphi:[(t,r_0) \mapsto r(t,r_0)]$ is an analytic
semiflow on $V_{\mu}$, i.e. using the notation $\varphi^t(r_0) := \varphi(t,r_0)$, the mapping 
$\varphi$ satisfies the conditions
\begin{equation}\label{Eqn:Semiflow}
\begin{split}
&\bullet \; \varphi \in C \big( \mathcal{D}, V_{\mu} \big)\\ 
&\bullet \; \varphi^0 = id_{V_{\mu}}\\
&\bullet \; \varphi^{s + t}(r_0) = \varphi^t \circ \varphi^s(r_0) \quad \text{for $0 \leq s < t^+(r_0)$ and $0 \leq t < t^+(\varphi^s(r_0))$}\\
&\bullet \; \varphi(t, \cdot) \in C^{\omega}(\mathcal{D}_t, V_{\mu}) \quad \text{for $t \in \mathbb{R}_+$ with $\mathcal{D}_t := \{ r \in V_{\mu}: (t,r) \in \mathcal{D} \} \not= \emptyset$.}
\end{split}
\end{equation}
\end{prop}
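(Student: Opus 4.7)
The plan is to recognize Proposition~\ref{Prop:Existence} as a direct application of the abstract quasilinear existence theorem of Cl\'ement--Simonett \cite{CS01}, in the version adapted to the continuous maximal regularity setting in \cite{LeC11}, taking as input the analyticity-plus-maximal-regularity statement supplied by Lemma~\ref{Lem:AandFReg}. A preliminary step is to confirm that \eqref{ASD2} is equivalent to the original formulation \eqref{ASD} on sufficiently regular profile functions; this amounts to expanding the two nested $x$-derivatives in $G(r)$ and regrouping terms by the order of the derivative falling on $r$, so as to isolate the quasilinear top-order piece $\mathcal{A}(r)r$ from the lower-order contribution $f(r)$.

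Three ingredients are needed to invoke the abstract theorem. First, the trace space of the maximal regularity pair $(\mathbb{E}_0(J), \mathbb{E}_1(J))$ built over $(E_0, E_1) = (h^{\alpha}(\Tone), h^{4+\alpha}(\Tone))$ coincides with $E_\mu = h^{4\mu+\alpha}(\Tone)$, since the continuous interpolation functor of Da Prato--Grisvard preserves the little-H\"older scale whenever $4\mu+\alpha \notin \mathbb{Z}$. Second, positivity is an open condition in $E_\mu \hookrightarrow C(\Tone)$, so $V_\mu$ is open in $E_\mu$. Third, Lemma~\ref{Lem:AandFReg} supplies real analyticity of $(\mathcal{A}, f) : V_\mu \to \mathcal{MR}_\nu(E_1, E_0) \times E_0$, which is exactly the hypothesis required by the CS01 framework.

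With these in place, CS01 produces, for each $r_0 \in V_\mu$, a unique local solution in the class $C^1_{1-\mu}([0,T], E_0) \cap C_{1-\mu}([0,T], E_1)$ on some short interval, together with continuous dependence on $r_0$ in the trace topology. Passage to the maximal interval $J(r_0) = [0, t^+(r_0))$ is routine: uniqueness allows concatenation of local solutions, and $t^+(r_0)$ is defined as the supremum of existence times. Openness of $\mathcal{D}$ and continuity of $\varphi$ on $\mathcal{D}$ then follow from a standard open-closed argument on the set of times $t < t^+(r_0)$ up to which nearby initial data still admit an extended solution, while the cocycle identity in \eqref{Eqn:Semiflow} is an immediate consequence of uniqueness.

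The technically most delicate point is the analyticity of $\varphi(t, \cdot)$ on $\mathcal{D}_t$ for fixed $t > 0$. I would handle this by a parameter-trick argument: fixing a reference solution $\brho$ through $r_0$ on $[0,t]$, one rewrites the equation for the difference $v := r - \brho$ as an abstract fixed-point problem in $\mathbb{E}_1([0,t])$ whose right-hand side depends analytically, via Lemma~\ref{Lem:AandFReg}, on the perturbation $h$ of the initial condition; the implicit function theorem in Banach spaces, applied in the maximal regularity norm, then delivers analytic dependence of $v$, and hence of $\varphi^t(r_0+h)$, on $h$. The genuinely nontrivial analytic work sits in Lemma~\ref{Lem:AandFReg} itself---in particular, in verifying that $\mathcal{A}(\rho)$ generates a maximal regularity pair uniformly for $\rho$ in the open set $V_\mu$---while Proposition~\ref{Prop:Existence} itself is a relatively mechanical translation of the abstract CS01/LeC11 machinery into the present concrete setting.
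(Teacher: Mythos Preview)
Your approach is essentially the paper's for $\mu \in [1/2,1)$: invoke Lemma~\ref{Lem:AandFReg} to feed the hypotheses of \cite[Theorems~4.1, 5.1, 6.1]{CS01}, which deliver existence, uniqueness, openness of $\mathcal{D}$, and the analytic semiflow all at once. Your implicit-function-theorem sketch for analytic dependence on initial data is exactly the mechanism behind \cite[Theorem~6.1]{CS01}, so there is no real difference there.

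The one place where the paper diverges from your outline is the endpoint $\mu = 1$. The Cl\'ement--Simonett semiflow results (Theorems~5.1 and~6.1 in \cite{CS01}) are formulated for the strictly weighted case $\mu \in (0,1)$, so at $\mu = 1$ the paper switches viewpoints: it treats \eqref{ASD} as a \emph{fully nonlinear} equation $\dot r = G(r)$ and invokes Angenent's framework \cite[Corollary~2.9]{AN90} to obtain the analytic semiflow on $V_1$. This requires an ingredient you do not mention, namely that the full Fr\'echet derivative $-DG(r)$ (not merely the quasilinear top-order part $\mathcal{A}(r)$) lies in $\mathcal{MR}_1(E_1,E_0)$ for every $r \in V_1$; the paper verifies this by computing $DG(r)$ explicitly and observing that it is again a uniformly elliptic fourth-order operator with $h^\alpha$ coefficients, so that \cite{LeC11} applies once more. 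Your parameter-trick argument would in principle also cover $\mu = 1$, but as written it leans on the CS01 black box, which does not directly apply there---so you should either flag the case distinction or note that the linearization $DG$, not just $\mathcal{A}$, must be shown to have maximal regularity.
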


The results in \cite{CS01} also give the following conditions for global solutions. 

\begin{prop}[Global Solutions]
Let $r_0 \in V_{\mu} := h^{4 \mu + \alpha}(\Tone) \cap [r > 0]$ 
for $\mu \in (1/2, 1]$, such that $4 \mu + \alpha \notin \mathbb{Z}$, 
and suppose there exists
$0 < M < \infty$ so that, for all $t \in J(r_0)$
\begin{align*}
&\bullet \; r(t, r_0)(x) \ge 1/M, \; \forall x \in \Tone, \qquad \text{and}\\
&\bullet \; \| r(t, r_0) \|_{h^{4 \mu + \alpha}(\Tone)} \le M,
\end{align*}
then it must hold that $t^+(r_0) = \infty$, so that $r(\cdot, r_0)$ is a global solution.
Conversely, if $r_0 \in V_{\mu}$ and $t^+(r_0) < \infty$, i.e. the solution breaks down in finite--time,
then one, or both, of the conditions stated must fail to hold.
\end{prop}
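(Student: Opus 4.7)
The plan is to argue by contradiction: assume $t^+(r_0) < \infty$ while both uniform bounds hold throughout $J(r_0)$, and construct an extension of $r(\cdot, r_0)$ beyond $t^+(r_0)$, contradicting maximality. The converse direction in the statement is then just the logical contrapositive. The key idea is to descend slightly in interpolation regularity in order to leverage a compact embedding; concretely, fix $\mu' \in [1/2, \mu)$ with $4\mu' + \alpha \notin \mathbb{Z}$ (for instance $\mu' = 1/2$, since $\alpha \in (0,1)$).

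Because the embedding $h^{4\mu + \alpha}(\Tone) \hookrightarrow h^{4\mu' + \alpha}(\Tone)$ is compact (a standard consequence of Arzel\`a--Ascoli, cf.~\cite{LeC11}), the uniform bound $\|r(t, r_0)\|_{E_\mu} \leq M$ makes the trajectory $\{r(t, r_0): t \in J(r_0)\}$ relatively compact in $E_{\mu'}$. Pick any sequence $t_n \nearrow t^+(r_0)$; after passing to a subsequence, $r(t_n, r_0) \to \bar{r}$ in $E_{\mu'}$. Uniform convergence on $\Tone$ together with $r(t_n, r_0) \geq 1/M$ yields $\bar{r}(x) \geq 1/M > 0$, so $\bar{r} \in V_{\mu'}$. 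By Proposition~\ref{Prop:Existence} applied at regularity $\mu'$, the semiflow domain $\mathcal{D}$ is open in $\mathbb{R}_+ \times V_{\mu'}$, so there exist $\tau > 0$ and an open neighborhood $U$ of $\bar{r}$ in $V_{\mu'}$ with $[0, \tau] \times U \subset \mathcal{D}$. Choose $n$ large enough that $r(t_n, r_0) \in U$ and $t^+(r_0) - t_n < \tau$. Since the $\mu$-solution $r(\cdot, r_0)$ is a fortiori a $\mu'$-solution (the weighted continuity classes satisfy $C_{1 - \mu}(J, E) \subset C_{1 - \mu'}(J, E)$ for $\mu > \mu'$, as $t^{1 - \mu'} = t^{\mu - \mu'} \, t^{1 - \mu}$ gains a factor vanishing at $0$), uniqueness in Proposition~\ref{Prop:Existence} identifies $r(t_n + s, r_0)$ with $\varphi(s, r(t_n, r_0))$ for $s \in [0, t^+(r_0) - t_n)$. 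But the right-hand side is defined on the larger interval $[0, \tau]$, so gluing yields a solution on $[0, t_n + \tau] \supsetneq [0, t^+(r_0))$, contradicting maximality of $t^+(r_0)$.

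The one residual point is that the extended curve must satisfy the regularity demanded by the $\mu$-setting, not merely the $\mu'$-setting. This is automatic from the parabolic smoothing contained in Proposition~\ref{Prop:Existence}: on each subinterval $[\varepsilon, t^+(r_0)) \subset \dot{J}(r_0)$, solutions are real analytic in time with values in $E_1 = h^{4+\alpha}(\Tone)$, hence in particular in $E_\mu$. The main (essentially the only) technical ingredient is thus the compactness of the little-H\"older embedding invoked at the outset; everything else is the standard continuation theory for quasilinear parabolic equations in the continuous maximal regularity framework of \cite{CS01}.
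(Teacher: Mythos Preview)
Your argument is correct and is precisely the standard continuation argument from \cite[Theorem~5.1]{CS01}, which is all the paper invokes for this proposition; no independent proof is given in the paper itself. The descent to a lower interpolation index $\mu'<\mu$ in order to convert the uniform $E_\mu$-bound into relative compactness, followed by a restart of the semiflow near the limit point, is exactly the mechanism underlying the cited result.
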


We can also state the following result regarding analyticity of the maximal
solutions $r(\cdot, r_0)$ in both space and time. 

\begin{prop}[Regularity of Solutions]
\label{Prop:SolutionRegularity}
Under the same assumptions as in Proposition~\ref{Prop:Existence},
it follows that
\begin{equation}
r(\cdot, r_0) \in C^\omega((0, t^{+}(r_0)) \times \Tone) \qquad \text{for all} \quad r_0 \in V_{\mu}, \quad \mu \in [1/2, 1].
\end{equation}
\end{prop}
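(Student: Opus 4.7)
I will prove this by the parameter trick of Angenent and Masuda, which deduces joint space-time real analyticity from the analytic quasilinear structure guaranteed by Lemma~\ref{Lem:AandFReg} together with maximal regularity. The plan is to introduce two real parameters---a time-rescaling $\lambda$ near $1$ and a moving-frame spatial translation $\xi$ near $0$---that enter the equation analytically while leaving the initial datum untouched; one then applies the analytic implicit function theorem and reads off analyticity of $r$ by a point-evaluation and an analytic change of variables.

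Fix $r_0 \in V_\mu$ and $t_0 \in (0, t^+(r_0))$, and choose $\delta > 0$ with $[t_0, t_0 + 2\delta] \subset J(r_0)$. On $J_\delta := [0, \delta]$ and for $(\lambda, \xi) \in U \subset \R^2$ a neighborhood of $(1, 0)$, consider the parametrized problem
\begin{equation*}
\partial_t \tilde r + \xi \, \partial_x \tilde r + \lambda \mathcal{A}(\tilde r) \tilde r = \lambda f(\tilde r), \qquad \tilde r(0, \cdot) = r(t_0, \cdot).
\end{equation*}
A direct computation, using the translation invariance of ASD in both $t$ and $x$ together with the locality of the differential operators $\mathcal{A}$ and $f$, shows that $(t, x) \mapsto r(\lambda t + t_0, x - \xi t)$ solves this system. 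Define
\begin{equation*}
F(\lambda, \xi, \tilde r) := \bigl( \partial_t \tilde r + \xi \, \partial_x \tilde r + \lambda \mathcal{A}(\tilde r) \tilde r - \lambda f(\tilde r), \ \tilde r(0) - r(t_0, \cdot) \bigr)
\end{equation*}
on $U \times \mathbb{V}(J_\delta)$ with values in $\mathbb{E}_0(J_\delta) \times E_\mu$, where $\mathbb{V}(J_\delta)$ is the open subset of $\mathbb{E}_1(J_\delta)$ consisting of paths with values in $V_\mu$. By Lemma~\ref{Lem:AandFReg}, the map $F$ is real analytic. Its partial derivative with respect to $\tilde r$ at $(1, 0, r(t_0 + \cdot, \cdot))$ has the form $v \mapsto (\partial_t v + B(t) v, v(0))$ for a fourth-order linear operator $B(t)$ with the same principal part as $\mathcal{A}(r(t_0 + t, \cdot))$ and lower-order coefficients depending continuously on $t$; standard perturbation arguments built on the maximal regularity at $t = 0$ supplied by Lemma~\ref{Lem:AandFReg} show, after possibly shrinking $\delta$, that this partial derivative is a topological linear isomorphism onto $\mathbb{E}_0(J_\delta) \times E_\mu$.

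The analytic implicit function theorem then yields a real-analytic map $(\lambda, \xi) \mapsto \tilde r(\lambda, \xi) \in \mathbb{E}_1(J_\delta)$ solving $F = 0$, and the uniqueness statement in Proposition~\ref{Prop:Existence} forces $\tilde r(\lambda, \xi)(t, x) = r(\lambda t + t_0, x - \xi t)$. For a fixed $(t, x) \in (0, \delta] \times \Tone$, composition with the continuous point-evaluation functional on $\mathbb{E}_1(J_\delta)$ shows that $(\lambda, \xi) \mapsto r(\lambda t + t_0, x - \xi t)$ is real analytic. Since the substitution $(\tau, y) := (\lambda t + t_0, x - \xi t)$ is, for fixed $t > 0$, an analytic local diffeomorphism near $(\lambda, \xi) = (1, 0)$ onto a neighborhood of $(t + t_0, x)$, it follows that $r$ is jointly real analytic at $(t + t_0, x)$. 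As $t_0 \in (0, t^+(r_0))$ and $t \in (0, \delta]$ are arbitrary, we obtain $r \in C^\omega((0, t^+(r_0)) \times \Tone)$. The main technical obstacle is the maximal regularity of the linearization $D_{\tilde r} F(1, 0, r(t_0 + \cdot, \cdot))$ in the presence of time-dependent principal coefficients and the first-order transport perturbation $\xi \, \partial_x$; this is handled by standard perturbation arguments within the continuous maximal regularity framework of \cite{CS01}.
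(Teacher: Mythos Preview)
Your proof is correct and follows essentially the same route as the paper: the Angenent--Masuda parameter trick with a time-dilation parameter and a spatial-translation parameter entering the equation analytically, followed by the analytic implicit function theorem and point evaluation. The paper's version differs only cosmetically: it starts at $t=0$ with the original datum $r_0$ and works on all of $[0,t_1]$, setting $r_{\lambda,a}(t):=T_{ta}\,r((1+\lambda)t)$ with $(\lambda,a)$ near $(0,0)$, whereas you shift the base point to $t_0>0$ and work on a short interval $[0,\delta]$ with $(\lambda,\xi)$ near $(1,0)$; the sign and offset conventions differ but the mechanism is identical, and for the isomorphism step the paper simply invokes \cite[Theorem~6.1]{CS01} where you sketch the perturbation argument.

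One small remark: the clause ``after possibly shrinking $\delta$'' is unnecessary and slightly complicates your final coverage sentence. In the continuous maximal regularity framework the linearization $(\partial_t + B(\cdot),\gamma)$ is an isomorphism on the full interval $J_\delta$ without shrinking (this is exactly the content of \cite[Theorem~6.1]{CS01}, which the paper cites at this point). If you retain the shrinking, then ``$t_0$ and $t$ arbitrary'' does not by itself cover all of $(0,t^+(r_0))$, since $\delta$ depends on $t_0$; you would need a one-line continuation argument. Dropping the shrinking makes the coverage immediate.
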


\begin{proof}
Here we rely on an idea that goes back to Masuda \cite{Mas80} and Angenent \cite{AN90,An90b} to 
introduce parameters and use the implicit function theorem to obtain regularity results for 
solutions, see also \cite{ES96}.
The technical details are included in the appendix.
\end{proof}
\begin{remark}
The preceding results can be slightly weakened to allow for arbitrary
values of $\mu \in (1/2, 1],$ i.e. without eliminating the possibility 
that $4 \mu + \alpha \in \mathbb{Z}$, by taking initial data from the 
continuous interpolation spaces $(E_0, E_1)_{\mu, \infty}^0$, which 
coincide with the Zygmund spaces over $\Tone$.
\end{remark}

\section{Characterizing The Equilibria of ASD}\label{Sec:Equilibria}

We begin our analysis of the long-time behavior of solutions by characterizing
and describing the equilibria of \eqref{ASD}. For this characterization, we make use of a well-known,
strict Lyapunov functional for the surface diffusion flow, namely the surface area functional, and
a characterization of surfaces of revolution with prescribed mean curvature, as presented by Kenmotsu
\cite{KEN80}.

Recalling the operator $G$, as expressed by \eqref{Eqn:GDefined} and taking it to be 
defined on $V_1 \subset h^{4 + \alpha}(\Tone)$, one will see that the set of equilibria of \eqref{ASD} 
coincides with the null set of $G$. Although, from the well-posedness
results of the previous section, we know that we can consider \eqref{ASD} with initial
conditions in $h^{2 + \alpha}(\Tone)$, upon which the operator $G$ is not defined, 
one immediately sees that all equilibria must be in $h^{4 + \alpha}(\Tone)$
(in fact, by Proposition~\ref{Prop:SolutionRegularity}, we can conclude that equilibria 
are in $C^{\omega}(\Tone)$). More specifically, 
if we define equilibria to be those elements $\bar{r} \in V_{1/2} = V \cap h^{2 + \alpha}(\Tone)$, 
such that the maximal solution $r(\cdot, \bar{r})$ satisfies 
\[
r(t, \bar{r}) = \bar{r}, \qquad t > 0,
\] 
then it follows immediately that $\bar{r} \in h^{4 + \alpha}(\Tone)$ and $G(\bar{r}) = 0$.
Now, we proceed by characterizing the elements of the null set of $G$.

Consider the functional
\[
S(r) := \int_{\Tone} r(x) \sqrt{1 + r^2_x(x)} dx,
\]
which corresponds to the surface area of $\Gamma(r)$.
If $r = r(\cdot, r_0)$ is a solution to \eqref{ASD} on the interval $J(r_0)$, 
then (suppressing the variable of integration) 
\begin{align*}
\partial_t S(r(t)) &= \int_{\Tone} \left[ \sqrt{1 + r^2_x(t)} + \frac{r(t) r_{x}(t)}{\sqrt{1 + r^2_x(t)}} \; \partial_x \right] G(r(t)) \; dx\\
&= \int_{\Tone} \partial_x \left( \frac{r(t)}{\sqrt{1 + r^2_x(t)}} \; \partial_x \mathcal{H}(r(t)) \right) \, \mathcal{H}(r(t)) \; dx\\
&= - \int_{\Tone} \frac{r(t)}{\sqrt{1 + r^2_x(t)}} \left( \partial_x \mathcal{H}(r(t)) \right)^2 \; dx, \qquad \qquad t \in J(r_0) \setminus \{ 0 \},
\end{align*}
where we use integration by parts twice and eliminate boundary terms because of periodicity.
Notice that the expression is non-positive for all times $t \in J(r_0) \setminus \{ 0 \}$.  

If $\bar{r}$ is an equilibrium of \eqref{ASD} it follows that $\partial_x \mathcal{H}(\bar{r})$ 
is identically zero on $\Tone$. 
Meanwhile, by definition of the operator $G$, $G(\bar{r}) = 0$ whenever 
$\partial_x \mathcal{H}(\bar{r}) = 0$. Hence, we conclude that $S(r)$ is a strict 
Lyapunov functional for \eqref{ASD}, as claimed, and we also see that the equilibria of 
\eqref{ASD} are exactly those functions $\bar{r} \in h^{4 + \alpha}(\Tone)$ for which 
the mean curvature function $\mathcal{H}(\bar{r})$ is constant on $\Tone$. 

The axisymmetric surfaces with constant mean curvature have been 
characterized explicitly by Kenmotsu in \cite{KEN80}. All equilibria 
of \eqref{ASD} are so-called \emph{undulary} curves, and the \emph{unduloid} 
surfaces, which are generated by the undulary curves by revolution about the axis of symmetry, 
are stationary solutions of the original surface diffusion problem \eqref{SD}.
\\
\goodbreak
\begin{thm}[Delaunay \cite{DE41} and Kenmotsu \cite{KEN80}]\label{Thm:Equilibria}
Any complete surface of revolution with constant mean curvature $\mathcal{H}$ is either a sphere,
a catenoid, or a surface whose profile curve is given (up to translation along the axis of symmetry)
by the parametric expression, parametrized by the arc-length parameter $s \in \mathbb{R}$,
\begin{equation}\label{Eqn:Unduloids}
R(s; \mathcal{H}, B) := \Bigg( \int_{\pi / 2 \mathcal{H}}^s \frac{1 + B \sin ( \mathcal{H} t)}{\sqrt{1 + B^2 + 2 B \sin ( \mathcal{H} t)}} \; dt \, ,
\frac{\sqrt{1 + B^2 + 2 B \sin ( \mathcal{H} s)}}{ \, |\mathcal{H}|} \Bigg).
\end{equation}
\end{thm}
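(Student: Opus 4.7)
The plan is to reduce the constant-mean-curvature condition to an ODE for the profile curve, find a first integral, and integrate explicitly. Parametrize the profile curve as $(x(s), r(s))$ by arc length so that $x'(s)^2 + r'(s)^2 = 1$, and introduce the tangent angle $\psi$ via $x'(s) = \cos\psi$, $r'(s) = \sin\psi$. A short computation using the formulas for $\kappa_1$ and $\kappa_2$ given in Section~\ref{Sec:IntroASD} shows that $\kappa_1 = \cos\psi/r$ and $\kappa_2 = -\psi'(s)$, so the CMC condition $\kappa_1+\kappa_2 = \mathcal H$ (with $\mathcal H$ constant) becomes the first-order ODE system
\begin{equation*}
x' = \cos\psi,\qquad r' = \sin\psi,\qquad \psi' = \frac{\cos\psi}{r} - \mathcal H .
\end{equation*}

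First I would handle the case $\mathcal H = 0$ separately: the system reduces to the classical minimal surface of revolution problem whose complete solutions are the catenoids (and degenerate planes). For $\mathcal H \neq 0$, the main step is to produce a first integral. Translational invariance of the system along the axis suggests the conserved quantity $r\cos\psi - \tfrac{\mathcal H}{2}r^2$; a direct differentiation using the ODE above gives
\begin{equation*}
\frac{d}{ds}\Bigl(r\cos\psi - \tfrac{\mathcal H}{2}r^2\Bigr) = \sin\psi\cos\psi - r\sin\psi\Bigl(\tfrac{\cos\psi}{r}-\mathcal H\Bigr) - \mathcal H r\sin\psi = 0,
\end{equation*}
so this quantity equals a constant, which I parametrize as $(1-B^2)/(2\mathcal H)$ for a parameter $B \in \mathbb R$.

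Next I would solve for the profile explicitly. Substituting $\cos\psi = (1-B^2)/(2\mathcal H r)+\mathcal H r/2$ into $r'^2 = 1 - \cos^2\psi$ gives, after algebraic simplification, the identity
\begin{equation*}
\mathcal H^2 r^2 = 1+B^2 + 2B\sin(\mathcal H s + s_0)
\end{equation*}
for an appropriate phase $s_0$; this is precisely the second component of $R(s;\mathcal H,B)$ after a shift. Integrating $x'(s) = \cos\psi = (1+B\sin(\mathcal H s))/(\mathcal H r(s))$ then yields the integral expression for the first component of $R$ in \eqref{Eqn:Unduloids}. The boundary cases $B = 0$ (cylinder), $|B|=1$ (spheres, where $r$ touches zero), $0<|B|<1$ (unduloids), and $|B|>1$ (nodoids) are read off directly from the formula for $r(s)$.

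The main obstacle is identifying the correct first integral; once that is in hand, everything is separation of variables and elementary manipulation. A secondary issue is completeness: one must verify that any maximal solution of the ODE system extends for all arc length $s \in \mathbb R$, which follows because the conserved quantity bounds $r$ above and below (when $|B|<1$) or allows $r$ to pass smoothly through its extrema (when $|B|\geq 1$), and $\psi$ is then determined by the first integral up to sign, giving a globally defined smooth profile curve. The asserted trichotomy then follows by combining the $\mathcal H = 0$ case (catenoid), the degenerate $|B|=1$ case (sphere), and the generic parametric family.
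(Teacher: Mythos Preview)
The paper does not prove this theorem at all; it is stated with attribution to Delaunay and Kenmotsu and used as a classical input, with no argument supplied. Your sketch is essentially the standard derivation (close in spirit to Kenmotsu's): pass to arc-length and the tangent angle, identify the first integral $r\cos\psi - \tfrac{\mathcal H}{2}r^2$, and integrate. The outline is correct.

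Two small points worth tightening. First, the passage from the first integral to the explicit form $\mathcal H^2 r^2 = 1+B^2+2B\sin(\mathcal H s + s_0)$ is not ``algebraic simplification'' but the solution of the first-order ODE $r'^2 = 1 - \bigl((1-B^2)/(2\mathcal H r) + \mathcal H r/2\bigr)^2$; it is cleanest to simply verify that the claimed $r(s)$ satisfies both the arc-length constraint and the first integral, which you effectively do when you check $x'=(1+B\sin(\mathcal H s))/(\mathcal H r)$. Second, you invoke the graph-based curvature formulas from Section~\ref{Sec:IntroASD}, which presuppose $\cos\psi>0$; this fails along nodoids, so strictly speaking one should derive $\kappa_1=\cos\psi/r$ and $\kappa_2=-\psi'$ directly from the arc-length parametrization of a general profile curve rather than by specializing the graph formulas. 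Neither issue affects the correctness of the argument.
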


\medskip

\begin{rem}\label{Rem:Equilibria}
We can immediately draw several conclusions from Theorem~\ref{Thm:Equilibria} and characterize the equilibria
of \eqref{ASD}. We use the notation $R(\mathcal{H}, B)$ to denote the curve in $\mathbb{R}^2$ with 
parametric expression $R(\cdot \, ; \mathcal{H}, B)$.
\begin{enumerate}

\item Although the curves $R(\mathcal{H}, B)$ are well-defined for arbitrary 
values $B \in \mathbb{R}$ and $\mathcal{H} \not= 0$, it is not difficult to see that,
up to translations along the $x$--axis, we may restrict our attention to values $\mathcal{H} > 0$ and
$B \geq 0$, c.f. \cite[Section~2]{KEN80}. However, in the sequel we will consider the unduloids in the
setting of even functions on $\Tone$, for which we will benefit by allowing $B < 0$. 

\item When $|B| = 1$, $R(\mathcal{H}, B)$ corresponds to a family of spheres controlled by the parameter $\mathcal{H}$.
The spheres are a well-known family of stable equilibria for the surface diffusion flow,
c.f. \cite{EMS98, Wh12}, however their profile curves are outside of our current setting because they fail to be continuously 
differentiable functions on all of $\Tone$. Moreover, we note that the spheres represented by $R(\mathcal{H}, \pm 1)$
are in fact a {\em connected} family of spheres, or a \emph{chain of pearls} (see Figure~\ref{Fig:Spheres})\footnote{All 
figures were generated with GNU Octave, 
version 3.4.3, copyright 2011 John W. Eaton, and GNUPLOT, version 4.4 patchlevel 3, copyright 2010 Thomas Williams,
Colin Kelley.}, for which even general 
techniques for \eqref{SD} break down, as the manifold is singular at the points of intersection.
These families of connected spheres may be interesting objects to 
investigate in a weaker formulation of ASD, but they fall outside of the current setting.

\item Catenoids, or more precisely the generating catenary curves (which are essentially just the 
hyperbolic cosine, up to scaling), 
fail to satisfy periodic boundary conditions, c.f. Figure~\ref{Fig:Spheres}.

\item In case $|B| > 1$, the curve $R(\mathcal{H}, B)$ is called a \emph{nodary} (see Figure~\ref{Fig:Nodoids}),
which cannot be realized as the graph of a function over the $x$-axis and hence falls outside the current setting. 
A reformulation of \eqref{ASD} to allow for immersed surfaces would permit nodary curves as equilibria.
Such an extension of the current setting may prove beneficial to the investigation of pinch--off,
as it may likely be easier to handle concerns regarding concentration of curvature for solutions near
nodary curves, rather than embedded undularies.

\item For values $0 \leq |B| < 1$, $R(\mathcal{H}, B)$ is the family of \emph{undulary} curves,
which generate the \emph{unduloid} surfaces. The undulary curves are representable as graphs of 
functions over the $x$-axis, which are strictly positive for $B$ in the given range (see Figure~\ref{Fig:Unduloids}). 
In fact, the 
case $B = 0$ corresponds to the cylinder of radius $1 / \mathcal{H}$. Hence, by Theorem~\ref{Thm:Equilibria}
above, we conclude that {\bfseries all equilibria of \eqref{ASD} fall into the family of undulary curves.}

\item Notice that the curve $R(\mathcal{H},B)$ 
is always periodic in both the parameter $s$ and the spacial variable $x$. 
In order to ensure that the curve satisfies the $2 \pi$-periodic boundary
conditions enforced in \eqref{ASD} (which we emphasize is a condition regarding periodicity over
the variable $x$ and not the arc-length parameter $s$), 
we must impose further conditions on the parameters $\mathcal{H}$ 
and $B$; here we avoid $B = 0$ because the curve $R(\mathcal{H}, 0)$ 
trivially satisfies periodic boundary conditions.
In particular, for $B \not= 0$, if $\mathcal{H}$ and $B$ satisfy the relationship
\begin{equation}\label{Eqn:HandBRelation}
\frac{\pi \, \mathcal{H}}{k} = \int_{\pi/2}^{3 \pi/2} \frac{1 + B \sin t}{\sqrt{1 + B^2 + 2B \sin t}} \; dt \,,
\end{equation}
then the curve $R(\mathcal{H}, B)$ is $2 \pi / k$ periodic in the $x$ variable,
for $k \in \mathbb{N}$.
In the sequel, we will use the notation $R(B,k)$ to denote the $2 \pi/k$ periodic
undulary curve with free parameter $-1 < B < 1$ and parameter $\mathcal{H} = \mathcal{H}(B)$ 
fixed according to \eqref{Eqn:HandBRelation}.

\item 
The role of the parameters $B$ and $k$ is clearly seen in the context of 
Delaunay's construction. By rolling an ellipse with eccentricity
$B$ along the $x$--axis, the path traced out by one focus is an undulary curve.
Here $B < 0$ corresponds to a reassignment of major and minor axes in the associated ellipse.
Further, it is clear that the ellipses are restricted to those with  
circumference $2 \pi / k$, to match periodic boundary conditions.

\end{enumerate}
\end{rem}

\begin{figure}[ht]
\centering
\mbox{\subfigure{\includegraphics[width=2.4in,height=1.5in,clip=true,trim=.5in 2.5in .5in 2.5in]{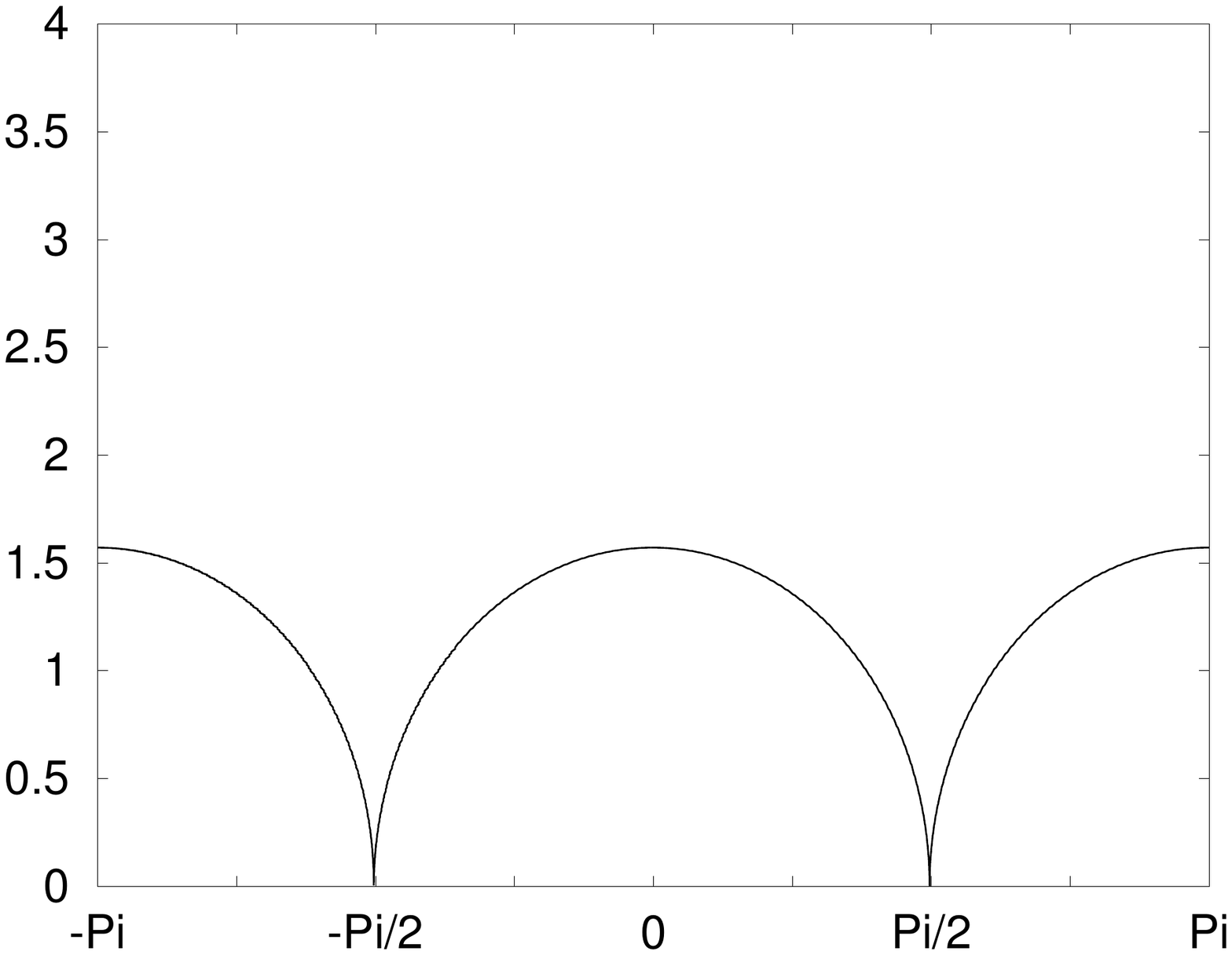}}\quad
\subfigure{\includegraphics[width=2.4in,height=1.5in,clip=true,trim=.5in 2.5in .5in 2.5in]{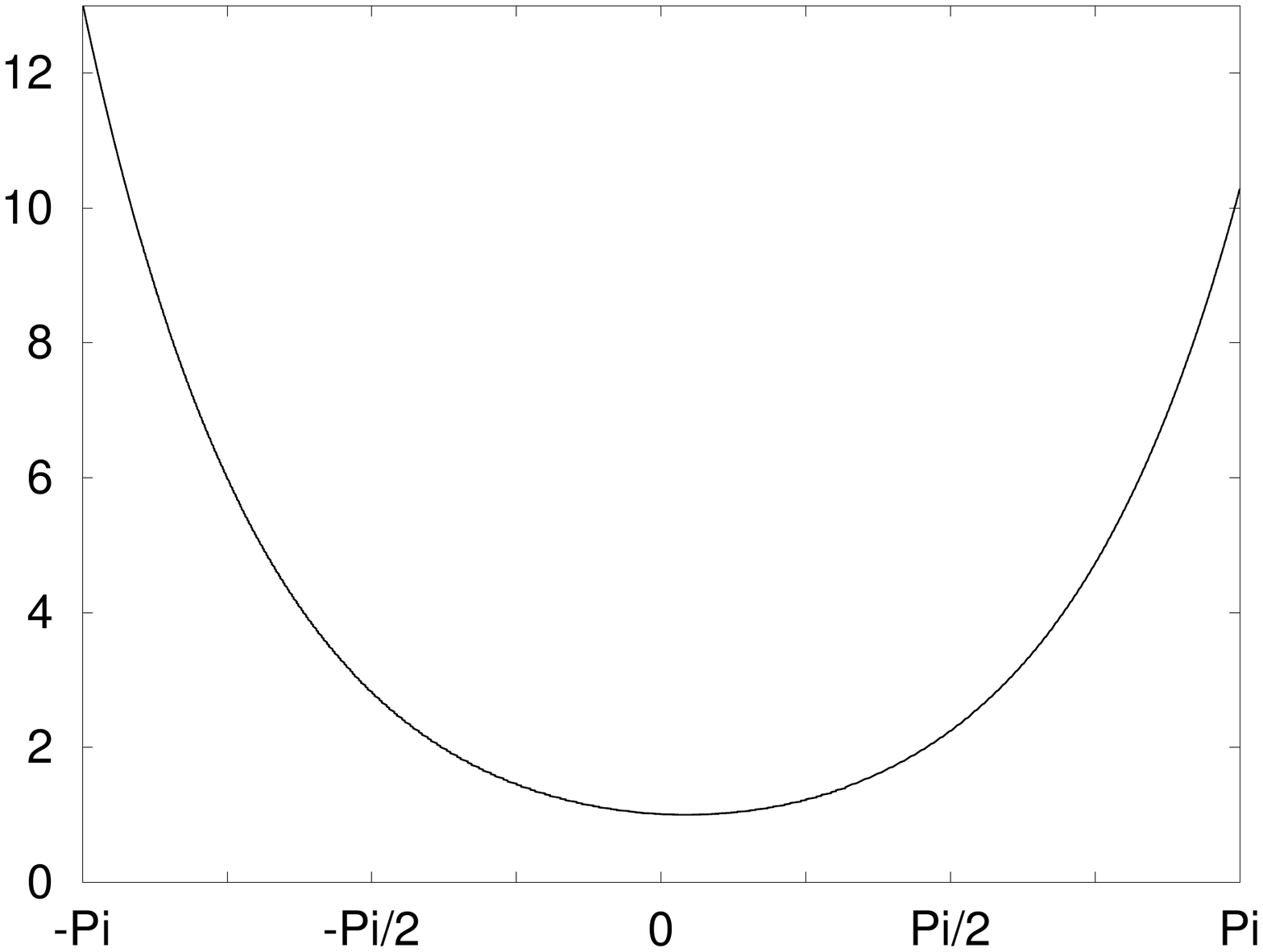} }}
\caption[Periodic Family of Spheres and a Catenary Curve]{Profile curves for a family of spheres and a catenoid, respectively.} \label{Fig:Spheres}
\end{figure}


\begin{figure}[ht]
\centering
\mbox{\subfigure{\includegraphics[width=2.4in,height=1.5in,clip=true,trim=.5in 2.5in .5in 2.5in]{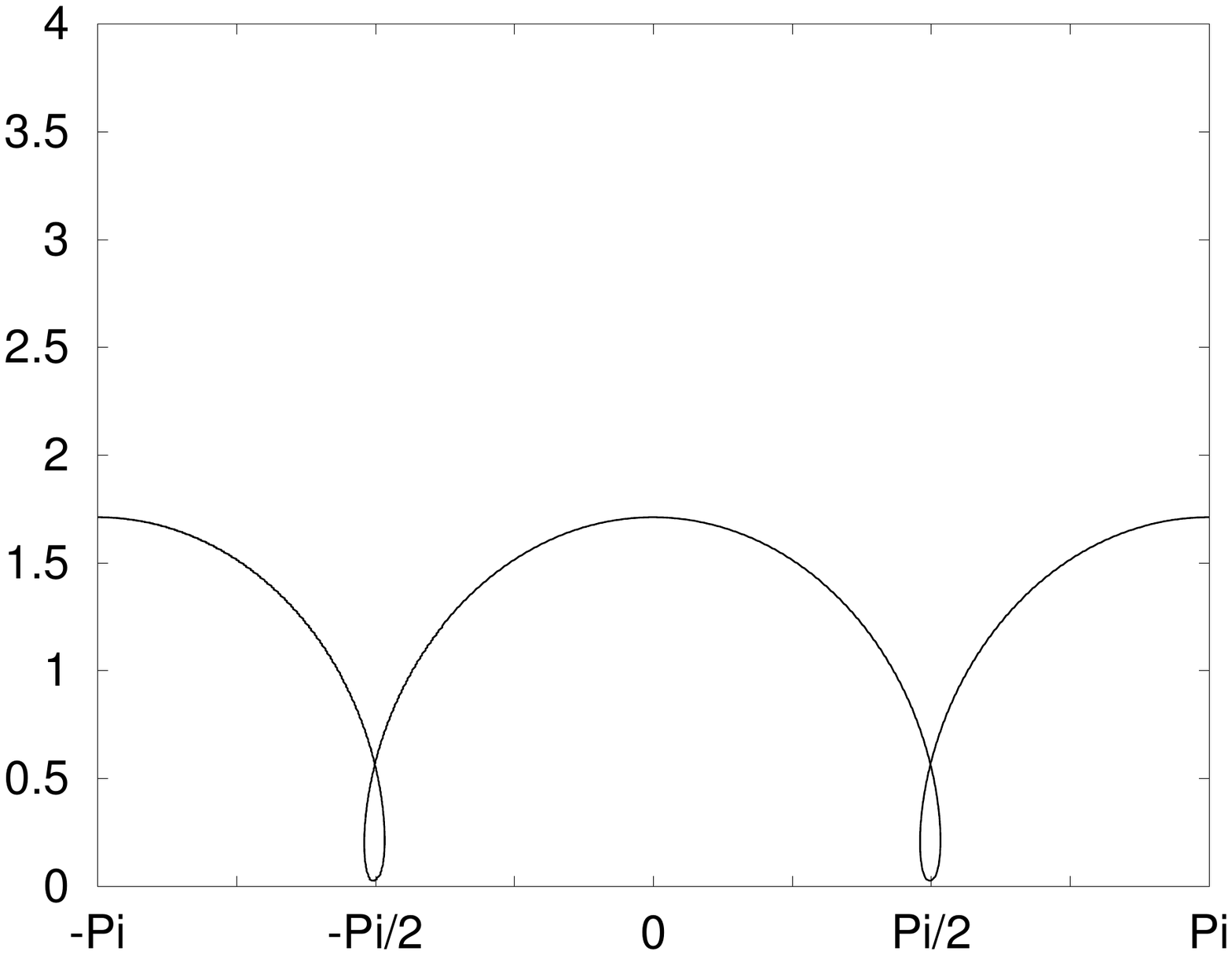}}\quad
\subfigure{\includegraphics[width=2.4in,height=1.5in,clip=true,trim=.5in 2.5in .5in 2.5in]{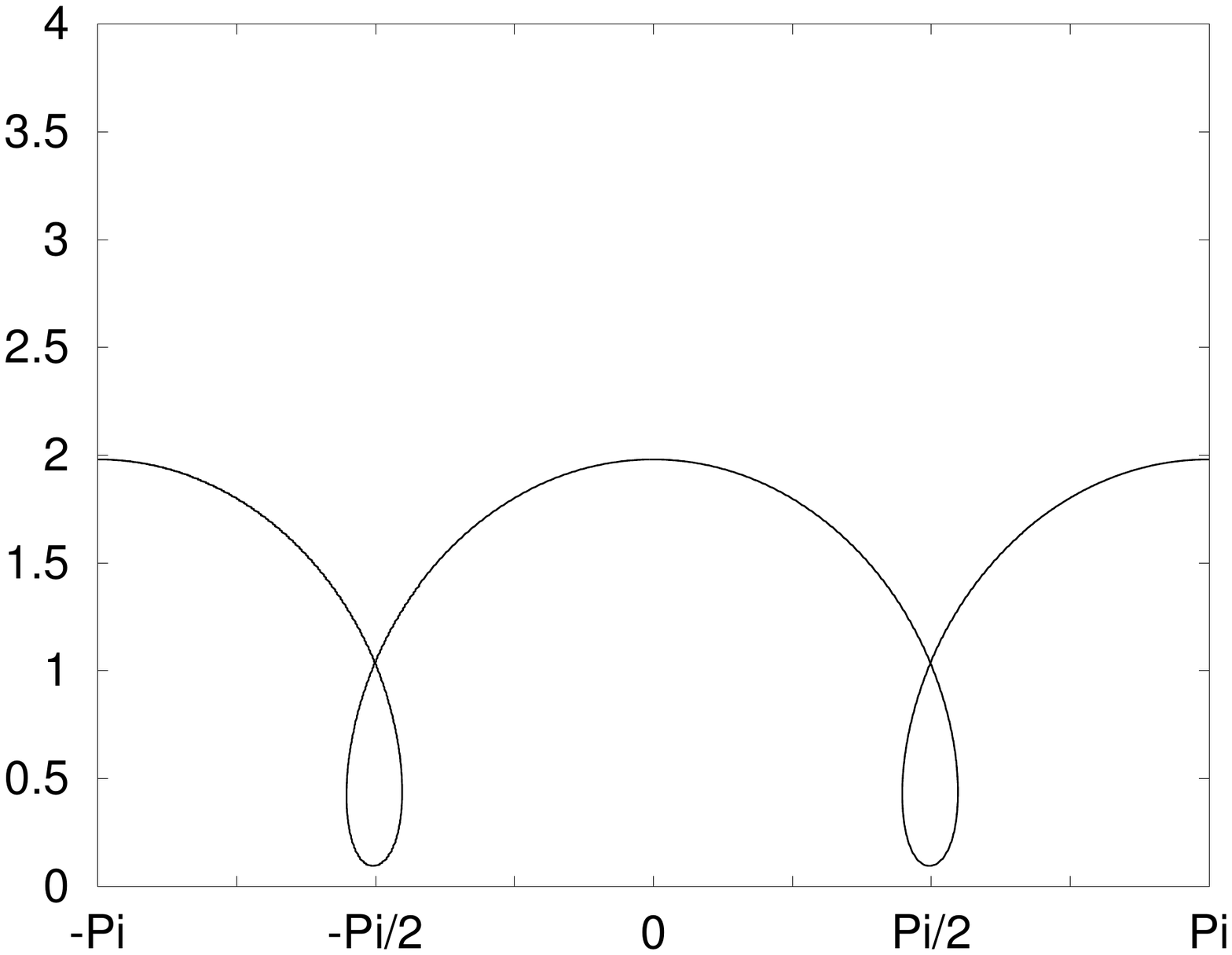} }}
\caption[Nodary Curves]{$\pi$ periodic nodary curves with $B = 1.03$ and $B = 1.1$, respectively.} \label{Fig:Nodoids}
\end{figure}

\pagebreak

\begin{figure}[ht]
\centering
\mbox{\subfigure{\includegraphics[width=2.4in,height=1.5in,clip=true,trim=.5in 2.5in .5in 2.5in]{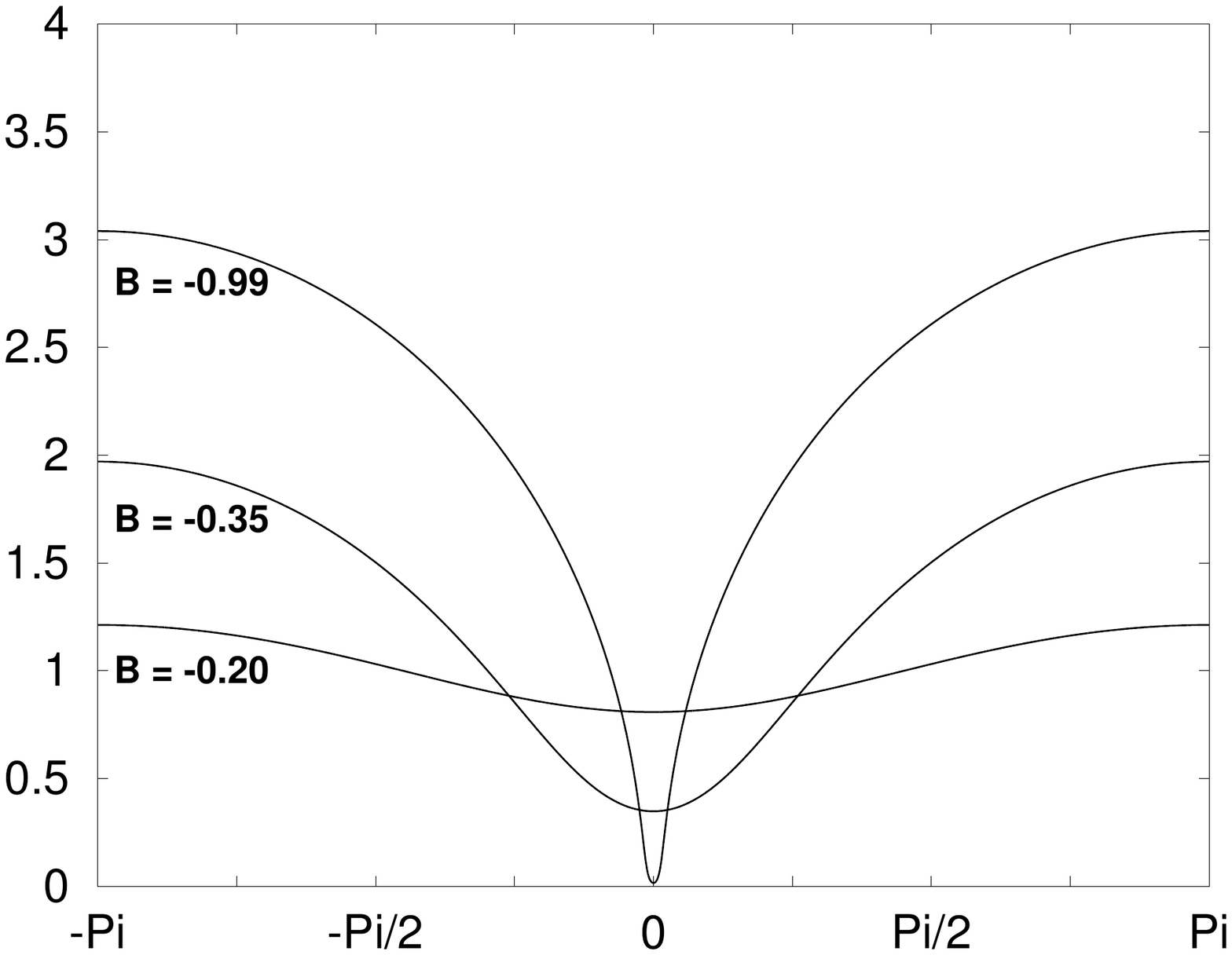}}\quad
\subfigure{\includegraphics[width=2.4in,height=1.5in,clip=true,trim=.5in 2.5in .5in 2.5in]{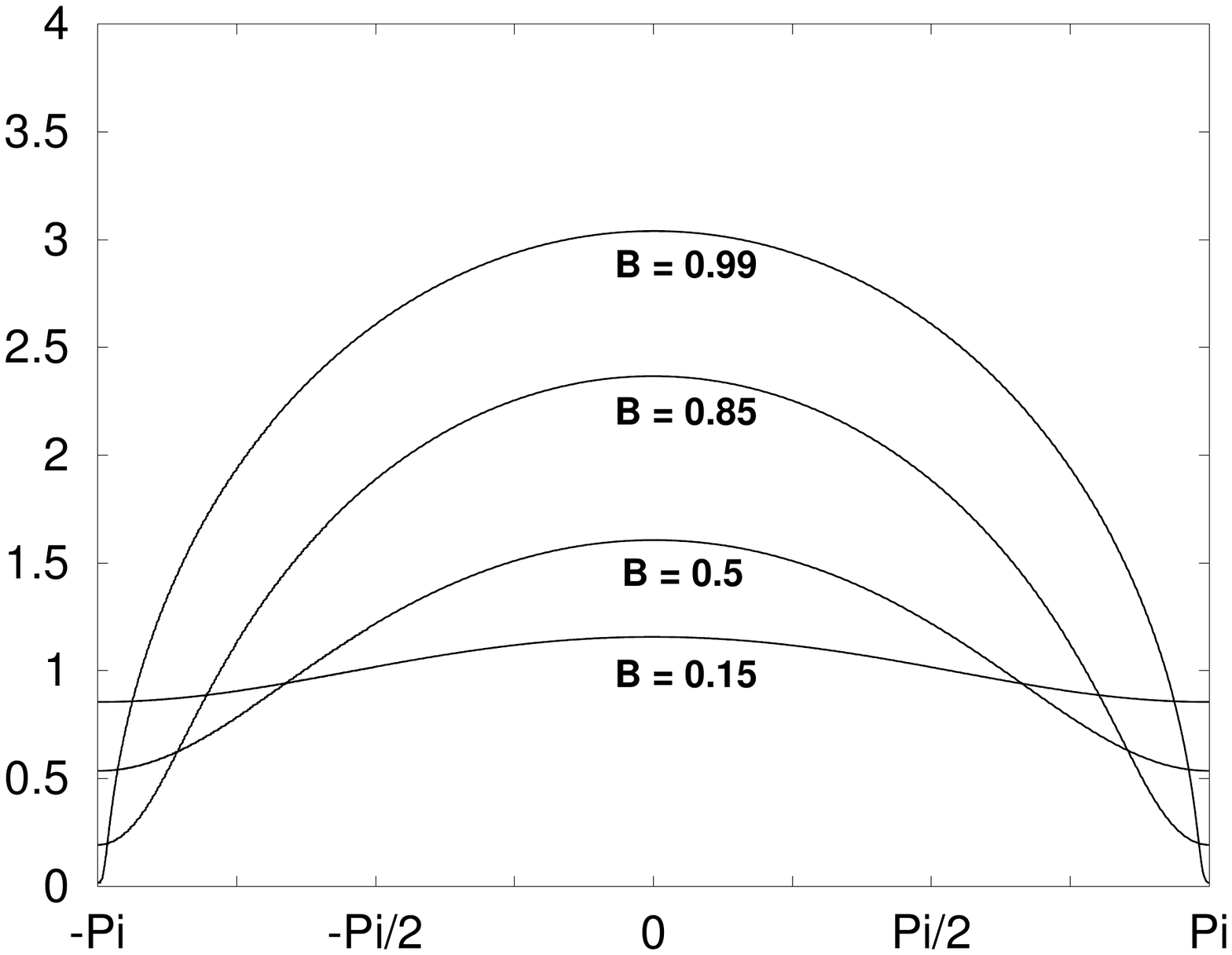} }}
\caption[Undulary Curves]{Families of $2 \pi$ periodic undulary curves with selected parameter values from $B = -.99$
to $B = 0.99$, as indicated.} \label{Fig:Unduloids}
\end{figure}

\section{Stability Of Cylinders With Large Radius}\label{Sec:Stability}

As seen above, the constant function $r(x) \equiv \rs$, for $\rs > 0$, is an
equilibrium of \eqref{ASD2}. Moreover, the constant function $r(x) \equiv \rs$ is associated
to the cylinder $\Gamma(\rs)$ with radius $\rs$, which is a stationary solution of the original 
surface diffusion problem \eqref{SD}. In this section, we establish tools for and carry out the 
investigation of nonlinear stability for these equilibria. 


\subsection{Preliminary Analysis and Definitions}\label{Section:Definitions}

Throughout this analysis, we consider an arbitrary $\rs > 0$ and $\sigma \in \mathbb{R}_+ \setminus \mathbb{Z}$,
unless otherwise stated. Focusing on the properties of solutions near $\rs$, we shift our equations, 
including the \emph{shifted} operator 
\begin{align*}
G_{\star}(\rho) := G(\rho + \rs) &= \frac{1}{\rho + \rs} \; \partial_x \left[ \frac{\rho + \rs}{\sqrt{1 + \rho_x^2}} \; \partial_x  
\mathcal{H}(\rho + \rs) \right],
\end{align*}
which maps $\rho \in E_1 \cap \, U_{\star}$ to $E_0$, where we consider $\rho = r - \rs$, 
and is in the regularity class
$C^{\omega}$ by Lemma~\ref{Lem:AandFReg}; here we take $U_{\star} := V - \rs := \{ \rho - \rs: \rho \in V \}$.
Now we consider the surface diffusion problem shifted by $\rs$,
\begin{equation}\label{ASDrs}
\begin{cases}
\rho_t(t,x) = G_{\star}(\rho(t,x)), &\text{$x \in \Tone, \; t > 0$},\\
\rho(0,x) = \rho_0(x), &\text{$x \in \Tone$},
\end{cases}
\end{equation}
where $\rho_0 := r_0 - \rs$. We say that 
\[
\rho = \rho(\cdot, \rho_0) \in C^1(\dot{J}, E_0) \cap C(\dot{J}, E_1) \cap C(J, E_{\mu} \cap U_{\star})
\]
is a solution to \eqref{ASDrs}, with initial data $\rho_0 \in E_{\mu} \cap U_{\star}$, 
on the interval $J \subset \mathbb{R}_+$ if $\rho$ satisfies \eqref{ASDrs} pointwise, for $t > 0$, and 
$\rho(0) = \rho_0$. We investigate the properties of $G_{\star}$ around 0 
in order to gain information about the stability of $\rs$ in \eqref{ASD}.

Define the functional 
\[
F_{\star}(\rho) = F_{\star}(\rho; \rs) := \int_{\Tone} \big( \rho(x) + \rs \big)^2 dx,
\]
which corresponds to the volume enclosed by the surface $\Gamma(\rho + \rs)$. 
It follows from the analyticity of multiplication and integration
on little-H\"older spaces that $F_{\star}$ is of class $C^{\omega}$ from $h^{\sigma}(\Tone)$ to $\mathbb{R}, \;
\sigma \in \mathbb{R}_+ \setminus \mathbb{Z}$. 
The Fr\'echet derivative of $F_{\star}$ is
\begin{equation}\label{Eqn:DF}
DF_{\star}(\rho): \left[ h \longmapsto 2 \int_{\Tone} \big( \rho(x) + \rs \big) h(x) dx \right] \in \mathcal{L} \left( h^{\sigma}(\Tone), \mathbb{R} \right), \qquad \rho \in h^{\sigma}(\Tone, \mathbb{R}).
\end{equation}
Moreover, it holds that $F_{\star}(\rho)$ is conserved along solutions to \eqref{ASDrs}.
Indeed, if $\rho = \rho(\cdot, \rho_0)$ is a solution to \eqref{ASDrs}, then
\begin{align*}
\frac{1}{2}\frac{d}{dt} F_{\star}(\rho(t)) = \int_{\Tone} \big( \rho(t) + \rs \big) \rho_t(t) dx
&= \int_{\Tone} \partial_x \left[ \frac{\big( \rho(t) + \rs \big)}{\sqrt{1 + \rho^2_x(t)}} \; \partial_x \mathcal{H}(\rho(t) + \rs) \right] dx 
= 0, 
\end{align*}
for $t \in J(\rho_0) \setminus \{ 0 \},$ where the last equality holds by periodicity. Thus,
conservation of $F_{\star}$ along the solution $\rho$ follows by continuity of $F_{\star}$ and 
convergence of $\rho$ to the initial data $\rho_0$ in $E_{\mu}$.
From these properties, it follows that 
\begin{equation}\label{Eqn:Manifolds}
\mathcal{M}^{\sigma}_{\eta} := \Big\{ \rho \in h^{\sigma}(\Tone): F_{\star}(\rho) = F_{\star}(\eta) \Big\}, \qquad \eta \in \mathbb{R}, \; \sigma \in \mathbb{R}_+ \setminus \mathbb{Z}
\end{equation}
is a family of invariant level sets for \eqref{ASDrs}. The following techniques are
motivated by results of Prokert \cite{PRO97} and Vondenhoff \cite{VON08}, whereby 
one can take advantage of invariant manifolds in order to derive stability results.

First, we introduce the mapping
\[
P_0 \rho := \rho - \frac{1}{2 \pi}\int_{\Tone} \rho(x) dx,
\]
which defines a projection on $h^{\sigma}(\Tone)$.
We denote by $h^{\sigma}_0(\Tone)$ the image $P_0 \big( h^{\sigma}(\Tone) \big),$ which 
exactly coincides with the zero-mean functions on $\Tone$ in the regularity class
$h^{\sigma}(\Tone)$, and we have the topological decomposition
\[
h^{\sigma}(\Tone) = h^{\sigma}_0(\Tone) \oplus (1 - P_0)\big( h^{\sigma}(\Tone) \big) 
\cong h^{\sigma}_0(\Tone) \oplus \mathbb{R} \,.
\]
In what follows, we equate the constant function 
$[\eta(x) \equiv \eta] \in (1 - P_0) \big( h^{\sigma}(\Tone) \big)$
with the value $\eta \in \mathbb{R},$ and we denote each simply as $\eta$.

Consider the operator
\[
\Phi(\rho, \tilde{\rho}, \eta) := \Big( P_0 \rho - \tilde{\rho}, \; F_{\star}(\rho) - F_{\star}(\eta) \Big),
\]
which maps $h^{\sigma}(\Tone) \times h^{\sigma}_0(\Tone) \times \mathbb{R}$ to 
$h^{\sigma}_0(\Tone) \times \mathbb{R}$ and is of class $C^{\omega}$, by regularity of the mappings $F_{\star}$ and $P_0$.
Notice that $\Phi(0,0,0) = (0,0)$ and, using \eqref{Eqn:DF},
\begin{equation}\label{Eqn:D1Phi}
D_1 \Phi (0,0,0) = 
\Big( P_0, \; 4 \pi \rs (1 - P_0) \Big) \in \mathcal{L}_{isom}(h^{\sigma}(\Tone), h^{\sigma}_0(\Tone) \times \mathbb{R}),
\end{equation} 
i.e. the Fr\'echet derivative of $\Phi$ with respect to the first variable, at the origin, is a linear isomorphism.
Hence, it follows from the implicit function theorem
that there exist neighborhoods $(0,0) \in U = U_0 \times U_1 \subset h^{\sigma}_0(\Tone) \times \mathbb{R}$ 
and $0 \in U_2 \subset h^{\sigma}(\Tone)$
and a $C^{\omega}$ function $\psi: U \rightarrow U_2$ such that, for all $(\rho, \tilde{\rho}, \eta) \in U_2 \times U$,
\[
\Phi(\rho, \tilde{\rho}, \eta) = ( 0, 0 ) \qquad \text{if and only if} \qquad \rho = \psi(\tilde{\rho}, \eta).
\]

\medskip

\begin{rem}\label{Rem:PsiProperties}
We can immediately state the following properties of $\psi$, which follow directly from its definition 
and elucidate the relationship between $P_0$ and $\psi$.
\begin{enumerate}

\item $P_0 \psi(\trho, \eta) = \trho$ for all $(\trho, \eta) \in U$.

\item Given $\rho \in \psi(U) \cap \mathcal{M}^{\sigma}_{\eta}$, it follows that $\psi(P_0 \rho, \eta) = \rho \, .$ 

\item $\psi (0, \eta) = \eta$, for $\eta \in U_1$.
This and the preceding remark follow from the fact that $F_{\star}(\eta)$
is injective when restricted to $\eta \in (-\rs, \infty) \subset \mathbb{R}$.

\item It follows from the identity $\Phi( \psi(\trho, \eta), \trho, \eta) = (0,0)$ and differentiating with
respect to $\trho$ that $D_1 \Phi(\psi(0,\eta), 0, \eta) D_1 \psi(0,\eta) h - (h, 0) = (0,0)$. From this 
observation, and the fact that $D_1 \Phi(\eta, 0, \eta) = (P_0, 4 \pi (\rs + \eta)(1 - P_0))$, it follows
that 
\[
D_1 \psi(0,\eta)h = h, \qquad h \in h^{\sigma}_0(\Tone), \; \eta \in U_1.
\]

\item $\psi(U_0, \eta) \subset \mathcal{M}^{\sigma}_{\eta}$ for $\eta \in U_1$. Hence, 
$\psi(\cdot, \eta)$ can be taken as a (local) parametrization of $\mathcal{M}^{\sigma}_{\eta} \, .$ Moreover,
from the preceding remark and the bijectivity of $\psi(\cdot, \eta)$ from $U_0$ to $\mathcal{M}^{\sigma}_{\eta} \cap U_2$,
we can see that $\mathcal{M}^{\sigma}_{\eta} \cap U_2$ is 
a Banach manifold over $h^{\sigma}_0(\Tone)$ anchored at the point $\eta \in \mathbb{R} \, .$ 

\item For $(\trho, \eta) \in U$, we have the representation
\[
\psi(\trho, \eta) = \Big(P_0 + (1 - P_0) \Big) \psi(\trho, \eta) = \trho + \frac{1}{2 \pi} \int_{\Tone} \psi(\trho, \eta)(x) dx,
\]
and so we can see that $\mathcal{M}^{\sigma}_{\eta} \cap U_2$ can be realized (locally) as the graph of  
a $\mathbb{R}$-valued analytic function over the zero-mean functions $\trho \in h^{\sigma}_0(\Tone)$.  

\item Although $\psi(\cdot, \eta)$ depends upon the parameter $\sigma$, a priori, it follows easily from the 
preceding representation that
\[
\psi(\cdot, \eta): U_0 \cap h^{\tilde{\sigma}}_0(\Tone) \rightarrow h^{\tilde{\sigma}}(\Tone), \quad \tilde{\sigma} \in \mathbb{R}_+ \setminus \mathbb{Z},
\]
so that $\psi$ preserves the spacial regularity of functions regardless of the regularity parameter $\sigma$
with which $\psi$ was constructed. However, notice that the neighborhood $U_0$ will remain intrinsically linked
with the parameter which was used to construct $\psi$.

\end{enumerate}
\end{rem}

With the established invariance and local structure of the sets $\mathcal{M}_{\eta}^{\sigma}$, it follows
that the dynamics governing solutions to \eqref{ASD} reside in the tangent space to the manifold 
$\mathcal{M}_{\eta}^{\sigma} \cap U_2$. Hence, if we reduce \eqref{ASD} to a local system {\bf on} 
$\mathcal{M}_{\eta}^{\sigma} \cap U_2$, then we will have captured all of the dynamics of the problem. 
Remarks~\ref{Rem:PsiProperties}(d)
is the first observation toward this reduced formulation. In fact, one can make use of the properties
established in Remarks~\ref{Rem:PsiProperties} to prove the following, even more general, result
regarding the properties of the the tangent vectors to $\mathcal{M}_{\eta}^{\sigma}$. Although we
use other tools to connect the reduced problem \eqref{PASD} below with the full problem \eqref{ASD}, 
this remark provides good intuition into the nature of these manifolds.

\medskip

\begin{remark}\label{Rem:Tangents}
Given $(\trho, \eta) \in U$ it follows that $D_1 \psi(\trho, \eta) \circ P_0 = id_{T_{\psi(\trho, \eta)}\mathcal{M}^{\sigma}_{\eta}}$, where 
$T_{\rho}\mathcal{M}^{\sigma}_{\eta}$ denotes the tangent space to the manifold $\mathcal{M}^{\sigma}_{\eta}$ at the point $\rho$.
\end{remark}


\subsection{The Reduced Problem}\label{Section:ReducedProblem}
Fix $\alpha \in (0, 1)$ and we denote the spaces
\[
F_0 := h^{\alpha}_0(\Tone), \quad F_1 := h^{4 + \alpha}_0(\Tone), \quad \text{and} \quad
F_{\mu} := (F_0, F_1)_{\mu, \infty}^0, \quad \mu \in (0,1),
\]
so that $F_{\mu} = P_0 E_{\mu}$ for $\mu \in [0,1]$. Define the operator
\[
\mathcal{G}_{\star}(\trho,\eta) = \mathcal{G}_{\star}(\trho,\eta; \rs) := P_0 \, G \big( \psi(\trho, \eta) + \rs \big),
\]
which is defined for all $(\trho, \eta) \in U \subset F_0 \times \mathbb{R}$ 
with $\trho \in U_0 \cap F_1$.

Now we consider the \emph{reduced} problem for the zero-mean functions 
\begin{equation}\label{PASD}
\begin{cases}
\trho_t(t,x) = \mathcal{G}_{\star}(\trho(t,x), \eta), &\text{$t > 0, \; x \in \Tone$},\\
\trho(0,x) = \trho_0(x), &\text{$x \in \Tone$},
\end{cases}
\end{equation}
where $\trho_0 := P_0 r_0 = P_0(r_0 - \rs)$.
One will note that we should insist on $\psi(\trho, \eta)(x) > - \rs$ for all $x \in \Tone$ in order to 
guarantee that $G(\psi(\trho, \eta) + \rs)$ is well-defined. However, we can assume, without loss of
generality, that the neighborhood $U$ is chosen small enough to ensure this property holds for all $(\trho, \eta) \in U$.

\medskip

\begin{rem}
Throughout most of the analysis that follows, we will treat the parameter $\eta$ as a free parameter, although it 
has a very specific interpretation in relation to \eqref{ASD2}. If one is given initial data
$r_0$ close to $\rs$, then the parameter $\eta$ is chosen so that 
\[
F_{\star}(\eta) = F_{\star}(r_0 - \rs) \, .
\]
\begin{enumerate}

	\item Essentially, this parameter allows for the possibility that the volume enclosed by the surface
$\Gamma(r_0)$ differs from that of the cylinder $\Gamma(\rs)$, thereby allowing us to handle
non-volume-preserving perturbations $r_0$ of the cylinder $\rs$. 

	\item From a more general viewpoint, one can see that the family $\{\mathcal{M}^{\sigma}_{\eta} \cap \psi(U): \eta \in U_1 \}$ 
forms a dimension 1 foliation of a neighborhood of the 
positive real axis $\mathbb{R}_+ \subset h^{\sigma}(\Tone)$ and the parameter $\eta$ separates the leaves of the foliation.

\end{enumerate}
\end{rem}

For $\mu \in (0, 1]$ and closed intervals $J \subseteq \mathbb{R}_+$ with $0 \in J$, define the spaces
\begin{align*}
\mathbb{E}_0(J) &:= BU\!C_{1 - \mu}(J, E_0),\\
\mathbb{E}_1(J) &:= BU\!C_{1 - \mu}^1(J, E_0) \cap BU\!C_{1 - \mu}(J, E_1),
\end{align*}
and 
\begin{align*}
\mathbb{F}_{0}(J) &:= BU\!C_{1 - \mu}(J, F_0),\\
\mathbb{F}_{1}(J) &:= BU\!C_{1 - \mu}^1(J, F_0) \cap BU\!C_{1 - \mu}(J, F_1),
\end{align*}
within which we will discuss solutions to the shifted problem \eqref{ASDrs} and 
the reduced problem \eqref{PASD}, respectively. 

In order to connect these two problems, we will make use of the \emph{lifting} map $\psi$, 
defined in the previous section. To ensure that $\psi$ is well-defined on $\mathbb{F}_1(J)$,
we must restrict our attention to functions which map into an appropriate neighborhood 
$U_0 \subset F_0$ of 0. In particular, we assume that $U_0$ is given 
so that 
\[
\psi(\cdot, \eta): U_0 \subset F_0 \rightarrow E_0, \quad \eta \in U_1,
\]
is in the regularity class $C^{\omega}$ and, without loss of generality, we assume that
$U_0$ is given sufficiently small so that $\psi$ and the derivative $D_1 \psi$ 
are bounded on $U = U_0 \times U_1$. More precisely, $U_0$ is chosen 
sufficiently small so that there exists a constant $N > 0$ for which the inequalities
\begin{equation}\label{Eqn:PsiBound}
\| \psi(\trho, \eta) \|_{E_0} \leq N \quad \text{and} \quad \| D_1 \psi(\trho, \eta) \|_{\mathcal{L}(F_0, E_0)} \leq N 
\end{equation}
hold for all $(\trho, \eta) \in U = U_0 \times U_1.$

\begin{lemma}\label{Lem:PsiLifts}
Fix $\eta \in U_1$ and $J := [0, T]$ for $T > 0$. Then
\[
\psi(\cdot, \eta) : \mathbb{F}_1(J) \cap C(J, U_0) \longrightarrow \mathbb{E}_1(J), \qquad \text{with} \quad \psi(\trho, \eta)(t) := \psi(\trho(t), \eta).
\]
Moreover, if $\trho_0 \in F_{\mu}$ and $\trho = \trho(\cdot, \trho_0) \in \mathbb{F}_1(J) \cap C(J, U_0)$ 
is a solution to \eqref{PASD}, for some $\mu \in [1/2, 1]$, then $\rho := \psi(\trho, \eta)$ 
is the unique solution on the interval $J$ to \eqref{ASDrs}, with initial data 
$\rho_0 := \psi(\trho_0, \eta) \in E_{\mu}$.  
\end{lemma}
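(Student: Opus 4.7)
The plan is to prove the lemma in two parts. The first part (the mapping property) I would handle by exploiting the explicit affine representation of $\psi$ from Remark~\ref{Rem:PsiProperties}(f); the second part (that $\rho:=\psi(\tilde{\rho},\eta)$ solves \eqref{ASDrs}) I would handle by the chain rule together with the tangency identity from Remark~\ref{Rem:Tangents}. Uniqueness will come immediately from the uniqueness part of Proposition~\ref{Prop:Existence} applied to the shifted problem.

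For the mapping property, using the decomposition
\[
\psi(\tilde{\rho},\eta) \;=\; \tilde{\rho} \;+\; c(\tilde{\rho},\eta), \qquad c(\tilde{\rho},\eta):=\frac{1}{2\pi}\int_{\Tone}\psi(\tilde{\rho},\eta)(x)\,dx\in\mathbb{R},
\]
the spatial regularity claim $\psi(\tilde{\rho}(t),\eta)\in E_1$ is immediate from Remark~\ref{Rem:PsiProperties}(g), and the $E_1$-norm splits as $\|\tilde{\rho}(t)\|_{F_1}+|c(\tilde{\rho}(t),\eta)|$. Multiplying by $t^{1-\mu}$, the first term inherits the $BU\!C_{1-\mu}(J,F_1)$ behavior of $\tilde{\rho}$ by hypothesis; the scalar $c$ is uniformly bounded in view of \eqref{Eqn:PsiBound} (since the sup-norm is dominated by $\|\cdot\|_{E_0}$), so $t^{1-\mu}|c|$ tends to $0$ when $\mu<1$ and is bounded when $\mu=1$, yielding $\psi(\tilde{\rho},\eta)\in BU\!C_{1-\mu}(J,E_1)$. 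For the time derivative, the chain rule gives
\[
\partial_t\psi(\tilde{\rho}(t),\eta)\;=\;D_1\psi(\tilde{\rho}(t),\eta)\,\tilde{\rho}_t(t),
\]
and the uniform bound $\|D_1\psi\|_{\mathcal{L}(F_0,E_0)}\leq N$ from \eqref{Eqn:PsiBound} transfers the $BU\!C_{1-\mu}(J,F_0)$ control of $\tilde{\rho}_t$ to $\partial_t\psi(\tilde{\rho},\eta)\in BU\!C_{1-\mu}(J,E_0)$. Uniform continuity in $t$ follows from the uniform continuity of $\tilde{\rho}$ (resp.\ $\tilde{\rho}_t$) combined with the local Lipschitz continuity of $\psi$ (resp.\ $D_1\psi$).

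For the second claim, differentiating $\rho=\psi(\tilde{\rho},\eta)$ in $t$ and substituting the reduced equation $\tilde{\rho}_t=\mathcal{G}_{\star}(\tilde{\rho},\eta)=P_0\,G_{\star}(\psi(\tilde{\rho},\eta))$ yields
\[
\rho_t\;=\;D_1\psi(\tilde{\rho},\eta)\bigl[P_0\,G_{\star}(\rho)\bigr].
\]
Remark~\ref{Rem:Tangents} asserts that $D_1\psi(\tilde{\rho},\eta)\circ P_0$ acts as the identity on $T_{\rho}\mathcal{M}^{\sigma}_{\eta}$, so it suffices to verify that $G_{\star}(\rho)$ is tangent to $\mathcal{M}^{\sigma}_{\eta}$ at $\rho$. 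Using \eqref{Eqn:DF} and the divergence form of $G$, this reduces to
\[
DF_{\star}(\rho)\,G_{\star}(\rho)\;=\;2\int_{\Tone}\partial_x\!\left[\frac{\rho+\rs}{\sqrt{1+\rho_x^2}}\,\partial_x\mathcal{H}(\rho+\rs)\right]dx\;=\;0,
\]
which holds by periodicity---this is precisely the computation already used in Section~\ref{Section:Definitions} to show that $F_{\star}$ is conserved along solutions. Hence $\rho_t=G_{\star}(\rho)$; the initial condition $\rho(0)=\psi(\tilde{\rho}_0,\eta)=\rho_0$ is built in, and $\rho_0\in E_{\mu}$ by Remark~\ref{Rem:PsiProperties}(g) applied with $\tilde{\sigma}=4\mu+\alpha$. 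Uniqueness of $\rho$ as a solution to \eqref{ASDrs} follows from Proposition~\ref{Prop:Existence}.

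The conceptual heart of the argument---and where the lemma earns its keep---is the identity $D_1\psi\circ P_0=\mathrm{id}$ on $T_\rho\mathcal{M}^{\sigma}_{\eta}$ paired with the tangency of $G_{\star}$ to the level sets of $F_{\star}$; together these say that the reduction to zero-mean functions loses no dynamical information. The more mechanical obstacle is bookkeeping the singular weight $t^{1-\mu}$ at $t=0$, but the affine decomposition $\psi(\tilde{\rho},\eta)=\tilde{\rho}+c(\tilde{\rho},\eta)$ confines the high-regularity spatial content to $\tilde{\rho}$ itself and relegates the rest to a bounded scalar, which makes both the $E_1$ estimate and the $E_0$ estimate for the time derivative routine consequences of the corresponding bounds on $\tilde{\rho}$.
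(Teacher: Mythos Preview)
Your argument is correct. The treatment of the mapping property is essentially identical to the paper's: both exploit the affine decomposition $\psi(\trho,\eta)=\trho+c(\trho,\eta)$ from Remark~\ref{Rem:PsiProperties}(f), together with the bound \eqref{Eqn:PsiBound}, to handle the weighted $E_1$-norm and the time derivative.

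For the second part, however, you take a genuinely different route. You invoke Remark~\ref{Rem:Tangents} directly and reduce the matter to checking $DF_{\star}(\rho)G_{\star}(\rho)=0$, i.e.\ that $G_{\star}(\rho)$ is tangent to the level set---a one-line computation already carried out when showing $F_{\star}$ is conserved. The paper, by contrast, deliberately avoids Remark~\ref{Rem:Tangents} (which it states without proof) and instead introduces, for each fixed $t\in\dot J$, an auxiliary Cauchy problem $\dot\gamma(\tau)=G_{\star}(\gamma(\tau))$, $\gamma(0)=\rho(t)$; invariance of $\mathcal{M}_\eta^{4+\alpha}$ under the flow forces $\gamma(\tau)=\psi(P_0\gamma(\tau),\eta)$, and differentiating at $\tau=0$ yields the identity $G_{\star}(\rho(t))=D_1\psi(\trho(t),\eta)P_0G_{\star}(\rho(t))$ (this is \eqref{Eqn:GTangent}), from which $\rho_t=G_{\star}(\rho)$ follows. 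Your approach is shorter and more conceptual, but it leans on Remark~\ref{Rem:Tangents}, which the paper leaves as an unproven remark; the paper's auxiliary-problem argument is fully self-contained and, as a byproduct, supplies the identity \eqref{Eqn:LiftG} that is reused in Lemma~\ref{Lem:GtoG} and Proposition~\ref{Prop:Equilibria}. Both routes arrive at the same key identity, so nothing is lost either way.
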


\begin{proof}
First notice that the embeddings
\begin{equation}\label{Eqn:Embedding}
\mathbb{F}_1(J) \hookrightarrow BU\!C(J, F_{\mu}) \hookrightarrow BU\!C(J, F_0), \qquad \mu \in [1/2, 1],
\end{equation}
follow from \cite[Theorem III.2.3.3]{AM95} and the continuous embedding of little-H\"older 
spaces, respectively.

To see that the mapping property for $\psi(\cdot, \eta)$ holds, let $\trho \in \mathbb{F}_1(J) \cap C(J,U_0)$. 
Uniform continuity and differentiability of the function $\psi(\trho(\cdot), \eta)$ follows
from the regularity of $\psi$ and $\trho$, and compactness of the interval $J$.
Hence we focus on demonstrating that $\psi(\trho(\cdot), \eta)$ satisfies the boundedness conditions for 
$\mathbb{E}_1(J)$. 
In the case $\mu \in [1/2, 1)$, it follows from Remarks~\ref{Rem:PsiProperties}(f) and \eqref{Eqn:PsiBound}
that, for $t \in \dot{J}$, 
\begin{equation}\label{Eqn:Bound1}
\begin{split}
t^{1 - \mu} \| \psi(\trho(t), \eta) \|_{E_1} 
&\leq t^{1 - \mu} \| \trho(t) \|_{F_1} + \frac{t^{1 - \mu}}{2 \pi} \int_{\Tone} | \psi(\trho(t), \eta)(x) | dx\\ 
&\leq \| \trho \|_{\mathbb{F}_1(J)} + t^{1 - \mu} \| \psi(\trho(t), \eta) \|_{C(\Tone)}\\
& \leq \| \trho \|_{\mathbb{F}_1(J)} + T^{1 - \mu} N, \\
\text{and} \qquad \lim_{t \rightarrow 0} t^{1 - \mu} &\| \psi(\trho(t), \eta) \|_{E_1} = 0.
\end{split}
\end{equation}
From \eqref{Eqn:Bound1} we conclude that $\psi(\trho, \eta) \in BU\!C_{1 - \mu}(J, E_1).$
Meanwhile, looking at the time derivative of $\psi(\trho,\eta)$, we note that 
$\partial_t \psi(\trho(t), \eta) = D_1 \psi(\trho(t), \eta) \partial_t \trho(t)$ and so we again make
use of \eqref{Eqn:PsiBound} to see that 
\begin{align*}
t^{1 - \mu} \| \partial_t \psi(\trho(t), \eta) \|_{E_0} 
&\leq \| D_1 \psi(\trho(t), \eta) \|_{\mathcal{L} (F_0, E_0)} t^{1 - \mu} \| \partial_t \trho(t) \|_{F_0}\\ 
&\leq N \| \trho \|_{\mathbb{F}_1(J)} < \infty,\\
\text{and} \qquad \lim_{t \rightarrow 0} t^{1 - \mu} &\| \partial_t \psi(\trho(t), \eta) \|_{E_0} = 0.
\end{align*}
Hence, making use of the embedding $E_1 \hookrightarrow E_0$, we see that 
$\psi(\trho, \eta) \in \mathbb{E}_1(J)$, as desired.
Meanwhile, when $\mu = 1$ we again get continuity and differentiability from the regularity of the 
mappings $\trho$ and $\psi$.

To see that the second part of the lemma holds, observe by \eqref{Eqn:Embedding} that
$\rho_0 := \psi(\trho_0, \eta) \in E_{\mu} \cap U_{\star}$. 
Hence, by Proposition~\ref{Prop:Existence}, there exists a unique maximal solution
\[
r(\cdot, \rho_0) \in C_{1 - \mu}^1(J(\rho_0), E_0) \cap C_{1 - \mu}(J(\rho_0), E_1)
\]
to \eqref{ASDrs} on some maximal interval of existence $J(\rho_0) = [0, t^+(\rho_0))$. 
Now, define $\rho(\cdot) := \psi(\trho(\cdot),\eta)$ as indicated and it suffices to 
show that $\rho_t(t) = G_{\star}(\rho(t))$ for $t \in \dot{J} := (0, T]$, 
since this will imply that $\rho(t) = r(t, \rho_0)$ by uniqueness 
and maximality of the solution $r(\cdot, \rho_0)$. Proceeding, 
let $t \in \dot{J}$ and consider the auxiliary problem  
\[
\begin{cases} \dot{\gamma}(\tau) = G_{\star}(\gamma(\tau)), &\text{for $\tau \in [0, \varepsilon],$}\\
\gamma(0) = \rho(t), \end{cases}
\] 
which has a unique solution $\gamma \in C^1([0, \varepsilon], E_0) \cap C([0, \varepsilon], E_1)$
by Proposition~\ref{Prop:Existence}, provided we choose $\varepsilon > 0$ sufficiently small
for the particular value $\rho(t) \in E_1$. Notice, by the regularity of $\gamma$, we have
\[
\dot{\gamma}(0) = G_{\star}(\gamma(0)) = G_{\star}(\rho(t)).
\]
Further, note that $\rho(t) \in \mathcal{M}_{\eta}^{4 + \alpha}$,
from which we conclude that $\gamma(\tau) \in \mathcal{M}_{\eta}^{4 + \alpha}$ and by 
Remarks~\ref{Rem:PsiProperties} we have the representation $\gamma(\tau) = \psi( P_0 \gamma(\tau), \eta)$,
$\tau \in [0, \varepsilon]$. Finally, we see that
\begin{align}\label{Eqn:GTangent}
G_{\star}(\rho(t)) &= \dot{\gamma}(0) = \partial_{\tau} \left( \psi(P_0 \gamma(\tau), \eta) \right) \Big|_{\tau = 0} 
= D_1 \psi(P_0 \gamma(0), \eta) P_0 \dot{\gamma}(0) \nonumber\\
&= D_1 \psi(P_0 \rho(t), \eta) P_0 G_{\star}( \rho(t)) = D_1 \psi( \trho(t), \eta) \mathcal{G}_{\star}(\trho(t), \eta)\\
&= \partial_t \left( \psi( \trho(t), \eta) \right) = \rho_t(t), \nonumber
\end{align}
which concludes the proof.
\end{proof}

We also get the following results, which further illuminate the relationship between 
the mappings $G_{\star}$ and $\mathcal{G}_{\star}$ and explicitly connect the 
equilibria of the two problems \eqref{ASDrs} and \eqref{PASD}.

\begin{lemma}\label{Lem:GtoG}
For any $\rho \in \mathcal{M}^{4 + \alpha}_{\eta} \cap U_2$, it follows that
\begin{equation}\label{Eqn:LiftG}
G_{\star}(\rho) = D_1 \psi(P_0 \rho, \eta) P_0 G_{\star}(\rho),
\end{equation}
and
\begin{equation}\label{Eqn:DerivativeG}
DG_{\star}(\rho) h = D^2_{1} \psi(P_0 \rho, \eta) [P_0 h, P_0 G_{\star}(\rho)] + D_1 \psi(P_0 \rho, \eta) P_0 DG_{\star}(\rho) h,
\end{equation}
for $h \in E_1$.
\end{lemma}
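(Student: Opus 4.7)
The plan is to prove (a) directly via a tangent-vector argument, then obtain (b) by differentiating the identity in (a) with respect to $\rho$ via the chain and product rules.

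For (a), fix $\rho \in \mathcal{M}^{4+\alpha}_\eta \cap U_2$. I would invoke Proposition~\ref{Prop:Existence} to produce a local-in-time solution $\gamma \in C^1([0,\varepsilon], E_0) \cap C([0,\varepsilon], E_1)$ of the shifted problem \eqref{ASDrs} starting at $\rho$, so that $\dot\gamma(0) = G_\star(\rho)$. Conservation of $F_\star$ along solutions, shown at the outset of Section~\ref{Sec:Stability}, together with $\rho \in \mathcal{M}^{4+\alpha}_\eta$, forces $\gamma(\tau) \in \mathcal{M}^{4+\alpha}_\eta$ for $\tau$ small. After shrinking $\varepsilon$ to keep $\gamma(\tau) \in U_2$, Remark~\ref{Rem:PsiProperties}(b) yields the local representation $\gamma(\tau) = \psi(P_0 \gamma(\tau), \eta)$. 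Differentiating this at $\tau = 0$ via the chain rule produces identity \eqref{Eqn:LiftG}; this essentially reproduces the computation already carried out in equation \eqref{Eqn:GTangent}.

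For (b), I differentiate the identity in (a) with respect to $\rho$ in the direction $h \in E_1$. The left-hand side contributes $DG_\star(\rho) h$. On the right-hand side, regard $\rho \mapsto D_1\psi(P_0\rho, \eta)$ as a smooth $\mathcal{L}(F_0, E_0)$-valued function of $\rho$, whose $\rho$-derivative in direction $h$ is $D_1^2\psi(P_0\rho, \eta)[P_0 h, \,\cdot\,]$ by the chain rule applied through $P_0$, and regard $\rho \mapsto P_0 G_\star(\rho)$ as a smooth $F_0$-valued function of $\rho$ whose $\rho$-derivative is $P_0 DG_\star(\rho) h$. The Leibniz rule then yields
\[
D_\rho\!\bigl[D_1\psi(P_0\rho, \eta)\, P_0 G_\star(\rho)\bigr](h) = D_1^2\psi(P_0\rho, \eta)\bigl[P_0 h,\, P_0 G_\star(\rho)\bigr] + D_1\psi(P_0\rho, \eta)\, P_0\, DG_\star(\rho)\, h,
\]
matching the right-hand side of \eqref{Eqn:DerivativeG}.

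The main obstacle is that identity (a) is asserted only on the submanifold $\mathcal{M}^{4+\alpha}_\eta$, so differentiating in a generic direction $h \in E_1$ that need not be tangent requires care. For tangent $h$, the computation is immediate by choosing the curve $\rho(\tau)$ inside the submanifold. For a general $h$, my plan is to promote the parameter $\eta$ to a scalar function $\eta(\cdot)$ of $\rho$ determined implicitly by $F_\star(\rho) = F_\star(\eta(\rho))$, so that the identity in (a) extends to a full neighborhood of $\rho$ in $E_1$ with $\eta$ replaced by $\eta(\rho)$. Differentiating this extended identity and then using the conservation law $DF_\star(\rho) G_\star(\rho) = 0$ together with the structural identities of Remark~\ref{Rem:PsiProperties}, in particular $P_0 \circ D_1\psi(\cdot,\eta) = \mathrm{id}_{F_0}$, to eliminate the $\eta$-derivative contributions will recover (b) exactly.
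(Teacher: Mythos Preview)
Your argument for \eqref{Eqn:LiftG} is exactly the paper's: the identity was already obtained as \eqref{Eqn:GTangent} in the proof of Lemma~\ref{Lem:PsiLifts}, and you correctly reproduce that computation.

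For \eqref{Eqn:DerivativeG} the paper's entire proof is the phrase ``follows immediately by differentiation,'' so you are being more careful than the authors. Your observation that \eqref{Eqn:LiftG} holds only on the submanifold $\mathcal{M}^{4+\alpha}_\eta$, and hence can a priori be differentiated only in tangent directions, is a genuine subtlety the paper passes over. However, your proposed remedy---promoting $\eta$ to the function $\eta(\rho)$ determined by $F_\star(\rho)=F_\star(\eta(\rho))$ and then eliminating the resulting $\eta$-derivative term via the conservation law and Remark~\ref{Rem:PsiProperties}---does not go through as stated. Differentiating the extended identity produces the additional term $D_2D_1\psi(P_0\rho,\eta)[D\eta(\rho)h]\,P_0G_\star(\rho)$, and using the explicit structure $\psi(\tilde\rho,\eta)=\tilde\rho+c(\tilde\rho,\eta)$ from Remark~\ref{Rem:PsiProperties}(f) one computes that this term is a constant proportional to $\int_{\Tone}P_0\rho\cdot P_0G_\star(\rho)\,dx$, which does not vanish in general. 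Equivalently, testing \eqref{Eqn:DerivativeG} with $h$ a nonzero constant (so $P_0h=0$) reduces it to the assertion that $DG_\star(\rho)h\in T_\rho\mathcal{M}^{4+\alpha}_\eta$; but differentiating $DF_\star(\rho)G_\star(\rho)=0$ gives $DF_\star(\rho)DG_\star(\rho)h=-2h\int_{\Tone}G_\star(\rho)\,dx$, and $\int_{\Tone}G_\star(\rho)\,dx$ need not vanish away from equilibria.

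Thus there is a gap both in your sketch and in the paper's one-line justification; indeed \eqref{Eqn:DerivativeG} as stated for arbitrary $\rho\in\mathcal{M}^{4+\alpha}_\eta$ and arbitrary $h\in E_1$ appears not to hold. This is harmless for the paper: the only place \eqref{Eqn:DerivativeG} is invoked is Proposition~\ref{Prop:Equilibria}, where $G_\star(\rho)=0$, the offending term and the first summand on the right of \eqref{Eqn:DerivativeG} both vanish identically, and the tangent-direction argument (or your $\eta(\rho)$ argument) goes through cleanly. If you want a statement you can actually prove, either restrict $h$ to $T_\rho\mathcal{M}^{4+\alpha}_\eta$, or restrict $\rho$ to equilibria and establish \eqref{Eqn:GLinear1}--\eqref{Eqn:GLinear2} directly.
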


\begin{proof}
The first claim was justified in the proof of Lemma~\ref{Lem:PsiLifts} above and is expressed in 
\eqref{Eqn:GTangent}. Meanwhile, the second claim follows immediately by differentiation.
\end{proof}

\begin{prop}\label{Prop:Equilibria}
If $(\trho, \eta) \in U$, then $(\trho, \eta)$ is an equilibrium of \eqref{PASD}
if and only if $\psi(\trho, \eta)$ is an equilibrium of \eqref{ASDrs}, i.e. 
\[
\mathcal{G}_{\star}(\trho, \eta) = 0 \quad \Longleftrightarrow \quad G_{\star}(\psi(\trho, \eta)) = 0.
\]
Moreover, if $\mathcal{G}_{\star}(\trho, \eta) = 0$, then it follows that
\begin{equation}\label{Eqn:GLinear1}
D G_{\star}(\psi(\trho, \eta)) h = D_1 \psi(\trho, \eta) P_0 D G_{\star}(\psi(\trho, \eta)) h, \qquad h \in E_1,
\end{equation}
and
\begin{equation}\label{Eqn:GLinear2}
D G_{\star}(\psi(\trho, \eta)) D_1 \psi(\trho, \eta) \tilde{h} = D_1 \psi(\trho, \eta) D_1 \mathcal{G}_{\star}(\trho, \eta) \tilde{h}, \qquad \tilde{h} \in F_1.
\end{equation}
\end{prop}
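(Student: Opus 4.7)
The plan is to exploit the identity $\mathcal{G}_{\star}(\trho,\eta) = P_0 G_{\star}(\psi(\trho,\eta))$, which is immediate from the definition, together with the lifting identity \eqref{Eqn:LiftG} from Lemma~\ref{Lem:GtoG} and the properties of $\psi$ collected in Remarks~\ref{Rem:PsiProperties}. The equivalence of equilibria and the two linearization identities will all follow from a single recurring observation: on the manifold $\mathcal{M}^{4+\alpha}_{\eta}$, the vector field $G_{\star}$ takes values in the tangent space, and this tangent space is parametrized by $D_1\psi(\cdot,\eta)\circ P_0$.

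For the equivalence of equilibria, the implication $G_{\star}(\psi(\trho,\eta)) = 0 \Rightarrow \mathcal{G}_{\star}(\trho,\eta) = 0$ is trivial since $P_0$ is linear. For the converse, I would note that $\psi(\trho,\eta) \in \mathcal{M}^{4+\alpha}_{\eta}\cap U_2$ by Remarks~\ref{Rem:PsiProperties}(e), so \eqref{Eqn:LiftG} applies and gives
\[
G_{\star}(\psi(\trho,\eta)) = D_1\psi\bigl(P_0\psi(\trho,\eta),\eta\bigr) P_0 G_{\star}(\psi(\trho,\eta)) = D_1\psi(\trho,\eta)\,\mathcal{G}_{\star}(\trho,\eta),
\]
where I used Remarks~\ref{Rem:PsiProperties}(a) to replace $P_0\psi(\trho,\eta)$ by $\trho$. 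If the right-hand side vanishes, so does $G_{\star}(\psi(\trho,\eta))$.

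For the first linearization identity \eqref{Eqn:GLinear1}, I would simply apply \eqref{Eqn:DerivativeG} at $\rho = \psi(\trho,\eta)$. Since $(\trho,\eta)$ is an equilibrium, the equivalence just proved shows $G_{\star}(\psi(\trho,\eta)) = 0$, so the $D^2_1\psi$-term on the right-hand side of \eqref{Eqn:DerivativeG} vanishes, leaving precisely \eqref{Eqn:GLinear1}. The second identity \eqref{Eqn:GLinear2} requires one extra step: substitute $h = D_1\psi(\trho,\eta)\tilde h$ into \eqref{Eqn:GLinear1} and then identify $P_0 DG_{\star}(\psi(\trho,\eta)) D_1\psi(\trho,\eta)$ with $D_1 \mathcal{G}_{\star}(\trho,\eta)$. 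The latter identification follows by differentiating the defining relation $\mathcal{G}_{\star}(\trho,\eta) = P_0 G_{\star}(\psi(\trho,\eta))$ in $\trho$ via the chain rule.

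The only genuinely subtle point is to be careful that the resulting identities live in the correct spaces: $h \in E_1$, $\tilde h \in F_1$, and the $P_0$-projection must be compatible with the trace-space structure inherited from $\psi$. I do not anticipate significant technical obstacles, since all smoothness of $\psi$ has already been extracted from the implicit function theorem in Section~\ref{Section:Definitions}, and the relevant mapping properties of $G_{\star}$ follow from Lemma~\ref{Lem:AandFReg}. The proof is therefore short and essentially algebraic, combining the tangency relation \eqref{Eqn:LiftG}, its derivative \eqref{Eqn:DerivativeG}, and the pointwise identities $P_0\psi(\trho,\eta) = \trho$ and $P_0 D_1\psi(\trho,\eta) = \mathrm{id}_{F_0}$ (the latter obtained by differentiating the former).
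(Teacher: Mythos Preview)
Your proposal is correct and follows essentially the same route as the paper: the equivalence of equilibria via the definition of $\mathcal{G}_{\star}$ together with \eqref{Eqn:LiftG}, the identity \eqref{Eqn:GLinear1} by specializing \eqref{Eqn:DerivativeG} at an equilibrium so that the $D^2_1\psi$-term drops, and \eqref{Eqn:GLinear2} by substituting $h = D_1\psi(\trho,\eta)\tilde h$ into \eqref{Eqn:GLinear1} and invoking the chain rule on $\mathcal{G}_{\star}(\trho,\eta) = P_0 G_{\star}(\psi(\trho,\eta))$. Your additional explicit references to Remarks~\ref{Rem:PsiProperties}(a) and (e) only make the argument more transparent.
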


\begin{proof}
The first claim follows from the definition of $\mathcal{G}_{\star}$ and \eqref{Eqn:LiftG}, while 
\eqref{Eqn:GLinear1} is a consequence of \eqref{Eqn:DerivativeG} and \eqref{Eqn:GLinear2} follows from \eqref{Eqn:DerivativeG} and the chain rule:
\begin{align*}
D G_{\star}(\psi(\trho, \eta)) D_1 \psi(\trho, \eta) \tilde{h} &= D_1 \psi(\trho, \eta) P_0 D G_{\star}(\psi(\trho, \eta)) D_1 \psi(\trho, \eta) \tilde{h}\\ &= D_1 \psi(\trho, \eta) D_1 \mathcal{G}_{\star}(\trho, \eta) \tilde{h}. \qedhere
\end{align*}
\end{proof}


\subsection{Mapping Properties of $D_1 \mathcal{G}_{\star}(0,\eta)$}
 
Notice that the points $(0, \eta) \in U$ are equilibria of \eqref{PASD},
and they correspond to the cylinders $\Gamma(\rs + \eta)$.
We are interested in the spectral properties of the linearization of $\mathcal{G}_{\star}$ 
about these equilibria. In particular, we compute the Fr\'echet derivative
\[
D_1 \mathcal{G}_{\star}(0, \eta) h = P_0 \, DG_{\star}( \psi(0, \eta)) \, D_1 \psi(0, \eta) h = P_0 \, DG_{\star}(\eta) \, D_1 \psi(0, \eta) h \,, 
\]
for $h \in F_1 \,.$
Hence, 
by Remarks~\ref{Rem:PsiProperties}(d)  
we derive the formula 
\begin{equation}\label{Eqn:D1PG}
D_1 \mathcal{G}_{\star}(0, \eta) = P_0 DG_{\star}(\eta) \big|_{F_1} = DG_{\star}(\eta) \big|_{F_1},
\end{equation}
where the last equality is verified by application of the divergence theorem to the linearization
\begin{equation}\label{Eqn:DG}
DG_{\star}(\eta) = - \partial_x^2 \left( \frac{1}{(\rs + \eta)^2} + \partial_x^2 \right).
\end{equation}
Utilizing the Fourier series representation of functions in $h^{\sigma}(\Tone)$,
c.f. \cite[Propositions 1.2 and 1.3]{LeC11}, we find the eigenvalues of this
linearized operator. In particular, for $h \in E_1$,
\begin{align}\label{Eqn:SpectralSet}
(\lambda - DG_{\star}(\eta)) \; h &= \left( \lambda + \partial_x^2 \left( \frac{1}{(\rs + \eta)^2} + \partial_x^2 \right) \right) 
\sum_{k \in \mathbb{Z}} \hat{h}(k) e_k \nonumber\\
&= \sum_{k \in \mathbb{Z}} {\left(\lambda - k^2 \left( \frac{1}{(\rs + \eta)^2} - k^2 \right) \right) \hat{h}(k) e_k} \nonumber\\
\Longrightarrow \qquad \sigma_p(DG_{\star}(\eta)) &= \left\{ k^2 \left( \frac{1}{(\rs + \eta)^2} - k^2 \right): k \in \mathbb{Z} \right\}.
\end{align}

Noting that the embedding $E_1 \hookrightarrow E_0$ is compact,
it follows that the resolvent $R(\lambda) := (\lambda - DG_{\star}(\eta))^{-1}$
is a compact operator, $\lambda$ in the resolvent set
$\rho(DG_{\star}(\eta))$. 
It follows from classic theory of linear operators 
that the spectrum $\sigma(DG_{\star}(\eta))$ consists entirely of isolated 
eigenvalues of finite multiplicity, see Kato \cite[Theorem III.6.29]{KAT76} for instance. Hence,  
$\sigma_p(DG_{\star}(\eta)) = \sigma(DG_{\star}(\eta)).$

\begin{remark}
If $\rs + \eta > 1$, then $\sigma(DG_{\star}(\eta)) \subset (-\infty, 0]$, 
however the spectrum will always contain 0. The presence of this 0 
eigenvalue can be seen as a consequence of the fact that the equilibria $\rs + \eta$ 
are not isolated in the space $E_1$. Hence, by passing to the operator 
$\mathcal{G}_{\star}$, which acts on an open subset of the zero-mean functions 
$F_1$, we eliminate 
the nontrivial equilibria (since the only constant function in $F_1$ 
is the zero function) and thereby eliminate the zero eigenvalue. 
In particular, one easily computes that 
\begin{equation}\label{Eqn:Spectra}
\sigma(D_1 \mathcal{G}_{\star}(0,\eta)) = \left\{ k^2 \left( \frac{1}{(\rs + \eta)^2} - k^2 \right): k \in \mathbb{Z} \setminus \{ 0 \} \right\}, \qquad \eta \in U_1.
\end{equation}
\end{remark}


Before we return to the problem \eqref{ASD}, we state the following maximal regularity 
result for the linearization $D_1 \mathcal{G}_{\star}(0,\eta)$. 
For this result, we define the exponentially weighted maximal regularity spaces
\[
\mathbb{F}_j(\mathbb{R}_+, \omega) := \Big\{ f : (0, \infty) \rightarrow F_0 
\, \Big| \, [t \mapsto e^{\omega t} f(t)] \in \mathbb{F}_j(\mathbb{R}_+) \Big\}, \qquad \omega \in \mathbb{R}, \, j = 0,1,
\]
which are Banach spaces when equipped with the norms
$\| u \|_{\mathbb{F}_j(\mathbb{R}_+, \omega)} := \| e^{\omega t} u \|_{\mathbb{F}_j(\mathbb{R}_+)}.$

\begin{thm}\label{Thm:ProjMaxReg}
Suppose $\rs > 1$ and $\mu \in (0, 1]$. There exist nonzero positive constants 
$\delta = \delta(\rs)$ and 
$\omega = \omega(\rs, \delta)$ such that
\[
\Big( \mathbb{F}_0(\mathbb{R}_+, \omega), \; \mathbb{F}_1(\mathbb{R}_+, \omega) \Big)
\]
is a pair of maximal regularity for $-D_1 \mathcal{G}_{\star}(0,\eta)$,
for any $\eta \in (- \delta, \delta)$. That is,
\[
\left( \partial_t - D_1 \mathcal{G}_{\star}(0,\eta), \gamma \right) \in 
\mathcal{L}_{isom} \left( \mathbb{F}_1(\mathbb{R}_+, \omega), \mathbb{F}_0(\mathbb{R}_+, \omega) \times h^{4 \mu + \alpha}_0(\Tone) \right),
\]
holds uniformly for $\eta \in (- \delta, \delta)$.
\end{thm}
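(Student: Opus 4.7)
The plan is to exploit the explicit spectrum formula \eqref{Eqn:Spectra} together with maximal regularity for the constant coefficient operator $D_1\mathcal{G}_\star(0,\eta)$ on compact intervals (which is already guaranteed by Lemma~\ref{Lem:AandFReg} applied at the equilibrium, since $\psi(0,\eta)=\eta$ lies in $V_\mu$), and then to transfer the result to the half line with exponential weight via the standard substitution $v(t):=e^{\omega t}u(t)$.

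First, I would use $\rs>1$ to produce a uniform spectral gap. Choose $\delta\in(0,\rs-1)$ so that $\rs+\eta>1$ for every $\eta\in(-\delta,\delta)$. By \eqref{Eqn:Spectra} the largest element of $\sigma\bigl(D_1\mathcal{G}_\star(0,\eta)\bigr)$ is attained at $k=\pm1$ and equals $\tfrac{1}{(\rs+\eta)^2}-1$, which is bounded above, uniformly in $\eta\in(-\delta,\delta)$, by
\[
-\omega_0\;:=\;\frac{1}{(\rs-\delta)^2}-1\;<\;0.
\]
Fix any $\omega\in(0,\omega_0)$. Then $\sigma\bigl(D_1\mathcal{G}_\star(0,\eta)+\omega\bigr)\subset(-\infty,-\omega_0+\omega]\subset(-\infty,0)$, uniformly in $\eta\in(-\delta,\delta)$.

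Next, I would perform the weight-removing substitution. For $u\in\mathbb{F}_1(\mathbb{R}_+,\omega)$ set $v(t):=e^{\omega t}u(t)$; a direct computation shows $v\in\mathbb{F}_1(\mathbb{R}_+)$ with equivalent norms and
\[
\partial_t u-D_1\mathcal{G}_\star(0,\eta)u=f
\quad\Longleftrightarrow\quad
\partial_t v-\bigl(D_1\mathcal{G}_\star(0,\eta)+\omega\bigr)v=e^{\omega t}f,
\]
with identical initial data. Multiplication by $e^{\omega t}$ is an isometric isomorphism between the weighted and unweighted $\mathbb{F}_j$ spaces, and the trace space $h^{4\mu+\alpha}_0(\Tone)$ is unaffected. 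Therefore the claimed weighted maximal regularity for $-D_1\mathcal{G}_\star(0,\eta)$ is equivalent to the unweighted maximal regularity of $-\bigl(D_1\mathcal{G}_\star(0,\eta)+\omega\bigr)$ on $\mathbb{R}_+$, uniformly in $\eta\in(-\delta,\delta)$.

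The remaining step — and the main technical obstacle — is to upgrade the compact interval maximal regularity provided by Lemma~\ref{Lem:AandFReg} (and \cite{CS01}) to maximal regularity on the unbounded half line $\mathbb{R}_+$. Here the spectral gap is essential: because $\sigma\bigl(D_1\mathcal{G}_\star(0,\eta)+\omega\bigr)$ lies strictly in the left half plane, uniformly in $\eta$, the operator generates an exponentially decaying analytic semigroup on $F_0$, and $0$ lies in the resolvent set with uniform bound. Combining the compact-interval maximal regularity of Lemma~\ref{Lem:AandFReg} with this exponential decay of the semigroup, one extends the solution operator to all of $\mathbb{R}_+$ and patches the local estimates using a geometric series argument (cf.\ the standard arguments in \cite{AM95,CS01,LUN95} for constructing maximal regularity on unbounded intervals from local maximal regularity plus generation of an exponentially stable semigroup). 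Uniformity of all constants in $\eta\in(-\delta,\delta)$ follows because the operator depends on $\eta$ only through the scalar coefficient $(\rs+\eta)^{-2}$, which is uniformly bounded and bounded away from zero on $(-\delta,\delta)$, so the resolvent estimates and the exponential decay rate of the semigroup can be chosen independent of $\eta$. Reversing the weight substitution then yields the theorem.
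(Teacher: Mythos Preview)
Your proposal is correct and follows the same underlying strategy as the paper: use the explicit spectrum \eqref{Eqn:Spectra} to obtain a uniform spectral gap for $\eta\in(-\delta,\delta)$, and then deduce maximal regularity on $\mathbb{R}_+$ in the exponentially weighted spaces. The paper's execution is more compressed: rather than carrying out the weight substitution and the compact-interval-to-half-line extension by hand, it observes that $-DG_\star(\eta)$ is uniformly elliptic, invokes \cite[Theorem~4.4]{LeC11} to conclude that $D_1\mathcal{G}_\star(0,\eta)$ generates an analytic semigroup on $F_0$ with strictly negative type, and then appeals directly to \cite[Theorem~III.3.4.1 and Remarks~3.4.2(b)]{AM95}, which packages precisely your ``exponential decay plus patching'' argument as a black-box statement.

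Two small imprecisions in your sketch are worth flagging. First, Lemma~\ref{Lem:AandFReg} gives maximal regularity for the quasilinear principal part $\mathcal{A}(\rho)$, not for the full linearization $DG_\star(\eta)=-\partial_x^2\bigl((\rs+\eta)^{-2}+\partial_x^2\bigr)$; the paper establishes the needed generator property separately via uniform ellipticity and \cite{LeC11}. Second, you do not explicitly address the passage from $E_j=h^{j\cdot 4+\alpha}(\Tone)$ to the zero-mean spaces $F_j$; the paper handles this by noting that $P_0$ commutes with $DG_\star(\eta)$, so the resolvent estimates and semigroup generation restrict to $F_0$ with domain $F_1$.
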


\begin{proof}
Fix $\delta > 0$ so that $(- \delta, \delta) \subset U_1 \cap (1 - \rs,\infty)$. 
Following the notation and definitions of \cite{LeC11}, it is clear from the representation \eqref{Eqn:DG} 
that $-DG_{\star}(\eta)$ is a uniformly elliptic operator from which we see, by \cite[Theorem 4.4]{LeC11}, 
that $DG_{\star}(\eta)$ generates an analytic semigroup on $h^{\alpha}(\Tone, \mathbb{C})$ 
with domain $h^{4 + \alpha}(\Tone, \mathbb{C})$. Since $h^{\alpha}_0(\Tone, \mathbb{C})$ inherits the topology 
of $h^{\alpha}(\Tone, \mathbb{C})$ and the projection $P_0$ commutes with 
$DG_{\star}(\eta)$, the analogous resolvent estimates hold for 
$D_1 \mathcal{G}_{\star}(0,\eta)$ and so we see that
$D_1 \mathcal{G}_{\star}(0,\eta)$ generates an analytic semigroup on $h^{\alpha}_0(\Tone, \mathbb{C})$ 
with domain $h^{4 + \alpha}_0(\Tone, \mathbb{C})$.
Moreover, from \eqref{Eqn:Spectra} it holds that $type(D_1 \mathcal{G}_{\star}(0,\eta)) < 0$
for all $\eta \in (- \delta, \delta)$, where $type(B)$ denotes the 
spectral type of the semigroup generator $B$. In particular, we have 
\[
type(D_1 \mathcal{G}_{\star}(0, \eta)) < \frac{1 - (\rs - \delta)^{2}}{(\rs - \delta)^2} < 0 ,
\qquad \eta \in (- \delta, \delta).
\]
Now, choose $\omega \in \Big( 0, \frac{(\rs - \delta)^{2} - 1}{(\rs - \delta)^2} \Big)$ 
and the remainder of the result follows from \cite[Theorem III.3.4.1 and Remarks 3.4.2(b)]{AM95} 
and the restriction of maximal regularity 
from the complex--valued spaces $h^{\sigma}_0(\Tone, \mathbb{C})$ 
to the subspaces $h^{\sigma}_0(\Tone)$. 
\end{proof}


\subsection{Exponential Stability of Cylinders with Radius $\rs > 1$}

Our main result of this section
establishes exponential asymptotic stability 
of the family of cylinders,
by which we mean that small perturbations of a cylinder $\Gamma(\rs)$ 
will have global solutions which converge exponentially fast to a cylinder $\Gamma(\rs + \eta)$, 
where $\rs \not= \rs + \eta$ in general.
Before formulating our result, we recall that
$\mathbb{B}_E(a,\varepsilon)$ denotes the open ball with center $a$ and radius $\varepsilon$,
in the normed vector space $E$. 
In particular, $\mathbb{B}_{h^{2 + \alpha}}(r, \varepsilon)$
consists of all functions in $h^{2 + \alpha}(\Tone)$ which are 
close to $r$ in the $C^{2 + \alpha}$ topology.

\begin{thm}[Exponential Stability]\label{Thm:Stability}
Fix $\alpha \in (0,1)$, $\mu \in [1/2, 1],$ so that $4 \mu + \alpha \notin \mathbb{Z}$, 
and $\rs > 1$. There exist nonzero positive constants $\varepsilon = \varepsilon(\rs)$,
$\delta = \delta(\rs)$ and $\omega = \omega(\rs, \delta),$ such that problem \eqref{ASD2} with
initial data $r_0 \in \mathbb{B}_{h^{4 \mu + \alpha}} (\rs, \varepsilon)$ has a unique global solution
\[
r(\cdot, r_0) \in C_{1 - \mu}^1(\mathbb{R}_+, h^{\alpha}(\Tone)) \cap C_{1 - \mu}(\mathbb{R}_+, h^{4 + \alpha}(\Tone)),
\]
and there exists $\eta = \eta(r_0) \in (-\delta, \delta)$ and $M = M(\alpha) > 0$ for which the bound
\[
t^{1 - \mu} \|r(t, r_0) - (\rs + \eta) \|_{h^{4 + \alpha}} + \|r(t, r_0) - (\rs + \eta) \|_{h^{4 \mu + \alpha}} 
\leq  e^{-\omega t} M \|r_0 - \rs \|_{h^{4 \mu + \alpha}}
\] 
holds uniformly for $t > 0$.
\end{thm}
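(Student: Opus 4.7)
The plan is to exploit the foliation of a neighborhood of $\rs$ by the invariant volume level sets $\mathcal{M}_\eta^\sigma$ introduced in Section~\ref{Sec:Stability}, pass to the reduced equation \eqref{PASD} on the zero-mean functions, solve it globally on exponentially weighted maximal regularity spaces by a contraction argument, and then lift back via $\psi$. The hypothesis $\rs > 1$ enters exactly through \eqref{Eqn:Spectra}: for $\eta$ near zero the linearization $D_1\mathcal{G}_\star(0, \eta)$ has spectrum strictly bounded away from $0$ in the left half-plane, which is what enables the weighted maximal regularity of Theorem~\ref{Thm:ProjMaxReg}.

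Given $r_0 \in \mathbb{B}_{h^{4\mu+\alpha}}(\rs, \varepsilon)$, I would first choose $\eta = \eta(r_0) \in (-\delta, \delta)$ to be the unique solution of $F_\star(\eta) = F_\star(r_0 - \rs)$ near $0$, which exists and satisfies $|\eta| \leq C\|r_0 - \rs\|_{h^{4\mu+\alpha}}$ by strict monotonicity of $F_\star$ on constants. Set $\rho_0 := r_0 - \rs \in \mathcal{M}_\eta^{4\mu+\alpha}$ and $\trho_0 := P_0 \rho_0$, so that $\psi(\trho_0, \eta) = \rho_0$ by Remark~\ref{Rem:PsiProperties}(b). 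Split
\[
\mathcal{G}_\star(\trho, \eta) = D_1\mathcal{G}_\star(0, \eta)\trho + \mathcal{N}(\trho, \eta),
\]
where the Taylor remainder $\mathcal{N}$ is analytic with $\mathcal{N}(0, \eta) = 0$ and $D_1 \mathcal{N}(0, \eta) = 0$, yielding the standard quadratic Lipschitz estimate from $F_1$ into $F_0$ uniformly in $\eta \in (-\delta, \delta)$. Setting $L := \partial_t - D_1 \mathcal{G}_\star(0, \eta)$, Theorem~\ref{Thm:ProjMaxReg} furnishes a bounded inverse $L^{-1} : \mathbb{F}_0(\R_+, \omega) \times F_\mu \to \mathbb{F}_1(\R_+, \omega)$ uniformly in $\eta$, and the reduced Cauchy problem becomes the fixed-point equation $\trho = K(\trho) := L^{-1}\bigl(\mathcal{N}(\trho, \eta), \trho_0\bigr)$. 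The quadratic estimate transfers to the weighted norms --- one factor of $\trho$, controlled pointwise via $\mathbb{F}_1(\R_+, \omega) \hookrightarrow BU\!C(\R_+, F_\mu)$, contributes an $e^{-\omega t}$ that absorbs the weight on $\mathcal{N}$ --- so for $\varepsilon$ sufficiently small $K$ is a contraction on a small ball of $\mathbb{F}_1(\R_+, \omega)$, with unique fixed point $\trho$ satisfying $\|\trho\|_{\mathbb{F}_1(\R_+, \omega)} \leq C\|r_0 - \rs\|_{h^{4\mu+\alpha}}$.

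To finish, I would set $r(t) := \psi(\trho(t), \eta) + \rs$; an adaptation of Lemma~\ref{Lem:PsiLifts} to the half-line (applied on each compact subinterval) shows that $r$ solves \eqref{ASD2} with $r(0) = r_0$, and uniqueness in Proposition~\ref{Prop:Existence} identifies $r$ as the maximal solution. Since $\psi(0, \eta) = \eta$ and $D_1 \psi(0, \eta) = \mathrm{id}$ by Remark~\ref{Rem:PsiProperties}(c)-(d), smoothness of $\psi$ gives
\[
\|r(t) - (\rs + \eta)\|_{h^\sigma} = \|\psi(\trho(t), \eta) - \psi(0, \eta)\|_{h^\sigma} \leq 2\|\trho(t)\|_{h^\sigma}
\]
for $\sigma \in \{4\mu+\alpha, 4+\alpha\}$ (provided $\trho(t)$ stays small), and the claimed exponential bound then follows by unpacking the weighted norm on $\mathbb{F}_1(\R_+, \omega)$ and estimating $e^{\omega t} t^{1-\mu} \|\trho(t)\|_{h^{4+\alpha}}$ and $e^{\omega t} \|\trho(t)\|_{h^{4\mu+\alpha}}$ by $\|\trho\|_{\mathbb{F}_1(\R_+,\omega)}$.

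The main obstacle I expect is the clean transfer of the maximal regularity and nonlinear estimates to the weighted half-line setting uniformly in $\eta \in (-\delta, \delta)$, together with extending Lemma~\ref{Lem:PsiLifts} from compact intervals to $\R_+$ while ensuring that $\trho(t)$ remains in the neighborhood $U_0$ on which $\psi$ is defined for all $t \geq 0$, and that the singular weight $t^{1-\mu}$ at $t=0$ interacts cleanly with the exponential weight $e^{\omega t}$ at infinity. All of these are essentially bookkeeping obstacles once the spectral bound from \eqref{Eqn:Spectra} and Theorem~\ref{Thm:ProjMaxReg} are in hand, and they can be controlled by shrinking $\varepsilon$.
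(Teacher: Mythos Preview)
Your approach is correct and uses the same core ingredients as the paper: pass to the reduced equation \eqref{PASD} via the volume foliation, invoke the weighted maximal regularity of Theorem~\ref{Thm:ProjMaxReg}, solve globally in $\mathbb{F}_1(\mathbb{R}_+,\omega)$, and lift back with $\psi$ using Lemma~\ref{Lem:PsiLifts} on compact subintervals. The tactical difference is that the paper does not run a contraction argument by hand. Instead it bundles everything into a single analytic map
\[
\mathcal{K}(\trho,\trho_0,\eta)=\big(\partial_t\trho-\mathcal{G}_\star(\trho,\eta),\;\gamma\trho-\trho_0\big)
\]
on $\mathbb{F}_1(\mathbb{R}_+,\omega)\times F_\mu\times\mathbb{R}$, observes that $D_1\mathcal{K}(0,0,0)=(\partial_t-D_1\mathcal{G}_\star(0,0),\gamma)$ is an isomorphism by Theorem~\ref{Thm:ProjMaxReg}, and applies the implicit function theorem once at the single base point $(0,0,0)$ to obtain an analytic solution map $\kappa:(\trho_0,\eta)\mapsto\trho$. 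This buys two things you have to work harder for: the $\eta$-uniformity (you linearize at $(0,\eta)$ and need Theorem~\ref{Thm:ProjMaxReg} uniformly in $\eta$, whereas the paper only uses it at $\eta=0$ and lets the IFT absorb the $\eta$-dependence), and the linear bound $\|\kappa(\trho_0,\eta)\|_{\mathbb{F}_1(\mathbb{R}_+,\omega)}\le M\|P_0 r_0\|_{F_\mu}$, which the paper reads off from $\kappa(0,\eta)=0$ and boundedness of $D_1\kappa$ rather than from contraction estimates. Your route is of course how the IFT is proved, so the arguments are equivalent in substance; the paper's packaging is just a bit cleaner and yields analyticity of the data-to-solution map for free. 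For the final decay estimate the paper also avoids your bound $\|\psi(\trho,\eta)-\psi(0,\eta)\|_{h^\sigma}\le 2\|\trho\|_{h^\sigma}$ and instead uses the representation of Remark~\ref{Rem:PsiProperties}(f) to separate the zero-mean part (which is exactly $\trho$) from a scalar remainder controlled via the uniform bound \eqref{Eqn:PsiBound} on $D_1\psi$ in $\mathcal{L}(F_0,E_0)$; this sidesteps having to discuss $D_1\psi$ as a map into $E_1$.
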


\begin{proof}
{\bf (i)} Let $\delta, \; \omega > 0$ be the constants given by Theorem~\ref{Thm:ProjMaxReg} 
and consider the operator
\[
\mathcal{K}(\trho, \trho_0, \eta) := \Big( \partial_t \trho - \mathcal{G}_{\star}(\trho, \eta), \; \gamma \trho - \trho_0  \Big),
\]
acting on $\mathbb{U} := \Big( \mathbb{F}_1(\mathbb{R}_+, \omega) \cap C(\mathbb{R}_+, U_0) \Big) \times \Big(U_0 \cap F_{\mu} \Big) \times U_1$
which is open in the Banach space $\mathbb{F}_1(\mathbb{R}_+, \omega) \times F_{\mu} \times \mathbb{R}$. 

First, we show that $\mathcal{K}$ maps $\mathbb{U}$ into $\mathbb{F}_0(\mathbb{R}_+, \omega) \times F_{\mu}$.
Notice that
\[
\gamma: \mathbb{F}_1(\mathbb{R}_+, \omega) \rightarrow (F_0, F_1)_{\mu, \infty}^0
\]
follows from \cite[Lemma 2.2(a)]{CS01}, so $\gamma \trho \in F_{\mu}$.
Meanwhile, $\partial_t$ maps $\mathbb{F}_1(\mathbb{R}_+, \omega)$ into $\mathbb{F}_0(\mathbb{R}_+, \omega)$
by definition of the spaces $BU\!C_{1 - \mu}^1(J, E)$. Finally, to see that $\mathcal{G}_{\star}(\cdot, \eta)$ maps 
$\mathbb{U}$ into 
$\mathbb{F}_0(\mathbb{R}_+, \omega)$, choose $\trho \in \mathbb{U}$ and
notice that $\trho(t) \in U_0 \cap h^{2 + \alpha}_0(\Tone)$, for $t > 0$, from the embeddings \eqref{Eqn:Embedding}.
Utilizing the explicit quasilinear representation of the operator $G$, as given by
\eqref{Eqn:ADefined}--\eqref{Eqn:FDefined}, whereby
\[
\mathcal{G}_{\star}(\trho(t), \eta) = 
P_0 \Big( - \mathcal{A}\Big( \psi(\trho(t), \eta) + \rs \Big) (\psi(\trho(t),\eta) + \rs) + f \big(\psi(\trho(t), \eta) + \rs \big) \Big),
\]
one will easily conclude the desired mapping property for the operator $\mathcal{G}_{\star}$. For instance,
we have seen that $\mathcal{A}(\rho)\rho = b_1(\rho) \partial_x^4 \rho + b_2(\rho) \partial_x^3 \rho$, where the
functions $b_i$ only depend on $\rho, \rho_x$ and $\rho_{xx}$, $i = 1,2$. Hence, it follows that
\begin{align*}
e^{\omega t} t^{1 - \mu} &\left\| \mathcal{A} \Big( \psi(\trho(t), \eta) + \rs \Big) (\psi(\trho(t), \eta) + \rs) \right\|_{E_0}\\
&\leq  e^{\omega t} t^{1 - \mu} \left\| \partial_x^4 \psi(\trho(t), \eta) \right\|_{E_0}
\left\| b_1 \big( \psi(\trho(t), \eta) + \rs \big) \right\|_{E_0} \\
&\quad + \, e^{\omega t} t^{1 - \mu} \left\| \partial_x^3 \psi(\trho(t), \eta) \right\|_{E_0}
\left\| b_2 \big( \psi(\trho(t), \eta) + \rs \big) \right\|_{E_0},
\end{align*}
for $t > 0$.
From here, we take advantage of the boundedness of $\psi(\trho(t), \eta)$ in the topology of $F_{1/2}$,
in conjunction with the explicit formulas for $b_i$, in order to bound the terms $\| b_i (\psi(\trho(t), \eta) + \rs) \|_{E_0}$,
uniformly in $t$. Meanwhile, the representation given by Remarks~\ref{Rem:PsiProperties}(d) and
the fact that $\trho \in \mathbb{F}_1(\mathbb{R}_+, \omega)$ yield the bounds 
\[
e^{\omega t} t^{1 - \mu} \| \partial_x^k \psi(\trho(t), \eta) \|_{E_0} = e^{\omega t} t^{1 - \mu} \| \partial_x^k \trho(t) \|_{F_0} \le \| e^{\omega t} \trho \|_{\mathbb{F}_1(\mathbb{R}_+)},
\qquad k = 1, \ldots, 4.
\] 
Analogous methods work for the remaining terms of the function $G_{\star} \big( \psi(\trho(t), \eta) \big),$
since we can always isolate an element of the form $\partial_x^k \psi(\trho(t), \eta),$  and bound the remaining
elements using boundedness in $F_{1/2}$.
We conclude the result by noting that the linear projection $P_0$ adds no complexity to acquiring the 
necessary bounds.
 
Regarding the regularity of $\mathcal{K}$, it can be
shown that $\mathcal{G}_{\star}$ is $C^{\omega}$ via substitution operators and 
the derivative $\partial_t$
and the trace operator $\gamma$ are linear. 
Hence, it follows that
\[
\mathcal{K} \in C^{\omega} \Big( \mathbb{U}, 
\mathbb{F}_0(\mathbb{R}_+, \omega) \times F_{\mu} \Big). \\
\]
Meanwhile, notice that $\mathcal{K}(0,0,0) = (0, 0)$ and 
\[ 
D_1 \mathcal{K}(0,0,0) = \Big( \partial_t - D_1 \mathcal{G}_{\star}(0,0), \gamma  \Big) \in \mathcal{L}_{isom}
\Big(\mathbb{F}_1(\mathbb{R}_+, \omega), \mathbb{F}_0(\mathbb{R}_+, \omega) 
\times F_{\mu} \Big),
\]
by Theorem~\ref{Thm:ProjMaxReg}. Hence, we conclude from the implicit function 
theorem that there exists an open neighborhood 
$0 \in \tilde{U} \subset F_{\mu} \times \mathbb{R}$ and a $C^{\omega}$ mapping 
$\kappa : \tilde{U} \rightarrow \mathbb{F}_1(\mathbb{R}_+, \omega)$ such that 
\[
\mathcal{K}(\kappa(\trho_0, \eta), \trho_0, \eta) = (0, 0) \quad \text{for all} \quad (\trho_0, \eta) \in \tilde{U}.
\]
In particular, $\kappa(\trho_0, \eta)$ is a global solution to \eqref{PASD} with parameter $\eta$ and 
initial data $\trho_0 \in F_{\mu}$, where we assume, without loss of generality, that
$\tilde{U} \subseteq U$.

\medskip

{\bf (ii)} Choose $\varepsilon > 0$ so that for every $r_0 \in \mathbb{B}_{E_{\mu}}(\rs, \varepsilon)$,
there exists $\eta \in (-\rs, \infty)$ for which 
\[
(P_0r_0, \eta) \in \tilde{U} \quad \text{and} \quad F_{\star}(r_0 - \rs; \rs) = F_{\star}(\eta; \rs).
\]
The existence of such a constant $\varepsilon$ is guaranteed by the continuity of $P_0$ and $F_{\star}$,
injectivity of $F_{\star}(\eta; \rs)$ for $\eta \in (- \rs, \infty)$ and the fact that $P_0 \rs = 0$.

Let $r_0 \in \mathbb{B}_{E_{\mu}}(\rs, \varepsilon)$ and fix $\eta = \eta(r_0)$ as mentioned
so that $F_{\star}(r_0 - \rs) = F_{\star}(\eta).$ Define the function
\begin{equation}\label{Eqn:Solution}
r := \psi( \kappa(P_0 r_0, \eta), \eta) + \rs,
\end{equation}
where $\psi(\kappa(P_0 r_0, \eta), \eta)(t) := \psi(\kappa(P_0 r_0, \eta)(t), \eta)$, and 
we will demonstrate that $r$ satisfies the desired properties claimed in the theorem.

To see that $r$ is the unique global solution to \eqref{ASD2} with initial data $r_0$, first
fix $T > 0$ and consider the interval $J := [0, T]$. By the choice of $\varepsilon > 0$ we know that 
$(P_0r_0, \eta) \in \tilde{U}$ and so it follows from part {\bf (i)} above that 
$\kappa(P_0r_0, \eta) \in \mathbb{F}_1(\mathbb{R}_+, \omega)$. From this we see that 
$\kappa(P_0r_0, \eta) \in \mathbb{F}_1(J)$ is a solution to \eqref{PASD} with initial data
$P_0r_0 \in F_{\mu}$. Thus it follows, by Lemma~\ref{Lem:PsiLifts},
that $r \in \mathbb{E}_1(J)$ is the solution on $J$ to the problem \eqref{ASD2} with
initial data 
\[
\psi(P_0r_0, \eta) + \rs = \psi(P_0(r_0 - \rs), \eta) + \rs = r_0,
\]
where we use Remarks~\ref{Rem:PsiProperties}(b) and the fact that $r_0 - \rs \in \mathcal{M}_{\eta}^{4 \mu + \alpha}$.
The claim now follows by the fact that $T > 0$ was arbitrary and by definition of the 
Fr\'echet spaces $C_{1 - \mu}(\mathbb{R}_+, E)$.

To see that $r$ satisfies the exponential bounds in the second part of the claim,
first notice that $\kappa(0, \eta) \equiv 0$ for $\eta \in U_1$. Then,
by Remarks~\ref{Rem:PsiProperties}, and application of the mean value theorem, 
the expression
\begin{align*}
r(t) - &(\rs + \eta) = \psi(\kappa(P_0r_0, \eta)(t), \eta) - \eta = \psi(\kappa(P_0r_0, \eta)(t), \eta) - \psi(\kappa(0, \eta)(t), \eta)\\
&= \Big( P_0 + (1 - P_0) \Big) \Big( \psi(\kappa(P_0r_0, \eta)(t), \eta) - \psi(\kappa(0, \eta)(t), \eta) \Big)\\
&= \kappa(P_0r_0, \eta)(t) + \frac{1}{2 \pi} \int_{\Tone} \Big( \psi(\kappa(P_0r_0, \eta)(t, x), \eta) - \psi(\kappa(0, \eta)(t, x), \eta) \Big) dx\\
&= \kappa(P_0r_0, \eta)(t) + \frac{1}{2 \pi} \int_{\Tone} \int_0^1 D_1 \psi \big( \tau \kappa(P_0r_0, \eta)(t), \eta \big) \kappa(P_0r_0, \eta)(t,x) d\tau dx,
\end{align*}
holds for all $t > 0$.
Notice that
\[
e^{\omega t} t^{1 - \mu} \| \kappa(P_0r_0, \eta)(t) \|_{F_1} \leq \| \kappa(P_0r_0, \eta) \|_{\mathbb{F}_1(\mathbb{R}_+, \omega)}\\
\]
and
\[
\sup_{t \in \mathbb{R}_+} \| e^{\omega t} \kappa(P_0r_0, \eta)(t) \|_{F_{\mu}}
\]
are finite quantities by the fact that $\kappa(P_0r_0, \eta) \in \mathbb{F}_1(\mathbb{R}_+, \omega)$ and the embedding 
\eqref{Eqn:Embedding}. We note that the reference for \eqref{Eqn:Embedding} does not explicitly include the 
unbounded interval $J = \mathbb{R}_+$, however the methods of the proof extend to this 
unbounded case with little trouble. Meanwhile, the remaining term in $r(t) - (\rs + \eta)$ above
is scalar-valued, so we bound $D_1 \psi(\tau \kappa(P_0r_0, \eta)(t), \eta) \kappa(P_0r_0, \eta)(t)$
in the $C(\Tone)$-topology, which is then bounded in the $h^{\sigma}(\Tone)$-topology, for any 
$\sigma \in \mathbb{R}_+ \setminus \mathbb{Z}$. In particular, observe that, by \eqref{Eqn:PsiBound},
\begin{equation*}
\sup_{\trho \in U_0} \| D_1 \psi(\trho, \eta) \kappa(P_0r_0, \eta)(t) \|_{h^{\alpha}} \leq N \| \kappa(P_0r_0, \eta)(t) \|_{h^{\alpha}_0}, \qquad t > 0,
\end{equation*}
and we conclude that the bounds
\begin{equation}\label{Eqn:Bound2}
e^{\omega t} t^{1 - \mu} \| r(t) - (\rs + \eta) \|_{E_1} \leq \Big(1 + c_1 N \Big) \| \kappa(P_0r_0, \eta) \|_{\mathbb{F}_1(\mathbb{R}_+, \omega)}
\end{equation}
and 
\begin{equation}\label{Eqn:Bound3}
e^{\omega t} \| r(t) - (\rs + \eta) \|_{E_{\mu}} \leq \Big( c_2 + c_3 N \Big) \| \kappa(P_0r_0, \eta) \|_{\mathbb{F}_1(\mathbb{R}_+, \omega)},
\end{equation}
hold uniformly for $t > 0$. Here the constant $c_1$ comes from the embedding 
$F_1 \hookrightarrow F_0$, and the constants $c_2$ and $c_3$ come from 
the embeddings \eqref{Eqn:Embedding}. Finally, by the regularity of 
$\kappa$, we may assume that $\tilde{U}$ was chosen sufficiently small to ensure that $D_1 \kappa$ is uniformly
bounded from $\tilde{U}$ into $\mathbb{F}_1(\mathbb{R}_+, \omega).$
Recalling that $\kappa(0, \eta) = 0$, it follows that
\begin{equation}\label{Eqn:Kappa}
\begin{split}
\| \kappa(P_0 r_0, \eta) \|_{\mathbb{F}_1(\mathbb{R}_+, \omega)} &\leq \int_0^1 \left\| D_1 \kappa(\tau P_0 r_0, \eta) P_0 r_0 \right\|_{\mathbb{F}_1(\mathbb{R}_+, \omega)} \; d \tau\\
&\leq \tilde{M} \| P_0 r_0 \|_{F_{\mu}} \leq M \|r_0 - \rs\|_{E_{\mu}},
\end{split}
\end{equation}
where $M := \| P_0 \| \sup_{(\trho, \eta) \in \tilde{U}} \|D_1 \kappa(\trho, \eta) \|_{\mathcal{L}(
F_{\mu}, \mathbb{F}_1(\mathbb{R}_+, \omega))}.$
The claim now follows from \eqref{Eqn:Kappa} and
the inequalities \eqref{Eqn:Bound2}--\eqref{Eqn:Bound3}.
\end{proof}

\begin{remark}
With Theorem~\ref{Thm:Stability} established, we note that the equivolume manifold
$\mathcal{M}^{2 + \alpha} = \mathcal{M}^{2 + \alpha}(\rs) \subset h^{2 + \alpha}(\Tone)$
is a local stable manifold for the cylinder of radius $\rs > 1$. Moreover, these
manifolds foliate the interval $(1, \infty) \subset h^{2 + \alpha}(\Tone)$
with the radius $\rs$ a parameter which separates leaves of the foliation.
\end{remark}

\section{Instability of Cylinders with Radius $0 < \rs < 1$}\label{Sec:Instability}

Taking advantage of the reduced problem \eqref{PASD} and the
connection with
\eqref{ASD},
we proceed with the following result regarding instability of cylinders with
radius $0 < \rs < 1$, in the setting of $F_{\mu}$.
Differences in volume
between the initial data $r_0$ and the cylinder $\rs$ are not a factor
in the following argument, so we assume that the 
parameter $\eta$, associated with the reduced problem \eqref{PASD}, is
simply taken to be zero for this proof.

\begin{thm}\label{Thm:Instability}
Let $\rs \in (0, 1)$ and $\mu \in [1/2,1]$ such that $4 \mu + \alpha \notin \mathbb{Z}$.
Then the equilibrium $\rs$ of \eqref{ASD} is unstable in the topology of $h^{4 \mu + \alpha}(\Tone)$
for initial values in $h^{4 \mu + \alpha}(\Tone)$. More precisely, 
there exists $\varepsilon > 0$ and a sequence of initial values
$(r_n) \subset h^{4 \mu + \alpha}(\Tone)$ such that
\begin{itemize}
	\item[$\bullet$] $\lim_{n \rightarrow \infty} \| r_n - \rs \|_{h^{4 \mu + \alpha}} = 0$, \qquad and 
	\item[$\bullet$] for each $n$, there exists $t_n \in J(r_n)$ so that 
		$\| r(t_n, r_n) - \rs \|_{h^{4 \mu + \alpha}} \ge \varepsilon$.	
\end{itemize}
\end{thm}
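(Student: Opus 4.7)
The plan is to argue by contradiction, using that the linearization of the reduced problem at the zero equilibrium has positive eigenvalues when $\rs < 1$. First I would reduce to instability of the zero equilibrium of the reduced problem \eqref{PASD} with parameter $\eta = 0$, since $\psi(\cdot, 0)$ is a local $C^\omega$-diffeomorphism between an $F_\mu$-neighborhood of $0$ and the equivolume manifold $\mathcal{M}^{4\mu+\alpha}_0$ (Remarks~\ref{Rem:PsiProperties}); an instability sequence $(\trho_0^n) \subset F_\mu$ for \eqref{PASD} then lifts through $r_0^n := \psi(\trho_0^n, 0) + \rs$ to a sequence with the required properties for the cylinder $\rs$ in $h^{4\mu+\alpha}(\Tone)$.

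Since $\rs \in (0,1)$, the spectrum \eqref{Eqn:Spectra} of $A := D_1 \mathcal{G}_{\star}(0, 0)$ contains positive values. Let $\lambda_+ > 0$ denote the largest positive eigenvalue, realized by a real eigenfunction $\varphi \in F_1$ (e.g., $\varphi(x) := \cos(k^\star x)$ with $k^\star \in \mathbb{N}$ maximizing $k \mapsto k^2(1/\rs^2 - k^2)$). Because $A = -\partial_x^2(1/\rs^2 + \partial_x^2)$ is self-adjoint on $L_2(\Tone)$, the splitting of $F_0$ into unstable and stable subspaces would be $L_2$-orthogonal with a positive spectral gap. Writing $g(\trho) := \mathcal{G}_{\star}(\trho, 0) - A\trho$ for the nonlinear remainder, the quasilinear form \eqref{Eqn:ADefined}--\eqref{Eqn:FDefined}, the Banach-algebra structure of $h^{4\mu+\alpha}(\Tone)$, and the embedding $F_\mu \hookrightarrow C^2(\Tone)$ for $\mu \ge 1/2$ should yield the nonlinear estimate $\|g(\trho)\|_{F_0} \le C\|\trho\|_{F_\mu}^2$ for $\trho$ in a small $F_\mu$-neighborhood of $0$.

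The heart of the argument is the following contradiction. Assume for contradiction that $0$ is stable in $F_\mu$, with associated constants $\varepsilon_0, \delta_0 > 0$. For each sufficiently small $s > 0$ I would take $\trho_0^s := s\varphi$ (so that $\|\trho_0^s\|_{F_\mu} < \delta_0$) and let $\trho^s$ denote the corresponding global solution. From Duhamel's formula
\begin{equation*}
\trho^s(t) = s e^{\lambda_+ t}\varphi + \int_0^t e^{(t-\tau)A} g(\trho^s(\tau))\, d\tau,
\end{equation*}
combined with the spectral decomposition of $e^{tA}$ --- exploiting finite dimensionality of the unstable subspace, and the analytic smoothing $\|e^{\tau A_-}\|_{\mathcal{L}(F_0, F_\mu)} \lesssim \tau^{-\mu}e^{-\lambda_- \tau}$ on the stable subspace --- I would run a bootstrap argument to show that, as long as $se^{\lambda_+ \tau}$ stays below a threshold $c_0 > 0$ depending only on intrinsic constants, one has $\|\trho^s(\tau) - se^{\lambda_+ \tau}\varphi\|_{F_\mu} \le C_1 s^2 e^{2\lambda_+ \tau}$, and hence $\|\trho^s(\tau)\|_{F_\mu} \ge \tfrac{1}{2} se^{\lambda_+ \tau}\|\varphi\|_{F_\mu}$.

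The main obstacle will be closing this bootstrap in the $F_\mu$-norm: it is essential that $\lambda_+$ is chosen as the \emph{largest} positive eigenvalue, so that the nonlinear growth factor $e^{2\lambda_+ \tau}$ never outruns the linear factor $e^{\lambda_+ \tau}$ via cross-mode couplings within the unstable subspace. Once the bootstrap is in hand, the exit time $t_s := \lambda_+^{-1}\log(c_0/s)$ is finite, and at this time $\|\trho^s(t_s)\|_{F_\mu} \ge c_0\|\varphi\|_{F_\mu}/2$. For $\varepsilon_0$ chosen strictly smaller than $c_0\|\varphi\|_{F_\mu}/4$ this contradicts the stability bound $\|\trho^s(t)\|_{F_\mu} < \varepsilon_0$. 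Finally, applying this to $s_n := 1/n$ and lifting back through $\psi$ yields the desired sequence $(r_n)$.
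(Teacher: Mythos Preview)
Your high-level strategy---reduce to the zero equilibrium of \eqref{PASD} with $\eta=0$, exploit a positive eigenvalue of $L=D_1\mathcal{G}_\star(0,0)$, and argue by contradiction---matches the paper's. The genuine gap is your nonlinear estimate $\|g(\trho)\|_{F_0}\le C\|\trho\|_{F_\mu}^2$: this fails for $\mu<1$. The remainder $g(\trho)=\mathcal{G}_\star(\trho,0)-L\trho$ still carries the fourth-order principal part, e.g.\ a term of the form $\big(\tfrac{1}{(1+\trho_x^2)^2}-1\big)\partial_x^4\trho$. The embedding $F_\mu\hookrightarrow C^2(\Tone)$ controls only the \emph{coefficients}, not $\partial_x^4\trho$, so the best one gets is $\|g(\trho)\|_{F_0}\le \beta\|\trho\|_{F_1}$ with $\beta\to 0$ as $\|\trho\|_{F_\mu}\to 0$---precisely the paper's estimate \eqref{estimate-g}. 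Your bootstrap then needs $F_1$-control of $\trho(\tau)$, but the semigroup smoothing $\|e^{\tau A_-}\|_{\mathcal{L}(F_0,F_1)}\lesssim \tau^{-1}$ is non-integrable at $\tau=0$, so the Duhamel loop does not close as written.

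The paper resolves this by invoking continuous maximal regularity for $(\omega-L_-)$ to bound $\|e^{-\omega t}\trho\|_{B_{1-\mu}(\mathbb{R}_+,F_1)}$ a priori (estimate \eqref{estimate-stable}), and then---rather than tracking forward growth of an unstable mode---uses the backward representation $P_+\trho(t)=-\int_t^\infty e^{L_+(t-s)}P_+g(\trho(s))\,ds$ to deduce the structural constraint $\|P_+\trho_0\|_{F_\mu}\le C\|P_-\trho_0\|_{F_\mu}$ on every stable initial datum; any nonzero $\trho_0\in P_+(F_0)$ then gives the contradiction. Your forward-tracking route can be salvaged, but you must either (i) replace the $F_\mu$-quadratic bound by the $F_1$-linear one and bring in maximal regularity to close on $\|\trho\|_{F_1}$, or (ii) run the bootstrap for $\mu=1$ (where $F_\mu=F_1$ and the quadratic estimate is legitimate, though the $F_0\to F_1$ Duhamel integral still needs care) and then note that the constructed sequence, built from the smooth eigenfunction $\varphi$, witnesses instability in every weaker $h^{4\mu+\alpha}$-topology as well.
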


\begin{proof}
{\bf (i)}
We begin by showing that $0$ is an unstable equilibrium for the reduced problem \eqref{PASD} centered at $\rs$.
Let $L := D_1 \mathcal{G}_{\star}(0, 0)$ be the linearization of 
$\mathcal{G}_{\star}$ at $\trho = 0$.
We can restate the evolution equation \eqref{PASD} 
in the equivalent form
\begin{equation}
\label{reduced-2}
\begin{cases}
\trho_t - L \trho = g(\trho), &\text{$t > 0$} \\ \trho(0)=\trho_0,
\end{cases}
\end{equation}
where $g(\trho) := \mathcal{G}_{\star}(\trho, 0) - L \trho$.
Using the quasilinear structure of  $[\trho \mapsto \mathcal{G}_{\star}(\trho, 0)]$ 
it is not difficult to see that for every $\beta > 0$ there exists a 
number $\varepsilon_0 = \varepsilon_0(\beta) > 0$ such that
\begin{equation}
\label{estimate-g}
\|g(\trho)\|_{F_0} \leq \beta \| \trho \|_{F_1}, \quad 
\trho \in \mathbb{B}_{F_{\mu}}(0, \varepsilon_0) \cap F_1,
\end{equation}
where we will be assuming throughout that $\trho \in U_0$, 
to guarantee that $\mathcal{G}_{\star}(\trho, 0)$, and subsequently $g(\trho)$,
is defined.
It follows from \eqref{Eqn:Spectra} that
\begin{equation*}
\sigma(L) \cap [{\rm Re} \, z > 0] \ne \emptyset,
\end{equation*}
and we may choose numbers $\omega, \gamma > 0$ such that
\begin{equation*}
[\omega - \gamma \le {\rm Re} \,z \le \omega + \gamma] \cap \sigma(L) = \emptyset
\quad \text{and} \quad
\sigma_+ := [{\rm Re} \, z > \omega + \gamma] \cap \sigma(L) \ne \emptyset \, ,
\end{equation*}
i.e. the strip $[\omega - \gamma \le {\rm Re} \, z \le \omega + \gamma]$ does not intersect
$\sigma(L)$ and there is at least one point of $\sigma(L)$ to the right  of
the line $[{\rm Re} \, z = \omega + \gamma]$.

We define $P_+$ to be the spectral projection, in $F_0$, 
with respect to the spectral set $\sigma_+$,
and let $P_- := 1 - P_+$. 
Then $P_+(F_0)$ is finite dimensional 
and the topological decomposition  
\[
F_0 = P_+(F_0) \oplus P_-(F_0)
\]
reduces $L$, so that $L = L_+ \oplus L_-$,
where $L_\pm$ is the part of $L$ in $P_{\pm}(F_0)$, respectively,
with the domains $D(L_\pm) = P_\pm(F_1)$. 
Moreover, $P_{\pm}$ decomposes $F_1$ by the embedding $F_1 \hookrightarrow F_0$,  
and, without loss of generality, we can take the norm on $F_1$
so that
\[
\| v \|_{F_1} = \| P_+ v \|_{F_1} + \| P_- v \|_{F_1}.
\]
We note that
\begin{equation*}
\sigma(L_-) \subset [{\rm Re} \, z < \omega - \gamma],
\qquad 
\sigma(L_+) = \sigma^+ \subset [{\rm Re} \, z > \omega + \gamma].
\end{equation*}
This implies that there is a constant $M_0 \ge 1$ such that
\begin{equation}
\label{estimate-sg}
\begin{split}
\|e^{L_{-} t} P_{-} \|_{\mathcal{L}(F_0)} &\le M_0 e^{(\omega - \gamma) t}, \\
\|e^{-L_{+}t} P_{+} \|_{\mathcal{L}(F_0)} &\le M_0 e^{-(\omega + \gamma) t}, \qquad t \ge 0,
\end{split}
\end{equation}
where $\{ e^{L_{-}t} : t \ge 0 \}$ is the analytic semigroup in $P_-(F_0)$ generated by $L_{-}$ and
$\{ e^{L_{+}t} : t \in \mathbb{R} \}$ is the group in $P_+(F_0)$
generated by the bounded operator $L_+$.

From \eqref{Eqn:D1PG}--\eqref{Eqn:DG} and \cite[Theorem 5.2]{LeC11} one sees that 
$\big(\mathbb{F}_0(J), \mathbb{F}_1(J) \big)$
is a pair of maximal regularity for $-L$ and 
it is easy to see that $-L_-$ inherits the property of maximal regularity.
In particular, the pair $\big( P_- (\mathbb{F}_0(J)), P_-(\mathbb{F}_1(J)) \big)$
is a pair of maximal regularity for $-L_-$.
In fact, since $type( - \omega + L_{-} ) < - \gamma < 0$ 
we see that $\big( P_-(\mathbb{F}_0(\mathbb{R}_+)), P_-(\mathbb{F}_1(\mathbb{R}_+)) \big)$
is a pair of maximal regularity for $(\omega - L_-)$.
This, in turn, implies the a priori estimate
\begin{equation}
\label{estimate-stable}
\| e^{ - \omega t} w \|_{\mathbb{F}_1(J_T)}
\le M_1 \Big( \| w_0 \|_{F_{\mu}} + \| e^{- \omega t} f \|_{\mathbb{F}_0(J_T)}\Big)
\end{equation}
for $J_T := [0,T]$, any $T \in (0, \infty)$
(or $J_T = \mathbb{R}_+$ for $T = \infty$), 
with a universal constant $M_1 > 0$, where
$w$ is a solution of the linear Cauchy problem
\begin{equation*}
\begin{cases} \dot{w} - L_- w = f,\\ w(0) = w_0, \end{cases}
\end{equation*}
with $(f, w_0) \in \Big( C\big( (0,T), P_- F_0 \big), P_- U_{0} \Big).$ 

\medskip

{\bf (ii)} By way of contradiction, suppose that the equilibrium $0$ is stable for \eqref{PASD}.
Then for every $\varepsilon > 0$ there exists a number $\delta > 0$ such that 
\eqref{reduced-2} admits
for each $\trho_0 \in \mathbb{B}_{F_{\mu}}(0, \delta)$ 
a global solution
\[
\trho = \trho(\cdot, \trho_0) \in C^1_{1 - \mu}(\mathbb{R}_+, F_0) \cap C_{1 - \mu}(\mathbb{R}_+, F_1) \cap C(\mathbb{R}_+, U_0),
\]
which satisfies 
\begin{equation}
\label{less-eps}
\| \trho(t) \|_{F_{\mu}} < \varepsilon, \qquad t \ge 0.
\end{equation}
We can assume without loss of generality that $\beta$ and $\varepsilon$ are chosen such that
\begin{equation}
\label{beta}
2 C_0(M_0 + M_1 \gamma) \beta \le \gamma \qquad \text{and} \qquad \varepsilon \le \varepsilon_0(\beta),
\end{equation}
where $C_0 := \max \{\| P_- \|_{\mathcal{L}(F_0)}, \| P_+ \|_{\mathcal{L}(F_0)} \}$.
As $P_+(F_0)$ is finite dimensional, we may also assume that 
\begin{equation*}
\|P_+ v \|_{F_{\nu}} = \| P_+ v \|_{F_0}, \qquad v \in F_0, \quad \nu \in \{ \mu, 1 \},
\end{equation*} 
where we also use the fact that $P_+ F_0 \subset D(L^n)$ for every $n \in \mathbb{N}$,
c.f. \cite[Proposition A.1.2]{LUN95}.

\medskip

{\bf CLAIM 1:} For any initial value
$\trho_0 \in \mathbb{B}_{F_{\mu}}(0, \delta)$,
$P_+ \trho$ admits the representation
\begin{equation}
\label{P-plus-formula}
P_+ \trho(t) = - \int_t^{\infty} e^{L_+(t - s)} P_+ g( \trho(s) ) \, ds \qquad t \ge 0.
\end{equation}
For this we first establish that, for any $\trho_0 \in \mathbb{B}_{F_{\mu}}(0, \delta)$,
\[
e^{- \omega t} \trho \in BC_{1 - \mu}(\mathbb{R}_+, F_1) := \left\{ u \in C((0, \infty), F_1): \sup_{t \in \mathbb{R}_+} t^{1 - \mu} \| u(t) \|_{F_1} < \infty \right\}.
\]
First notice that the mapping property 
\[
g : \mathbb{F}_1(J_T) \cap C(J_T, U_0) \rightarrow \mathbb{F}_0(J_T), \qquad 0 < T < \infty,
\] 
which follows in the same way as the mapping property derived for $\mathcal{G}_{\star}$
in the proof of Theorem~\ref{Thm:Stability} above, together
with the inequalities \eqref{estimate-g} and \eqref{estimate-stable} yield
\begin{equation}
\label{P-minus-1}
\begin{split}
&\| e^{-\omega t} P_{-} \trho \|_{B_{1 - \mu}(J_T, F_1)}\\
&\le  M_1 \Big( \|P_{-} \trho_0 \|_{F_{\mu}}
+ C_0 \beta \| e^{-\omega t} P_+ \trho \|_{B_{1 - \mu}(J_T, F_1)}
+ C_0 \beta \| e^{-\omega t} P_- \trho \|_{B_{1 - \mu}(J_T, F_1)}
\Big)
\end{split}
\end{equation}
for any $0 < T < \infty$.
Due to \eqref{beta}, we have $M_1 C_0 \beta \le 1/2$ and can further conclude
\begin{equation}
\label{estimate-Pminus-2}
\begin{split}
\| e^{-\omega t} P_{-} \trho \|_{B_{1 - \mu}(J_T, F_1)}
\le  2 M_1 \Big( \| P_{-} \trho_0 \|_{F_{\mu}}
+ C_0 \beta \| e^{-\omega t} P_+ \trho \|_{B_{1 - \mu}(J_T, F_1)}
\Big).
\end{split}
\end{equation}
It follows from \eqref{less-eps} that
\begin{equation*}
t^{1 - \mu} \| e^{-\omega t} P_+ \trho(t) \|_{F_1}
\le t^{1 - \mu} e^{-\omega t} C_0 \| \trho(t) \|_{F_{\mu}} \le C_0 C_1 \varepsilon
\end{equation*}
where $C_1 := \sup \{ t^{1 - \mu} e^{-\omega t} : t \ge 0 \} < \infty$.
Inserting this result into \eqref{estimate-Pminus-2} yields
\begin{equation}
\label{estimate-rho-1}
\| e^{-\omega t} \trho \|_{B_{1 - \mu}(J_T, F_1)}
\le 2 M_1 \| P_{-} \trho_0 \|_{F_{\mu}} + (2 M_1 C_0 \beta + 1) C_0 C_1 \varepsilon \le C_2
\end{equation}
for any $0 < T < \infty$.
However, since $T$ is arbitrary and 
\eqref{estimate-rho-1} is independent of $T$ we conclude that 
$e^{-\omega t} \trho \in BC_{1 - \mu}(\mathbb{R}_+, F_1)$, for any initial value 
$\trho_0 \in \mathbb{B}_{F_{\mu}} (0, \delta)$.
Next we note that, for $s \ge t$, by \eqref{estimate-sg}
\begin{equation}
\label{estimate-integral-1}
\begin{split}
\| e^{L_+(t-s)} P_+ g(\trho(s)) \|_{F_0}
&\le M_0 C_0 \beta e^{(\omega + \gamma)(t-s)} \| \trho(s) \|_{F_1}\\
&\le M_0 C_0 \beta e^{\omega t} e^{\gamma (t-s)} s^{\mu - 1} \| e^{-\omega s} \trho \|_{B_{1 - \mu}(\mathbb{R}_+, F_1)},
\end{split}
\end{equation}
which shows that the integral in \eqref{P-plus-formula} exists for any $t \ge 0$,
with convergence in $F_1$.
Moreover, 
\begin{equation}
\label{estimate-integral-2}
\begin{split}
\left\| \int_t^{\infty} e^{L_+ (t-s)} P_+ g(\trho(s)) \, ds \right\|_{F_0}
\le e^{\omega t} M_0 C_0 C_3 \beta \| e^{-\omega t} \trho \|_{B_{1 - \mu}(\mathbb{R}_+, F_1)},
\end{split}
\end{equation}
where $C_3 := \sup \big\{ \int_t^{\infty} e^{\gamma (t-s)} s^{\mu - 1} \, ds : t \ge 0 \big\} < \infty$.
Noting that $w = P_+ \trho$ solves the Cauchy problem
\begin{equation*}
\begin{cases} \dot{w} - L_+ w = P_+ g(\trho),\\ w(0) = P_+ \trho_0, \end{cases}
\end{equation*}
it follows from the variation of parameters formula that, for $t \ge 0$ and $\tau > 0$,
\begin{equation*}
\label{plus-unstable}
P_+ \trho(t) =
e^{L_+ (t - \tau)} P_+ \trho(\tau) + \int_{\tau}^t e^{ L_+(t - s)} P_+ g(\trho(s)) \, ds.
\end{equation*}
Since this representation holds for any $\tau > 0$, the claim 
follows from \eqref{estimate-sg} and \eqref{less-eps} by sending $\tau$ to $\infty$.

\medskip

{\bf CLAIM 2:} For any $\trho_0 \in \mathbb{B}_{F_{\mu}}(0, \delta)$
it must hold that
\[
\| P_+ \trho_0 \|_{F_{\mu}} \leq 2 M_0 M_1 C_3 \| P_- \trho_0 \|_{F_{\mu}}.
\]
From \eqref{P-plus-formula} and \eqref{estimate-integral-1} follows
\begin{equation}
\label{P-plus-2}
\begin{split}
&\| e^{-\omega t} P_+ \trho \|_{B_{1 - \mu}(\mathbb{R}_+, F_0)}\\
&\le \frac{M_0 C_0 \beta}{\gamma} 
\Big( \| e^{-\omega t} P_+ \trho \|_{B_{1 - \mu}(\mathbb{R}_+, F_1)}
+ \| e^{-\omega t} P_- \trho \|_{B_{1 - \mu}(\mathbb{R}_+, F_1)} \Big)
\end{split}
\end{equation}
where we have used the fact that
$\sup_{t \ge 0} \big\{ t^{1 - \mu} \int_t^{\infty} e^{\gamma (t-s)} s^{\mu - 1} \, ds \big\} \le 1/\gamma$.
Adding the estimates in \eqref{P-minus-1} and \eqref{P-plus-2} and 
employing \eqref{beta} yields
\begin{equation}
\label{B}
\|e^{-\omega t} \trho \|_{B_{1 - \mu}(\mathbb{R}_+, F_1)} \le 2 M_1 \| P_- \trho_0 \|_{F_{\mu}}.
\end{equation}
The representation \eqref{P-plus-formula} in 
conjunction with \eqref{estimate-integral-2} and \eqref{B} then implies
\begin{equation}
\begin{split}
\|P_+ \trho_0 \|_{F_{\mu}}
\le M_0 C_0 C_3 \beta \| e^{-\omega t} \trho \|_{B_{1 - \mu}(\mathbb{R}_+, F_1)}
 \le M_0 C_3 \|P_- \trho_0 \|_{F_\mu},
\end{split}
\end{equation}
where the last inequality follows from the fact that $2 C_0 M_1 \beta \le 1$.
We have thus demonstrated the claim. 

\medskip

Notice that the preceding claim 
contradicts the stability assumption. In particular,
if $\trho_0 \in \mathbb{B}_{F_{\mu}}(0, \delta)$, $\trho_0\neq 0$, is chosen such that
$P_- \trho_0 = 0$, then it must hold that $P_+ \trho_0 = 0$, and hence $\trho_0 = 0$,
leading to a clear contradiction.
In particular, we conclude that there exists $\tilde{\varepsilon} > 0$ and a sequence 
$(\trho_n) \subset F_{\mu}$ such that $\trho_n \rightarrow 0$ and the solution $\trho(\cdot, \trho_0)$
satisfies $\| \trho(t_n, \trho_n) \|_{F_\mu} \ge \tilde{\varepsilon}$
for some $t_n \in J(\trho_n)$.

\medskip

{\bf (iii)} The result now follows by application of the projection $P_0$ to perturbations of 
the cylinder $\rs$. In particular, stability of the $\rs$ for \eqref{ASD} would necessarily
imply stability of 0 for \eqref{PASD}, which contradicts the conclusion of parts {\bf (i)} and {\bf(ii)}.
\end{proof}

\begin{remark}
With explicit knowledge of eigenvalues for the linearization, we can also conclude existence of stable,
unstable, and center manifolds for \eqref{ASD} about cylinders $\rs \in (0,1)$.
See \cite{DPL88,Sim95} for existence of such manifolds for nonlinear parabolic problems with
continuous maximal regularity. The characterization of the unstable manifolds is a very interesting
open question, especially regarding investigation of pinch--off behavior.
\end{remark}

\section{Bifurcation Results}\label{Sec:Bifurcation}

In this section we turn our attention 
to interactions between the family of cylinders and the family of unduloids. We have already seen
that the radius $\rs = 1$ plays a critical role in the dynamics of the cylinders. The change of stability 
for cylinders above and below this critical radius suggests that there is a bifurcation at $\rs = 1$. Indeed, 
we will confirm this bifurcation, using results of Crandall and Rabinowitz \cite{CR71}, and investigate  
properties of the bifurcation. Herein we take the parameter $\lambda := 1 / \rs$ as our 
bifurcation parameter, $\rs > 0$. 

From the reductions developed in Section~\ref{Sec:Stability}, it suffices to study 
the bifurcation equation
\begin{equation}\label{Eqn:Bifurcation}
\bG(\trho, \lambda) := \mathcal{G}_{\star}(\trho, 0) = P_0 G( \psi( \trho) + \rs) = 0, \qquad \lambda = 1 / \rs,
\end{equation}
in the setting of $(\trho, \lambda) \in F_1 \times (0, \infty)$, where we use $\psi(\trho) := \psi(\trho, 0)$
to economize notation. Recalling the explicit characterization \eqref{Eqn:Spectra}, 
we note that the eigenvalues of $D_1 \mathcal{G}_{\star}(0, 0)$ all have multiplicity two in the setting of 
$F_1$, regardless of the value of the parameter $\rs$. 
The techniques of 
\cite{CR71}, where the authors derive results for operators with simple 
eigenvalues, are not directly applicable in this setting. We may choose at this point to employ more 
general bifurcation results for high dimensional kernels, such as the results contained in 
\cite[Section I.19]{KIE12}, or we can simplify our setting 
to make accessible the results of \cite{CR71}. 

Whether we choose to simplify our setting or use higher dimensional bifurcation
results, we can make good use of the following observation. Due to the periodicity enforced
in the problem, the set of equilibria of \eqref{ASD} is invariant under shifts along the axis of 
rotation. More precisely, considering the translation operators $T_a,$ discussed in the proof of 
Proposition~\ref{Prop:SolutionRegularity} in the appendix, one can easily verify that $G(T_a \bar{r}) = 0$ if and 
only if $G(\bar{r}) = 0$, $a \in \mathbb{R}$. Obviously, this invariance carries over to the 
reduced problem \eqref{PASD} and subsequently to the bifurcation equation \eqref{Eqn:Bifurcation}.

One can take advantage of this shift invariance of equilibria in the context of bifurcation 
with high dimensional kernels
by constructing a two dimensional bifurcation parameter $\tilde{\lambda} = (1/\rs, a)$ and eventually observes 
two dimensional bifurcating \emph{surfaces} of equilibria in $F_1$, 
c.f. \cite[Theorem I.19.2 and Remark I.19.7]{KIE12}. 
On the other hand, we will make use of this invariance to simplify the setting in which 
we are looking for equilibria and make accessible the methods of Crandall and Rabinowitz for operators
with simple eigenvalues. The specific simplification that we apply to our setting has also 
been employed by Escher and Matioc \cite{EM11} and is supported by 
the following proposition which allows us to consider the class
\[
F_{1, {\sf even}} := h^{4 + \alpha}_{0,{\sf even}}(\Tone)
\]
of functions which are even, i.e. symmetric about $[x = 0]$, and $h^{4 + \alpha}_0$ regular.

\begin{prop}\label{Prop:Even}
For every equilibrium $\brho$ of \eqref{PASD}, there exists $x_0 = x_0(\brho) \in \Tone$ for which
the translation $T_{x_0} \brho$ is in the space $F_{1,{\sf even}} := h^{4 + \alpha}_{0,{\sf even}}(\Tone)$
of even functions on $\Tone$ in the class $F_1$. I.e. up to translations on $\Tone$,
all equilibria of \eqref{PASD} are even.
\end{prop}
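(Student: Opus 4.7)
The plan is to reduce evenness of equilibria of \eqref{PASD} to the explicit reflection symmetry of the undulary parametrization \eqref{Eqn:Unduloids}. First, I would apply Proposition~\ref{Prop:Equilibria} with $\eta = 0$: $\brho \in F_1$ is an equilibrium of \eqref{PASD} precisely when the lifted profile $\barr := \psi(\brho) + \rs$ satisfies $G(\barr) = 0$ on $\Tone$, which by the Lyapunov computation in Section~\ref{Sec:Equilibria} is equivalent to $\mathcal{H}(\barr)$ being constant. Theorem~\ref{Thm:Equilibria} together with Remarks~\ref{Rem:Equilibria} then forces $\barr$ to be either the trivial cylinder $\rs$ (in which case $\brho = 0$ is already even) or an undulary $R(B,k)$ with $-1 < B < 1$, $B \ne 0$, and $\mathcal{H} = \mathcal{H}(B)$ constrained by \eqref{Eqn:HandBRelation}.

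In the undulary case I would extract evenness from the manifest symmetry of \eqref{Eqn:Unduloids} about the parameter value $s_0 := \pi/(2\mathcal{H})$. Using $\sin(\pi - \mathcal{H}s) = \sin(\mathcal{H}s)$ together with the substitution $t \mapsto \pi/\mathcal{H} - t$ in the integral defining the $x$-component, one obtains
\[
x(\pi/\mathcal{H} - s) = -x(s), \qquad y(\pi/\mathcal{H} - s) = y(s).
\]
Because $|B| < 1$ makes the integrand strictly positive, the map $s \mapsto x(s)$ is a diffeomorphism, so $R(B,k)$ is the graph of a function $\barr(x)$, and the identities above yield $\barr(-x) = \barr(x)$ once the origin of the $x$-axis is placed at the image of $s_0$ (an extremum of $y$). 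Translating by some $x_0 \in \Tone$ therefore makes $T_{x_0}\barr$ even.

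To transfer evenness from $\barr$ back to $\brho$, I would use the representation $\psi(\trho,0) = \trho + c(\trho)$ from Remarks~\ref{Rem:PsiProperties}(f), where the scalar $c(\trho) \in \mathbb{R}$ is uniquely determined by the volume constraint $F_\star(\psi(\trho)) = F_\star(0)$. Both $P_0$ and $F_\star$ commute with the shift operators $T_a$ on $\Tone$, so uniqueness of the lift in the implicit function theorem gives $c(T_a\trho) = c(\trho)$, and hence $\psi \circ T_a = T_a \circ \psi$. Consequently $T_{x_0}\psi(\brho) = \psi(T_{x_0}\brho) = T_{x_0}\brho + c$ is even; subtracting the constant (which vanishes since $T_{x_0}\brho \in F_1$ has zero mean) shows $T_{x_0}\brho \in F_{1,{\sf even}}$.

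The only real bookkeeping point is the equivariance $\psi \circ T_a = T_a \circ \psi$ used in the last step; everything else is essentially reading off the symmetry directly from \eqref{Eqn:Unduloids} and invoking the Delaunay--Kenmotsu classification.
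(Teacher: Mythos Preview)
Your proposal is correct and follows essentially the same route as the paper: lift via Proposition~\ref{Prop:Equilibria}, invoke the Delaunay--Kenmotsu classification, and read off the reflection symmetry of \eqref{Eqn:Unduloids} about $s_0=\pi/(2\mathcal{H})$ (where $x(s_0)=0$). The only bookkeeping difference is in the transfer back to $\brho$: instead of establishing the equivariance $\psi\circ T_a = T_a\circ\psi$, the paper uses Remarks~\ref{Rem:PsiProperties}(a), i.e.\ $P_0\psi(\brho)=\brho$, so that $T_{x_0}\brho = P_0 R(\cdot\,;\mathcal{H},B)$ directly, and $P_0$ trivially preserves evenness. This sidesteps any question of whether the implicit-function neighborhoods $U_0,U_2$ are translation-invariant; your equivariance argument is fine in substance (since $c(\trho)$ depends only on the translation-invariant quantity $F_\star$), but the $P_0$ route is the cleaner way to close the loop.
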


\begin{proof}
From Remarks~\ref{Rem:Equilibria} and Proposition~\ref{Prop:Equilibria}, 
we know that $\brho$ must correspond to the projection of an 
undulary curve $R(\mathcal{H},B)$, modulo translations along the $x$--axis. Choose 
$x_0 \in \Tone$ so that $T_{x_0} \brho = P_0 R(\cdot \, ; \mathcal{H},B)$ and 
one readily verifies that $R(\cdot \, ; \mathcal{H},B)$ is symmetric about $s = \pi / 2 \mathcal{H}$.
The claim follows from $x(\pi / 2 \mathcal{H}) = 0$.
\end{proof}

From this observation, we see that there is no loss of generality if we focus our bifurcation analysis
on the setting of $\trho \in F_{1,{\sf even}}$. One benefit of 
working in this setting is that we have the Fourier series representation
\[
\trho(x) = \sum_{k \geq 1} a_k \cos(kx), \quad \{ a_k \} \subset \mathbb{R} \qquad \text{for all} 
\quad \trho \in F_{1,{\sf even}} \,.
\]
We are now prepared to prove our first bifurcation result.

\begin{thm}[Bifurcation of Reduced Problem]\label{Thm:Bifurcation}
For every $\ell \in \mathbb{N}$, $(0, \ell) \in h^{4 + \alpha}_{0,{\sf even}}(\Tone) \times (0, \infty)$
is a bifurcation point for the equation \eqref{Eqn:Bifurcation}. In particular, there exists a positive
constant $\delta_{\ell} > 0$ and a nontrivial analytic curve
\begin{equation}\label{Eqn:BifurcatingBranch}
\left\{ (\trho_{\ell}(s), \lambda_{\ell}(s)) \in h^{4 + \alpha}_{0,{\sf even}} \times \mathbb{R} : s \in (- \delta_{\ell}, \delta_{\ell}), (\trho_{\ell}(0), \lambda_{\ell}(0)) = (0, \ell) \right\},
\end{equation}
such that 
\[
\bG(\trho_{\ell}(s), \lambda_{\ell}(s)) = 0 \qquad \text{for all} \quad s \in (-\delta_{\ell}, \delta_{\ell}),
\]
and all solutions of \eqref{Eqn:Bifurcation} in a neighborhood of $(0, \ell)$ are either a
trivial solution $(0, \lambda)$ or an element of the nontrivial curve \eqref{Eqn:BifurcatingBranch}.
Moreover, if $\lambda \in (0, \infty) \setminus \mathbb{N}$, then $(0, \lambda)$ is not a 
bifurcation point for \eqref{Eqn:Bifurcation}.
\end{thm}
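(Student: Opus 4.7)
The plan is to apply the classical Crandall--Rabinowitz bifurcation from a simple eigenvalue theorem \cite{CR71} to the map
\[
\bG : U_{0,{\sf even}} \times (0,\infty) \longrightarrow h^\alpha_{0,{\sf even}}(\Tone),
\]
where $U_{0,{\sf even}}$ is a suitable open neighborhood of $0$ in $F_{1,{\sf even}}$, and to then extract the non--bifurcation statement from the implicit function theorem. First I would verify that $\bG$ actually takes values in the even subspace: if $\trho$ is even then the explicit formula for $G$ together with the chain of parities ($\rho_x$ odd, $\rho_{xx}$ even, $\mathcal{H}(\rho+\rs)$ even, $\partial_x\mathcal{H}$ odd) shows $G(\psi(\trho)+\rs)$ is even, and $P_0$ preserves evenness. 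Analyticity of $\bG$ on $U_{0,{\sf even}}\times(0,\infty)$ follows from Lemma~\ref{Lem:AandFReg} together with analyticity of $\psi$, while $\bG(0,\lambda)=P_0 G(\rs)=0$ for every $\lambda>0$ since the cylinder is an equilibrium.

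Next I would analyze the linearization along the trivial branch. From \eqref{Eqn:D1PG}--\eqref{Eqn:DG} one has, with $\lambda=1/\rs$,
\[
D_1\bG(0,\lambda)=-\partial_x^2\bigl(\lambda^2+\partial_x^2\bigr)\big|_{F_{1,{\sf even}}},
\]
which acts diagonally on the Fourier basis $\{\cos(kx)\}_{k\ge 1}$ of $F_{1,{\sf even}}$ by
\[
D_1\bG(0,\lambda)\cos(kx)=k^2(\lambda^2-k^2)\cos(kx).
\]
At $\lambda=\ell$ the kernel is therefore the one--dimensional span of $v_\ell(x):=\cos(\ell x)$, and the range is the closed codimension--one subspace
\[
\mathcal{R}_\ell:=\overline{\operatorname{span}}\{\cos(kx):k\in\mathbb{N},\ k\ne\ell\}\subset h^\alpha_{0,{\sf even}}(\Tone).
\]
Compactness of the resolvent of $D_1\bG(0,\ell)$ (inherited from the embedding $F_1\hookrightarrow F_0$, as in Section~\ref{Sec:Stability}) guarantees the Fredholm index $0$ condition required by Crandall--Rabinowitz.

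The transversality condition is then a direct computation: differentiating the eigenvalue relation in $\lambda$ gives
\[
\partial_\lambda D_1\bG(0,\lambda)\cos(\ell x)=2\ell^2\lambda\cos(\ell x),
\]
so at $\lambda=\ell$ one obtains $D_\lambda D_1\bG(0,\ell)v_\ell=2\ell^3\cos(\ell x)$, which is a nonzero multiple of $v_\ell$ and hence does not lie in $\mathcal{R}_\ell$. Crandall--Rabinowitz now yields an analytic curve of nontrivial solutions of the form \eqref{Eqn:BifurcatingBranch}, together with the local dichotomy that every zero of $\bG$ in a neighborhood of $(0,\ell)$ is either trivial or lies on this curve; analyticity of the branch is obtained via the analytic version of the theorem since $\bG\in C^\omega$. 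Finally, for $\lambda\in(0,\infty)\setminus\mathbb{N}$ the formula above shows that $k^2(\lambda^2-k^2)\ne 0$ for all $k\ge 1$, so $D_1\bG(0,\lambda)$ is a bijective Fredholm operator on $F_{1,{\sf even}}$ and the implicit function theorem implies $(0,\lambda)$ has a neighborhood in which the only zeros of $\bG$ are trivial; thus such $\lambda$ are not bifurcation points.

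The only genuinely delicate point will be the reduction to the even subspace. In the full space $F_1$ the eigenvalue at $\lambda=\ell$ is double (spanned by $\cos(\ell x)$ and $\sin(\ell x)$), so Crandall--Rabinowitz does not apply directly; the translation invariance of the equilibria recorded in Proposition~\ref{Prop:Even} is what collapses this two--dimensional kernel to a one--dimensional one, and must be invoked carefully to conclude that the even--function branch captures all nontrivial equilibria modulo translation.
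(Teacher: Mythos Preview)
Your proposal is correct and follows essentially the same route as the paper: compute $D_1\bG(0,\lambda)$ as the Fourier multiplier $k^2(\lambda^2-k^2)$ on the cosine basis, use compactness of the resolvent to get Fredholm index zero at $\lambda=\ell$, verify the transversality condition $D_\lambda D_1\bG(0,\ell)v_\ell=2\ell^3\cos(\ell\cdot)\notin R_\ell$, and apply Crandall--Rabinowitz, with the non--bifurcation statement for $\lambda\notin\mathbb{N}$ coming from bijectivity of the multiplier. Your additional remarks on why $\bG$ preserves the even subspace and on the role of Proposition~\ref{Prop:Even} in collapsing the two--dimensional kernel are welcome elaborations but not departures from the paper's argument.
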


\begin{proof}
We first note that bifurcation can only occur at points $(0, \lambda)$ for which $D_1 \bG(0, \lambda)$ is
not bijective. We can see from \eqref{Eqn:D1PG}--\eqref{Eqn:DG} that
\begin{equation}\label{Eqn:D1bG}
D_1 \bG(0, \lambda) = - \partial_x^2 \left( \lambda^2 + \partial_x^2 \right) \Big|_{F_{1,{\sf even}}},
\end{equation}
which is realized as a Fourier multiplier with the symbol
\[
\big( M_k \big)_{k \in \mathbb{N}} = \big( k^2 ( \lambda^2 - k^2 ) \big)_{k \in \mathbb{N}} \, ,
\]
and we see that the operator is bijective whenever $\lambda \in (0, \infty) \setminus \mathbb{N}$.
Hence, it follows that bifurcation can only occur at points of the form $(0, \ell)$, $\ell \in \mathbb{N}$. 

Now fix $\ell \in \mathbb{N}$ and we proceed to verify that $(0, \ell)$ is indeed 
a bifurcation point for \eqref{Eqn:Bifurcation}.
By compactness of the resolvent $R(\lambda) := (\lambda - DG_{\star}(0))^{-1}$, 
$\lambda \in \rho(DG_{\star}(0))$, it follows that 
$D_1 \bG(0, \ell)$ is a Fredholm operator of index zero.
Further, we see that
\[
\begin{split}
N_{\ell} &:= N(D_1 \bG(0, \ell)) = \text{span}\{ \cos(\ell x) \},\\ 
R_{\ell} &:= R(D_1 \bG(0, \ell)) = \overline{\text{span}} \left\{ \cos(k x): k \ge 1, k \not= \ell \right\},
\end{split}
\]
where $N(B)$ and $R(B)$ denote the kernel and the range, respectively, of the operator $B$.
Since $h^{\sigma}(\Tone) \hookrightarrow L_2(\Tone)$, we can borrow the $L_2$-inner product
to realize $N_{\ell}$ as a topological complement to $R_{\ell}$ as subspaces of $F_{1,{\sf even}}$.
Meanwhile, following from \eqref{Eqn:D1bG}, we compute the mixed derivative 
\begin{equation}\label{Eqn:D21bG}
D_2 D_1 \bG(0, \ell) = - 2  \ell  \; \partial_x^2 \Big|_{F_{1,{\sf even}}}.
\end{equation}
Now take $\hat{v}_0 := \cos(\ell \; \cdot) \in N_{\ell}$ and observe that
\[
D_2 D_1 \bG(0, \ell) \hat{v}_0 = 2 \ell^3 \, \cos(\ell \, \cdot) \notin R_{\ell} \,,
\]
from which the result follows by \cite[Theorem 1.7]{CR71}, 
or \cite[Theorem I.5.1]{KIE12}.
\end{proof}

\begin{remark}
Following from the previous result, we are able to track the behavior of the so--called 
\emph{critical eigenvalue} $\mu_{\ell}(\lambda)$ of the linearization $D_1 \bG(0, \lambda)$ 
about the trivial equilibria $(0, \lambda)$. In particular, we choose $\mu_{\ell}(\lambda)$
to be the eigenvalue of $D_1 \bG(0,\lambda)$ which passes through 
0 with non--vanishing speed at $\lambda = \ell$, the existence of $\mu_{\ell}(\lambda)$ is 
guaranteed by the bifurcation observed above, c.f. \cite[Section I.6~and~I.7]{KIE12}. 
Moreover, employing eigenvalue perturbation techniques, we can also track the 
associated perturbed eigenvalue $\hat{\mu}_{\ell}(s)$ of the linearization 
$D_1 \bG(\trho_{\ell}(s), \lambda_{\ell}(s))$ about the nontrivial equilibria. These 
eigenvalues will play a crucial role in the following instability results for the branches
of bifurcating equilibria.
\end{remark}

\begin{thm}\label{Thm:BifurcType}
Each of the bifurcations established in Theorem~\ref{Thm:Bifurcation} is a
subcritical pitchfork type bifurcation. More precisely, for all $\ell \in \mathbb{N}$,
we have 
\[
\dot{\lambda}_{\ell}(0) = 0 \quad \text{and} \quad \ddot{\lambda}_{\ell}(0) < 0,
\]
where ``$\; \, \dot{ } \;$'' denotes the derivative with respect to the parameter $s$. Moreover, 
it holds that the perturbed eigenvalues $\hat{\mu}_{\ell}(s)$ are strictly positive for
$|s| > 0$ chosen sufficiently small.
\end{thm}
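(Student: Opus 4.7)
The plan is to combine the Crandall--Rabinowitz machinery with the Delaunay characterization of equilibria (Theorem~\ref{Thm:Equilibria}) to extract the Taylor expansion of $\lambda_\ell(s)$ at $s=0$, and then to invoke the principle of exchange of stability to recover the sign of the perturbed eigenvalue.

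First I would establish $\dot\lambda_\ell(0) = 0$. Writing $\trho_\ell(s) = sv_0 + s^2 w_2 + O(s^3)$ with $v_0 = \cos(\ell\,\cdot)$, normalized so that $w_2 \perp v_0$ in $L_2(\Tone)$, and $\lambda_\ell(s) = \ell + s\lambda_1 + s^2\lambda_2 + O(s^3)$, I insert these series into $\bG(\trho_\ell(s), \lambda_\ell(s)) = 0$. Since $\bG(0,\lambda) \equiv 0$ for all $\lambda$, all pure $\lambda$-derivatives of $\bG$ at $\trho = 0$ vanish, so the order $s^2$ equation reads
\[
D_1\bG_0\, w_2 + \lambda_1\, D_2 D_1\bG_0\, v_0 + \tfrac{1}{2} D_1^2\bG_0[v_0,v_0] = 0,
\]
with $\bG_0 := \bG(0,\ell)$. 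Testing against $v_0$ in $L_2(\Tone)$, the key observation is that $\bG$ preserves evenness and that quadratic interactions of $\cos(\ell x)$ produce only the Fourier modes $1$ (killed by $P_0$) and $\cos(2\ell x)$, so $D_1^2\bG_0[v_0,v_0] \in \mathrm{span}\{\cos(2\ell\,\cdot)\}$ is $L_2$-orthogonal to $v_0$. Combined with $D_2 D_1\bG_0\,v_0 = 2\ell^3 v_0 \neq 0$ from \eqref{Eqn:D21bG}, this forces $\lambda_1 = 0$.

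Rather than solving for $w_2$ and executing the order $s^3$ solvability condition directly, I would identify $\lambda_\ell(s)$ through the Delaunay parametrization. By Proposition~\ref{Prop:Equilibria} and Theorem~\ref{Thm:Equilibria}, every nontrivial zero of $\bG$ near $(0,\ell)$ corresponds to an undulary $R(B,\ell)$ with $|B|$ small, the reference radius $\rs = 1/\lambda$ being selected so that the enclosed volume of the undulary matches that of the cylinder. Expanding the integrand of \eqref{Eqn:HandBRelation} to second order gives
\[
\frac{1+B\sin t}{\sqrt{1+B^2+2B\sin t}} = 1 - \tfrac{1}{2} B^2 \cos^2 t + O(B^3),
\]
whence $\mathcal{H}(B) = \ell\bigl(1 - B^2/4 + O(B^3)\bigr)$. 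Expanding the profile $r(x)$ from \eqref{Eqn:Unduloids} in $B$ yields $r(x) = \mathcal{H}^{-1}\bigl[1 + B\cos(\mathcal{H}x) + \tfrac{B^2}{4}(1 - \cos(2\mathcal{H}x)) + O(B^3)\bigr]$; matching $(2\pi)^{-1}\int_0^{2\pi} r^2\, dx$ to $\rs^2$ produces $\rs^2 = \mathcal{H}^{-2}(1 + B^2 + O(B^3))$, so $\lambda = 1/\rs = \ell\bigl(1 - 3B^2/4 + O(B^3)\bigr)$. Reading off the leading Fourier coefficient of $\trho = P_0 r$ gives the normalization $s = B/\ell + O(B^2)$, which inverts to $B = \ell s + O(s^2)$. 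Substituting yields $\lambda_\ell(s) = \ell - \tfrac{3\ell^3}{4}s^2 + O(s^3)$ and therefore $\ddot\lambda_\ell(0) = -\tfrac{3\ell^3}{2} < 0$. For the perturbed eigenvalue, the eigenvalue of $D_1\bG(0,\lambda)$ crossing $0$ at $\lambda=\ell$ is $\mu_\ell(\lambda) = \ell^2(\lambda^2 - \ell^2)$ with $\mu_\ell'(\ell) = 2\ell^3 > 0$; by the principle of exchange of stability (Kielh\"ofer, Theorem~I.7.3), $\hat\mu_\ell(s) = -\ddot\lambda_\ell(0)\mu_\ell'(\ell)\,s^2 + o(s^2) = 3\ell^6\, s^2 + o(s^2)$, which is strictly positive for $|s|$ small and nonzero.

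The main obstacle is the bookkeeping in the Delaunay expansion: consistently imposing the enclosed-volume constraint to obtain $\rs(B)$ and justifying that the formal $B$-expansion actually parametrizes the abstract bifurcating curve in the function-space sense, so that the change of variables $B \leftrightarrow s$ converts the two expansions correctly. An equivalent but algebraically unpleasant alternative would be to compute $D_1^2\bG_0[v_0, w_2]$ and $D_1^3\bG_0[v_0,v_0,v_0]$ directly from \eqref{Eqn:ADefined}--\eqref{Eqn:FDefined} and extract $\lambda_2$ from the order $s^3$ solvability condition.
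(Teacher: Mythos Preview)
Your proposal is correct, and it takes a genuinely different route from the paper. The paper's proof is a one-line reference to the abstract Lyapunov--Schmidt formulas in Kielh\"ofer, Sections~I.6--I.7, which means computing $\dot\lambda_\ell(0)$ and $\ddot\lambda_\ell(0)$ directly from $D_1^2\bG_0[v_0,v_0]$, $D_1^3\bG_0[v_0,v_0,v_0]$ and $D_1^2\bG_0[v_0,w_2]$ --- precisely the ``algebraically unpleasant alternative'' you mention at the end. You instead exploit the Delaunay classification (Theorem~\ref{Thm:Equilibria}) together with the uniqueness clause in Theorem~\ref{Thm:Bifurcation} to identify the abstract Crandall--Rabinowitz branch with the explicit family $R(B,\ell)$, and then read off $\lambda_\ell(s)$ from the periodicity relation \eqref{Eqn:HandBRelation} and the volume constraint. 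This buys you an explicit value $\ddot\lambda_\ell(0)=-\tfrac{3\ell^3}{2}$ with almost no differentiation of $\bG$, at the cost of the arc-length-to-$x$ bookkeeping and the change of variables $B\leftrightarrow s$ (which you correctly flag as the main obstacle; the uniqueness in Theorem~\ref{Thm:Bifurcation} is exactly what justifies it). One small caution: your formula for $r(x)$ conflates the arc-length argument $\mathcal H s'$ with $\mathcal H x$; these differ at order $B^2$, but the discrepancy lives in the $\cos(2\ell\,\cdot)$ mode and does not affect the volume integral or the leading Fourier coefficient you actually use, so your final answers survive. The exchange-of-stability step is identical in both approaches.
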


\begin{proof}
Utilizing the methods of \cite[Section I.6 and I.7]{KIE12}, and the techniques
developed in the previous sections of the paper, one can explicitly verify that the bifurcations 
observed above are indeed subcritical pitchfork bifurcations. 
The result for the perturbed eigenvalues now follows from the eigenvalue perturbation
techniques in \cite[Section I.7]{KIE12}, see also Amann \cite[Section 27]{AM90}.
\end{proof}

With these bifurcation results established in the setting of the reduced problem, we
will now go about deriving results for the original problem \eqref{ASD}.
Recalling the definition of the operator $G_{\star}$ from Section~\ref{Section:Definitions}, we introduce the notation
\[
G(\rho, \lambda) := G(\rho + 1 / \lambda) = G_{\star}(\rho), \qquad \text{for} \quad \lambda = 1 / \rs.
\]
We are now interested in finding solutions to the bifurcation equation
\begin{equation}\label{Eqn:FullBifurc}
G(\rho, \lambda) = 0, \qquad (\rho, \lambda) \in h^{4 + \alpha}(\Tone) \times (0,\infty),
\end{equation}
associated with the full problem \eqref{ASD}.

We begin analyzing \eqref{Eqn:FullBifurc} by \emph{lifting} the
bifurcation results already established for the reduced problem.
We make use of the connections established in Section~\ref{Section:ReducedProblem}
and we also establish the following connection between the 
eigenvalues of $D_1 \bG$ and $D G(\cdot, \lambda)$ at equilibria.

\begin{prop}\label{Prop:Eigen}
Suppose $\bG(\trho, \lambda) = 0$ and $\mu \ne 0$. Then 
\[
\mu \text{ is an eigenvalue for } D_1 \bG(\trho, \lambda) \quad \Longleftrightarrow \quad \mu \text{ is an eigenvalue for } D_1 G(\psi(\trho), \lambda).
\]
\end{prop}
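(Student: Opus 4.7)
The plan is to move eigenvectors back and forth between the two linearizations using the chain-rule identities \eqref{Eqn:GLinear1} and \eqref{Eqn:GLinear2} of Proposition~\ref{Prop:Equilibria}, together with the intertwining $P_0 \circ D_1\psi(\trho,0) = \mathrm{id}_{F_\sigma}$ (obtained by differentiating $P_0\psi(\cdot,\eta) = \mathrm{id}$ from Remarks~\ref{Rem:PsiProperties}(a)). Throughout, note that $\bG(\trho,\lambda) = \mathcal{G}_\star(\trho,0)$ and $G(\rho,\lambda) = G_\star(\rho)$ with $\rs = 1/\lambda$, so $D_1\bG(\trho,\lambda) = D_1\mathcal{G}_\star(\trho,0)$ and $D_1 G(\psi(\trho),\lambda) = DG_\star(\psi(\trho))$; the hypothesis $\bG(\trho,\lambda) = 0$ puts us exactly in the setting where \eqref{Eqn:GLinear1}--\eqref{Eqn:GLinear2} are available.

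For the forward implication, suppose $D_1\mathcal{G}_\star(\trho,0)\tilde h = \mu \tilde h$ for some $\tilde h \ne 0$, and set $h := D_1\psi(\trho,0)\tilde h$. Applying $D_1\psi(\trho,0)$ to the eigenvalue relation and invoking \eqref{Eqn:GLinear2} gives
\[
DG_\star(\psi(\trho))\,h = D_1\psi(\trho,0)\,D_1\mathcal{G}_\star(\trho,0)\tilde h = \mu\, D_1\psi(\trho,0)\tilde h = \mu h.
\]
Since $P_0 h = P_0 D_1\psi(\trho,0)\tilde h = \tilde h \ne 0$, we have $h \ne 0$, so $\mu$ is an eigenvalue of $DG_\star(\psi(\trho))$.

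For the reverse implication, suppose $DG_\star(\psi(\trho))h = \mu h$ with $h \ne 0$ and $\mu \ne 0$. By \eqref{Eqn:GLinear1},
\[
\mu h \;=\; DG_\star(\psi(\trho))h \;=\; D_1\psi(\trho,0)\,P_0\,DG_\star(\psi(\trho))h,
\]
so in particular $h$ lies in the range of $D_1\psi(\trho,0)$. Applying $P_0$ to this identity and using $P_0 \circ D_1\psi(\trho,0) = \mathrm{id}$ yields $\mu P_0 h = P_0 DG_\star(\psi(\trho))h$. Setting $\tilde h := P_0 h$ and dividing by $\mu$ (here is precisely where we use $\mu \ne 0$), the displayed line above becomes $h = D_1\psi(\trho,0)\tilde h$. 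Inserting this into \eqref{Eqn:GLinear2} and comparing with the original eigenvalue relation gives
\[
D_1\psi(\trho,0)\,D_1\mathcal{G}_\star(\trho,0)\tilde h = DG_\star(\psi(\trho))\,D_1\psi(\trho,0)\tilde h = DG_\star(\psi(\trho))h = \mu h = \mu\, D_1\psi(\trho,0)\tilde h.
\]
Applying $P_0$ once more (which is the identity on the range of $D_1\psi(\trho,0)$) produces $D_1\mathcal{G}_\star(\trho,0)\tilde h = \mu \tilde h$. Finally $\tilde h \ne 0$, for otherwise $h = D_1\psi(\trho,0)\tilde h = 0$, contradicting the choice of $h$.

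There is no real obstacle: the argument is essentially bookkeeping once Proposition~\ref{Prop:Equilibria} is in hand. The only subtle points are (i) keeping track of where the hypothesis $\mu \ne 0$ is actually used (it is needed to solve for $h$ in terms of $\tilde h$ in the reverse direction, reflecting the fact that the kernels of $D_1\bG$ and $DG_\star$ genuinely differ by the constant mode that $P_0$ projects out), and (ii) verifying nonvanishing of the transported eigenvectors, which in both directions follows immediately from $P_0 \circ D_1\psi(\trho,0) = \mathrm{id}$.
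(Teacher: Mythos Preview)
Your proof is correct and follows essentially the same route as the paper's: transport eigenvectors via $D_1\psi(\trho,0)$ using \eqref{Eqn:GLinear1}--\eqref{Eqn:GLinear2}, with the left-inverse identity $P_0\circ D_1\psi(\trho,0)=\mathrm{id}_F$ standing in for the paper's appeal to injectivity of $D_1\psi$. One small wording issue: $P_0$ is not literally ``the identity on the range of $D_1\psi(\trho,0)$'' (elements of that range need not have zero mean); what you actually use, and what is true, is that $P_0\big(D_1\psi(\trho,0)v\big)=v$ for every $v\in F$, which suffices for your final step.
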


\begin{proof}
{\bf (i)} First, suppose that $D_1 \bG(\trho, \lambda) \tilde{h} = \mu \tilde{h}$ for some 
$\tilde{h} \in F_1 \setminus \{ 0 \},$ and let $h := D \psi(\trho) \tilde{h}$. Then 
$h \in E_1 \setminus \{ 0 \}$, by injectivity of $D \psi(\trho)$, and it follows from
\eqref{Eqn:GLinear2} that 
\[
D_1 G(\psi(\trho), \lambda) h = \mu h.
\]
We also observe that this assertion is true in case $\mu = 0$.

{\bf (ii)} Now suppose that $D_1 G(\psi(\trho), \lambda) h = \mu h$ for some $h \in E_1 \setminus \{ 0 \}$.
We conclude from \eqref{Eqn:GLinear1} that $h \in T_{\psi(\trho)} \mathcal{M}_0$, so that
there exists a unique $\tilde{h} \in F_1 \setminus \{ 0 \}$ for which $h = D \psi(\trho) \tilde{h}$.
Then \eqref{Eqn:GLinear2} shows that
\[
\mu D\psi(\trho) \tilde{h} = D\psi(\trho) D_1 \bG(\trho, \lambda) \tilde{h},
\]
and finally, by injectivity of $D\psi(\trho)$, we conclude that $\mu \tilde{h} = \bG(\trho, \lambda) \tilde{h}$, 
as desired.
\end{proof}

We are now prepared to prove the main result regarding bifurcation of the original 
problem \eqref{ASD} in the setting of $h^{4 + \alpha}(\Tone)$, and instability of 
the bifurcating unduloids.

\begin{thm}[Bifurcation of Full Problem]\label{Thm:FullBifurcation}
Fix $\ell \in \mathbb{N}$. Then:
\begin{enumerate}

\item the set
\begin{equation}\label{Eqn:BifurcCurve}
\Big\{ \psi(\trho_{\ell}(s)) + 1/\lambda_{\ell}(s) : s \in (-\delta_{\ell}, \delta_{\ell}) \Big\} \subset h^{4 + \alpha}(\Tone),
\end{equation}
is an analytic curve of equilibria for the problem \eqref{ASD} which bifurcates subcritically
from the family of cylinders $\rs \in (0,\infty)$, at the cylinder $\rs = 1 / \ell$.

\item there exists some $\varepsilon_{\ell} > 0$ so that for every 
$s \in (-\delta_{\ell}, \delta_{\ell})$
\[
\psi(\trho_{\ell}(s)) + 1/\lambda_{\ell}(s) = R(B, \ell), \qquad \text{for some} \quad B \in (-\varepsilon_{\ell}, \varepsilon_{\ell}),
\]
i.e. the family \eqref{Eqn:BifurcCurve} of equilibria are exactly the even presentations of 
$2 \pi / \ell$--periodic undulary curves in some neighborhood of the cylinder $\rs = 1 / \ell$. 

\item the undulary curves $R(B,\ell)$ are unstable for $|B| > 0$ chosen sufficiently small.

\end{enumerate}
\end{thm}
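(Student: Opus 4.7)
The plan is to obtain each of the three parts by transferring the reduced-problem results of Section~\ref{Sec:Bifurcation} back to the full ASD via the lifting $\psi$ from Section~\ref{Sec:Stability}. For part (a), I would combine Theorem~\ref{Thm:Bifurcation} with Proposition~\ref{Prop:Equilibria}: since the curve $s \mapsto (\trho_\ell(s), \lambda_\ell(s))$ satisfies $\mathcal{G}_{\star}(\trho_\ell(s), 0) = 0$ at $\rs = 1/\lambda_\ell(s)$, Proposition~\ref{Prop:Equilibria} immediately gives $G_{\star}(\psi(\trho_\ell(s))) = 0$, so that $\rho_\ell(s) := \psi(\trho_\ell(s)) + 1/\lambda_\ell(s)$ is an equilibrium of \eqref{ASD}. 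Analyticity of $s \mapsto \rho_\ell(s)$ follows from the analyticity of $\psi$, $\trho_\ell$, and $\lambda_\ell$, with mapping into $h^{4+\alpha}(\Tone)$ guaranteed by Remarks~\ref{Rem:PsiProperties}(g). The subcritical pitchfork character is inherited directly from Theorem~\ref{Thm:BifurcType}, since the $\lambda$-component of the parametrization is unchanged.

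For part (b), the idea is to exhibit the family of $2\pi/\ell$-periodic even undularies $R(B,\ell)$ as a second branch of nontrivial solutions to $\bG = 0$ through $(0, \ell)$, and then invoke the local uniqueness clause of Crandall--Rabinowitz contained in Theorem~\ref{Thm:Bifurcation}. Concretely, for $B$ in a small neighborhood of $0$, the relation \eqref{Eqn:HandBRelation} with $k = \ell$ implicitly defines an analytic function $\mathcal{H}(B)$ with $\mathcal{H}(0) = \ell$; the corresponding curve $R(B,\ell)$ is a smooth positive $2\pi/\ell$-periodic equilibrium, even by Proposition~\ref{Prop:Even} up to a trivial translation, and by Proposition~\ref{Prop:Equilibria} the projection $P_0 R(B,\ell)$ is a zero of $\bG(\cdot, \mathcal{H}(B))$ in $F_{1,{\sf even}}$. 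Since Theorem~\ref{Thm:Bifurcation} asserts that every nontrivial solution in a neighborhood of $(0, \ell)$ lies on the curve \eqref{Eqn:BifurcatingBranch}, the two analytic curves must coincide up to reparametrization, which yields part (b).

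For part (c), the strategy is to combine Theorem~\ref{Thm:BifurcType} with Proposition~\ref{Prop:Eigen} and then transfer the contradiction argument of Theorem~\ref{Thm:Instability} from the cylinder to the bifurcating unduloid. Theorem~\ref{Thm:BifurcType} gives $\hat{\mu}_\ell(s) > 0$ for small $|s| > 0$, and Proposition~\ref{Prop:Eigen} promotes this to a positive eigenvalue of the linearization $DG_{\star}(\psi(\trho_\ell(s)))$ of the full problem at the unduloid. The linearization of $G$ about $R(B,\ell)$ is still a fourth-order uniformly elliptic operator with real-analytic coefficients, so by \cite[Theorem 4.4]{LeC11} it generates an analytic semigroup with compact resolvent, giving a spectral gap of exactly the type exploited in Section~\ref{Sec:Instability}. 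The quasilinear decomposition \eqref{Eqn:ADefined}--\eqref{Eqn:FDefined} delivers the Lipschitz-type estimate \eqref{estimate-g} for the corresponding nonlinearity $g$ around $\psi(\trho_\ell(s))$, and the maximal regularity hypothesis of Section~\ref{Sec:Instability} holds at the unduloid by Lemma~\ref{Lem:AandFReg}. The contradiction argument in parts \textbf{(i)}--\textbf{(iii)} of Theorem~\ref{Thm:Instability} then carries over essentially verbatim, producing sequences of initial data which leave a fixed $h^{4\mu+\alpha}$-neighborhood of the unduloid.

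The principal obstacle is the identification in part (b): one must verify that the curve $B \mapsto (P_0 R(B,\ell), \mathcal{H}(B))$ is genuinely nontrivial (i.e. not identically contained in the trivial branch) and meets the smoothness/parity requirements of $F_{1,{\sf even}}$, so that the Crandall--Rabinowitz uniqueness applies. This amounts to checking the leading Fourier coefficient of $P_0 R(B,\ell)$ is a nonzero analytic function of $B$ with $B = 0$ as a simple zero, which follows from Delaunay's geometric construction and the explicit formulas \eqref{Eqn:Unduloids}--\eqref{Eqn:HandBRelation}. The remaining steps are routine applications of the tools already assembled in Sections~\ref{Sec:Stability} and~\ref{Sec:Instability}.
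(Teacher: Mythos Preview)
Your treatment of parts (a) and (c) matches the paper's proof essentially line for line: lift equilibria via Proposition~\ref{Prop:Equilibria}, carry over the pitchfork data from Theorem~\ref{Thm:BifurcType}, and for instability use Proposition~\ref{Prop:Eigen} to transplant the unstable eigenvalue $\hat\mu_\ell(s)>0$ to $D_1 G(\psi(\trho_\ell(s)),\lambda_\ell(s))$ and then rerun the spectral--gap contradiction of Theorem~\ref{Thm:Instability} at the unduloid.

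Part (b), however, takes a different route from the paper and contains a gap. You propose to exhibit $B\mapsto (P_0 R(B,\ell),\mathcal{H}(B))$ as a second nontrivial branch of $\bG=0$ through $(0,\ell)$ and then quote the uniqueness clause of Crandall--Rabinowitz from Theorem~\ref{Thm:Bifurcation}. The problem is that $\bG(\trho,\lambda)=P_0\, G\big(\psi(\trho,0)+1/\lambda\big)$ is built with $\eta=0$, so the lifting $\psi(\cdot,0)$ takes values in the equivolume manifold $\mathcal{M}^{4+\alpha}_{0}$ centered at $\rs=1/\lambda$. Consequently $P_0 R(B,\ell)$ is a zero of $\bG(\cdot,\lambda)$ only if $R(B,\ell)-1/\lambda\in\mathcal{M}^{4+\alpha}_{0}$, i.e.\ only if $\int_{\Tone} R(B,\ell)^2\,dx = 2\pi/\lambda^2$. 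This forces $\lambda$ to be the reciprocal of the \emph{equivolume} radius of $R(B,\ell)$, which does not coincide with $\mathcal{H}(B)$ for $B\neq 0$; your appeal to Proposition~\ref{Prop:Equilibria} therefore does not go through as stated. The repair --- replacing $\mathcal{H}(B)$ by the equivolume parameter and checking that the resulting curve is still analytic and nontrivial through $(0,\ell)$ --- is possible but adds work you did not budget for.

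The paper sidesteps this entirely. Since Section~\ref{Sec:Equilibria} already identifies \emph{every} equilibrium of \eqref{ASD} as an undulary, and since $\psi$ preserves evenness by Remarks~\ref{Rem:PsiProperties}(f), the lifted curve \eqref{Eqn:BifurcCurve} is automatically a continuous family of even undularies passing through $R(0,\ell)=1/\ell$; continuity of the period then forces it to coincide with $\{R(B,\ell)\}$ for $B$ near $0$. No second invocation of Crandall--Rabinowitz uniqueness is needed.
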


\begin{proof}
{\bf (a)} It follows from Proposition~\ref{Prop:Equilibria} and Theorem~\ref{Thm:Bifurcation} 
that the family
\[
\Big\{ (\psi(\trho_{\ell}(s)), \lambda_{\ell}(s)): s \in (-\delta_{\ell}, \delta_{\ell}) \Big\} \subset E_1 \times (0, \infty)
\]
consists of solutions to the bifurcation equation \eqref{Eqn:FullBifurc}.  
The regularity of the curve follows from the regularity of the bifurcating branch in Theorem~\ref{Thm:Bifurcation} 
and regularity of the mapping $\psi$. By definition of the bifurcation function $G(\cdot, \lambda)$,
it follows that the family \eqref{Eqn:BifurcCurve} are indeed equilibria of the original
equation \eqref{ASD} which intersect the family of cylinders at $\rs = 1 / \ell$, when $s = 0$. 
Meanwhile, the bifurcation parameter $\lambda$ remains unchanged in lifting from 
the reduced problem to the full problem, hence we see that
\[
\dot{\lambda}_{\ell}(0) = 0 \quad \text{and} \quad \ddot{\lambda}_{\ell}(0) < 0,
\]
from Theorem~\ref{Thm:BifurcType}, and so we conclude that the given curve bifurcates subcritically.

\medskip

{\bf (b)} By Remarks~\ref{Rem:PsiProperties}(f) it follows that $\psi$ preserves the 
symmetry of even functions on $\Tone$, and since $\trho_{\ell}(s) \in F_{1,{\sf even}}$, it follows that
the functions in the family \eqref{Eqn:BifurcCurve} are even on $\Tone$. Meanwhile, by 
the characterization of equilibria established in Section~\ref{Sec:Equilibria}, and the fact that
\[
\psi(\trho_{\ell}(0)) + 1 / \lambda_{\ell}(0) = 1 / \ell = R(0, \ell),
\]
it follows that the family \eqref{Eqn:BifurcCurve} must coincide with the family of $2 \pi/ \ell$--periodic
undulary curves $R(B,\ell)$, for some continuum of values $B \in (-\varepsilon_{\ell}, \varepsilon_{\ell})$.

\medskip

{\bf (c)} To prove that the unduloids \eqref{Eqn:BifurcCurve} are unstable, we 
mimic the proof of Theorem~\ref{Thm:Instability} in the current setting. In particular, define
\begin{align*}
&G_{\ell}(\rho, s) := G(\rho + \psi(\trho_{\ell}(s)), \lambda_{\ell}(s)), \qquad \text{and}\\
&L_{\ell}(s) := D_1 G_{\ell}(0,s) = D_1 G(\psi(\trho_{\ell}(s)), \lambda_{\ell}(s)),
\end{align*}
acting on functions $\rho \in E_1$. It follows by Theorem~\ref{Thm:BifurcType} and 
Proposition~\ref{Prop:Eigen} that 
\[
\sigma(L_{\ell}(s)) \cap [{\rm Re} \, z > 0] \ne \emptyset,
\]
provided $|s| > 0$ is chosen sufficiently small. Meanwhile, the operator 
$G_{\ell}(\cdot,s)$ has a similar quasilinear structure as $\mathcal{G}_{\star}$ 
and so the analogue to inequality \eqref{estimate-g} is also derived for
\[
g_{\ell}(\rho,s) := G_{\ell}(\rho,s) - L_{\ell}(s) \rho.
\]
Utilizing  \cite[Proposition I.7.2]{KIE12} and the explicit characterization 
\eqref{Eqn:SpectralSet} of the spectra $\sigma(DG_{\star}(\eta))$, we can control 
the eigenvalues of the perturbed linearization $L_{\ell}(s) \, ,$ so that, for sufficiently 
small values of $|s| > 0$, we can derive the necessary \emph{spectral gap} condition
\begin{equation*}
[\omega - \gamma \le {\rm Re} \,z \le \omega + \gamma] \cap \sigma(L_{\ell}(s)) = \emptyset
\quad \text{and} \quad
\sigma_+ := [{\rm Re} \, z > \omega + \gamma] \cap \sigma(L_{\ell}(s)) \ne \emptyset \, ,
\end{equation*}
for some $\gamma, \omega > 0$.
The remainder of the proof now follows as in the proof of Theorem~\ref{Thm:Instability} with
the observation that $-L_{\ell}(s)$ satisfies maximal regularity properties, which follows 
by uniform ellipticity of $L_{\ell}(s)$ and an argument similar to the proof of the stated Claim in
the proof of Lemma~\ref{Lem:AandFReg}, in the appendix. 
\end{proof}

\begin{remark}
\label{remark-bifurcation}
Note that we only prove nonlinear instability for unduloids with sufficiently small
parameter values $|B| > 0$. 
Relying on previous results in
\cite{ATH87, VOG87, VOG89} for the stationary {\em trapped drop} capillary problem,
the authors of \cite{BBW98} observe that the linearized problem
at nontrivial unduloids always has an unstable eigenvalue.
Applying this observation in our setting, we can in fact conclude nonlinear
instability of the entire family of nontrivial unduloids $R(B,k), |B| \in (0,1), k \in \mathbb{N}$.
\end{remark}

\section{Appendix}
In this section we outline the proofs of
Lemma~\ref{Lem:AandFReg} and 
Propositions~\ref{Prop:Existence} and \ref{Prop:SolutionRegularity},
see \cite{LeCD} for more details. 
\begin{proof}[Proof of Lemma \ref{Lem:AandFReg}] 
Fix $\mu \in [1/2,1]$ as indicated.
\medskip\\
{\bf CLAIM:} $\mathcal{A}(\rho) \in \mathcal{M}R_{\nu}(E_1,E_0)$ for $\rho \in V_{\mu}\, , \; \nu \in (0, 1].$
This claim will follow from \cite{LeC11}, though the setting of that paper differs slightly from the current
setting and warrants a brief discussion. First, for $\rho \in V_{\mu}$ define the coefficients
\[
b_4(\rho) := \frac{1}{(1 + \rho_x^2)^2} \quad \text{and} \quad 
b_3(\rho) := \frac{2 \rho_x \big( 1 + \rho_x^2 - 3\rho \rho_{xx} \big)}{\rho \big( 1 + \rho_x^2 \big)^3},
\]
so that $\mathcal{A}(\rho) = b_4(\rho) \, \partial_x^4 + b_3(\rho) \, \partial_x^3$. By our
choice of $\mu$, it follows that $V_{\mu} \subset h^{2 + \alpha}(\Tone, \mathbb{R})$, so that $b_4, b_3 \in E_0$ and 
$\mathcal{A}(\rho)$ is a uniformly elliptic differential operator. By \cite[Theorem 5.2]{LeC11}  
we conclude that 
\[
\mathcal{A}(\rho) \in \mathcal{MR}_{\nu} \big( h^{4 + \alpha}(\Tone, \mathbb{C}), h^{\alpha}(\Tone, \mathbb{C}) \big),
\qquad \nu \in (0, 1],
\]
where we utilize the notation $h^{k + \alpha}(\Tone, \mathbb{C})$ to be clear that the space consists 
of $\mathbb{C}$--valued functions over $\Tone$, and does not coincide with the spaces $E_{\mu}$ 
being considered herein. However, $h^{k + \alpha}(\Tone, \mathbb{C})$ does
coincide with the complexification of $h^{k + \alpha}(\Tone, \mathbb{R})$ (up to equivalent norms) and it is a 
straightforward exercise to see that the property of maximal regularity continues to hold under restriction
to the subspaces $h^{\sigma}(\Tone, \mathbb{R})$.

\medskip

The regularity assertion for $(\mathcal{A},f)$ follows from the 
fact that the mappings
\begin{equation*}
\left[ r \mapsto 1/r \right]: V_0 \to  E_0, \;
[r \mapsto r_x]: h^{\sigma + 1}(\Tone) \to h^{\sigma}(\Tone), \;
[(r,s) \mapsto rs]: E_0 \times E_0\to E_0
\end{equation*}
are real analytic, and the additional observation that the mapping 
$\mathcal{A}: V_{\mu} \rightarrow \mathcal{L}(E_1,E_0)$ inherits the regularity
of the coefficients $b_3, b_4: V_{\mu} \rightarrow E_0$ and the fact that
$\mathcal{M}R_{\mu}(E_1, E_0)$ is an open subset of $\mathcal{L}(E_0, E_1)$, 
c.f. \cite[Lemma 2.5(a)]{CS01}.
\end{proof}

\begin{proof}[Proof of Proposition \ref{Prop:Existence}]
In case $\mu \in [1/2, 1)$, the result follows from Lemma~\ref{Lem:AandFReg} and 
\cite[Theorems 4.1, 5.1 and 6.1]{CS01}. When $\mu = 1$ we note that the existence
and uniqueness of a maximal solution 
\[
r(\cdot, r_0) \in C^1(J(r_0), E_0) \cap C(J(r_0), E_1)
\]
follows from \cite[Theorem 4.1(b)]{CS01}. However, for the semiflow properties, we
will consider \eqref{ASD} as a 
fully nonlinear equation, and apply results of Angenent \cite{AN90}. In particular, 
for $r \in V_1$ we use the representation $G(r) = -\mathcal{A}(r)r + f(r)$ and 
\eqref{Eqn:ADefined}--\eqref{Eqn:FDefined} to see that the Fr\'echet derivative $DG$
has the structure
\[
DG(r) = - \frac{1}{(1 + r_x^2)^2} \; \partial_x^4 + \sum_{k = 0}^3 B_k(r) \; \partial_x^k,
\]
where the coefficients $B_k(r) \in E_0$ for every $r \in V_1$, $k = 0, \ldots, 3$.
From this computation it follows that $-DG(r)$ is a uniformly elliptic operator from 
$E_1$ to $E_0$ and so, using the results of \cite{LeC11} as in the highlighted Claim 
in the proof of
Lemma~\ref{Lem:AandFReg} above, we see that $-DG(r) \in \mathcal{M}R_1(E_1, E_0)$
for all $r \in V_1$. Now the fact that \eqref{ASD} generates an analytic semiflow
on $V_1$ follows from \cite[Corollary 2.9]{AN90}.
\end{proof}

\begin{proof}[Proof of Proposition \ref{Prop:SolutionRegularity}]
For $a \in \mathbb{R}$ let $T_a : \Tone \to \Tone$ be the translation operator, where
$T_a(x)$ denotes the unique element in $\Tone$ that is in the coset 
$[x + a] \in \mathbb{R}/2 \pi \mathbb{Z}$ of $(x + a)$. 
$T_a$ naturally acts on functions $u \in C(\Tone, \mathbb{R})$ by virtue of 
$(T_a u)(x) := u(T_a (x))$.
As in \cite{ES96} one shows that, for $a \in \mathbb{R}$, the family of translations
$\{T_{ta} : t \in \mathbb{R} \}$ induces a strongly continuous group of contractions
on any of the spaces $E_{\mu}$, with infinitesimal generator $A_a$
given by
\[
 D(A_a) = h^{1 + 4 \mu + \alpha}(\Tone, \mathbb{R}), \qquad A_a = a \partial_x.
\]

Let $r_0 \in V_{\mu}$ be fixed, and let 
\[
r = r(\cdot, r_0) \in C_{1 - \mu}^1(J(r_0), E_0) \cap C_{1 - \mu}(J(r_0), E_1)
\]
be the unique solution to \eqref{ASD2}
on the maximal interval of existence $J(r_0) = [0, t^+(r_0))$.
Let $t_1 \in (0,t^+(r_0))$ be fixed and set $I := [0, t_1]$. 
Then there exists  $\delta > 0$ such that $(1 + \lambda) t \in J(r_0)$ 
for all $(t, \lambda) \in I \times (-\delta, \delta)$.
Finally, for  $(\lambda, a) \in W := (-\delta,\delta)^2$ we set
\[
r_{\lambda, a}(t) := T_{ta} r((1 + \lambda) t), \qquad t \in I;
\]
i.e. $r_{\lambda, a}(t,x) = r((1 + \lambda) t, T_{ta}(x) )$ for
$(t,x) \in I \times \Tone$.
One verifies that 
\[
r_{\lambda, a} \in \mathbb{E}_1(I) := BU\!C_{1 - \mu}^1(I, E_0) \cap BU\!C_{1 - \mu}(I, E_1).
\]
Moreover, since the nonlinear mapping $[r \mapsto G(r)]$ is equivariant with respect to translations,
i.e. $T_{b} \, G(r) = G(T_b \, r)$ for any $b \in \mathbb{R}$, we obtain that 
$r_{ \lambda, a}$ is a solution of the parameter-dependent equation
\begin{equation}
\label{parameter-dependent}
\begin{cases} \partial_t v = (1 + \lambda) G(v) + a \partial_x v, &\text{$t > 0$,}\\ v(0) = r_0, \end{cases}
\end{equation}
on the time interval $I$.

Now, for $\mathbb{U}(I) := \mathbb{E}_1(I) \cap C(I,V)$ 
we define
\begin{equation*}
\Phi : \mathbb{U}(I) \times W \to \mathbb{E}_0(I) \times E_\mu,
\quad \Phi(v, (\lambda, a)) = \big( \partial_t v - (1 + \lambda)G(v) - a \partial_x v,\, \gamma v - r_0 \big), 
\end{equation*}
where $\mathbb{E}_0(I) := BU\!C_{1 - \mu}(I, E_0)$, 
and we note that $\Phi(r_{\lambda, a}, (\lambda, a)) = (0,0)$.
Moreover,
\begin{equation*}
\Phi \in C^{\omega} \Big( \mathbb{U}(I) \times W, \mathbb{E}_0(I) \times E_\mu \Big), \quad
D_1 \Phi(r, (0,0))= \left( \frac{d}{dt}- DG(r), \gamma \right),
\end{equation*}
where we use the same notation for $r = r(\cdot, r_0)$ and its restriction to the time interval $I$.
Exactly as in the proof of \cite[Theorem 6.1]{CS01} one shows that
\begin{equation*}
D_1 \Phi(r, (0,0)) \in \mathcal{L}_{isom}(\mathbb{E}_1(I), \mathbb{E}_0(I) \times E_{\mu}).
\end{equation*}
Finally, according to the implicit function theorem, 
c.f. \cite[Theorem 15.3]{DE85} or \cite[(10.2.1)]{DI60}, there exist 
a neighborhood of $r$ in $\mathbb{E}_1(I)$ and a neighborhood of $(0,0)$ in $\mathbb{R}^2$,
which we will again denote by $\mathbb{U}(I)$ and $W$, respectively, 
and a mapping
$g \in C^{\omega}(W,\mathbb{E}_1(I))$
such that
\begin{equation*}
\Phi(v,(\lambda,a))=(0,0)\quad\text{if and only if}
\quad v=g(\lambda,a)
\end{equation*} 
whenever $(v,(\lambda,a))\in\mathbb{U}(I)\times W$.
We conclude that $g(\lambda, a) = r_{\lambda, a}$ and 
\begin{equation}
[(\lambda,a) \mapsto r_{\lambda,a}]\in C^\omega(W,\mathbb{U}(I)).
\end{equation}
For $t_0 \in (0, t_1)$ and $x_0 \in \Tone$ fixed,
we see that
\begin{equation}
[(\lambda, a) \mapsto r((1 + \lambda) t_0, T_{t_0 a}(x_0) )]
\in C^{\omega}(W, \mathbb{R}),
\end{equation}
and the assertion follows
since $(t_0, x_0)$ can be chosen arbitrarily.
\end{proof}

\section*{Acknowledgements}
The first author would like to thank the Department of Mathematics at Vanderbilt University
for its support during the preparation of this manuscript, a slightly altered version of which 
appears as a chapter in my PhD Dissertation. The authors would also like to thank Juraj
Foldes for helpful conversations regarding the axisymmetric surface diffusion flow.
The authors are thankful to the anonymous reviewers for suggestions that
helped to improve the presentation of the paper.



\begin{thebibliography}{99}

\bibitem{AM90} H. Amann, 
{\em Ordinary Differential Equations. An Introduction to Nonlinear Analysis.} de Gruyter Studies in Mathematics, 13.
Walter de Gruyter \& Co., (1990).

\bibitem{Am93} H. Amann,
Nonhomogeneous linear and quasilinear elliptic and parabolic boundary value problems.
Function spaces, differential operators and nonlinear analysis (Friedrichroda, 1992), 
9--126, {\em Teubner-Texte Math., 133.} Teubner, Stuttgart, (1993).

\bibitem{AM95} H. Amann, 
{\em Linear and Quasilinear Parabolic Problems,} Monographs in Mathematics vol. 89.
Birkh\"auser Verlag, (1995).



\bibitem{AM05} H. Amann,
Quasilinear parabolic problems via maximal regularity. 
{\em Adv. Differential Equations} {\bf 10} (2005), no. 10, 1081--1110. 



\bibitem{AN90} S. Angenent,
Nonlinear analytic semiflows.
{\em Proc. of the Royal Soc. of Edinburgh} {\bf 115A} (1990), 91--107.

\bibitem{An90b} S. Angenent,
Parabolic equations for curves on surfaces. I. Curves with p-integrable curvature. 
{\em Ann. of Math. (2)} {\bf 132} (1990),  451--483.

\bibitem{An94} S. Angenent,  
Some recent results on mean curvature flow. 
Recent advances in partial differential equations (El Escorial, 1992), 1Ð-18, 
RAM Res. Appl. Math., 30, Masson, Paris, 1994. 

\bibitem{As12} T. Asai,
Quasilinear parabolic equation and its applications to
fourth order equations with rough initial data.
J. Math. Sci. Univ. Tokyo, {\bf 19} (2012), 507--532.





\bibitem{ATH87} M. Athanassenas,
A variational problem for contant mean curvature surfaces with free boundary.
{\em J. Reine Angew. Math.} {\bf 377} (1987), 97--107.



\bibitem{BBW98} A.J. Bernoff, A.L. Bertozzi, T.P. Witelski, 
Axisymmetric surface diffusion: dynamics and stability of self-similar pinchoff. 
{\em J. Statist. Phys.} {\bf 93} (1998), no. 3-4, 725--776. 


\bibitem{CT94} J.W. Cahn, J.E. Taylor, 
Surface motion by surface diffusion. 
{\em Acta Metallurgica} {\bf 42} (1994), 1045--1063.

\bibitem{CS01} Ph. Cl\'ement, G. Simonett,
Maximal regularity in continuous interpolation spaces and quasilinear parabolic equations. 
{\em J. Evol. Equ.} {\bf 1 } (2001),  no. 1, 39--67. 

\bibitem{CFM95} B. Coleman, R. Falk, M. Moakher,
Stability of cylindrical bodies in the theory of surface diffusion.
{\em Phys. D} {\bf 89} (1995), no. 1--2, 123--135.

\bibitem{CFM95a} B. Coleman, R. Falk, M. Moakher,
Diffusion in axially symmetric surfaces. Trends in applications of mathematics to mechanics (Lisbon, 1994), 99--111,
{\em Pitman Monogr. Surveys Pure Appl. Math., 77.} Longman, Harlow (1995).

\bibitem{CFM96} B. Coleman, R. Falk, M. Moakher,
Space--time finite element methods for surface diffusion with applications to the theory of the stability of cylinders.
{\em SIAM J. Sci. Comput.} {\bf 17} (1996), no. 6, 1434--1448.

\bibitem{CR71} M. Crandall, P. Rabinowitz,
Bifurcation from simple eigenvalues.
{\em J. Functional Analysis} {\bf 8} (1971), 321--340.

\bibitem{DPG79} G. Da Prato, P. Grisvard,
Equations d'évolution abstraites non linéaires de type parabolique. 
{\em Ann. Mat. Pura Appl. (4)} {\bf 120 }  (1979), 329--396.

\bibitem{DPL88} G. Da Prato, A. Lunardi,
Stability, instability and center manifold theorem for fully nonlinear autonomous parabolic equations in Banach space.
{\em Arch. Rational Mech. Anal.} {\bf 101} (1988), no. 2, 115--141.


\bibitem{DMVW} S.H. Davis, M.J. Miksis, P.W. Voorhees, H. Wong,
Universal pinch off of rods by capillarity driven surface diffusion.
{\em Scripta Mater.} {\bf 39} (1998), 55--60.

\bibitem{DE85} K. Deimling,
{\em Nonlinear Functional Analysis.} Springer-Verlag, Berlin, (1985).

\bibitem{DE41} C. Delaunay,
Sur la surface de r\'evolution dont la courbure moyenne est constante.
{\em J. Math. Pures. Appl. Ser. 1} {\bf 6} (1841), 309--320.
%

\bibitem{DI60} J. Dieudonn\'e,
{\em Foundations of Modern Analysis,} Pure and Applied Mathematics, vol. X. Academic Press, New York--London, (1960).










\bibitem{Ei69} S.D. Eidel'man, 
{\em Parabolic systems.} Translated from the Russian by Scripta Technica, London North-Holland Publishing Co., Amsterdam-London; Wolters-Noordhoff Publishing, Groningen, 1969. 

\bibitem{EZ98} S.D. Eidel'man, N.V. Zhitarashu, 
{\em Parabolic boundary value problems.} 
Translated from the Russian original by Gennady Pasechnik and Andrei Iacob. 
Operator Theory: Advances and Applications, 101. BirkhŠuser Verlag, Basel, 1998.

\bibitem{EM11} J. Escher, B.V. Matioc,
On the parabolicity of the Muskat problem: well--posedness, fingering, and stability results.
{\em Z. Anal. Anwend.} {\bf 30} (2011), no. 2, 193--218.

\bibitem{EMS98} J. Escher, U.F. Mayer, G. Simonett,
The surface diffusion flow for immersed hypersurfaces.  
{\em SIAM J. Math. Anal.} {\bf 29}  (1998),  no. 6, 1419--1433.

\bibitem{EMu10} J. Escher, P. Mucha, 
The surface diffusion flow on rough phase spaces. 
{\em Discrete Contin. Dyn. Syst.} {\bf 26} (2010), no. 2, 431--453.


\bibitem{ES96} J. Escher, G. Simonett, 
Analyticity of the interface in a free boundary problem. 
{\em Math. Ann.} {\bf 305} (1996), 439--459.

\bibitem{ES98b} J. Escher, G. Simonett, 
The volume preserving mean curvature flow near spheres. 
{\em Proc. Amer. Math. Soc.} {\bf 126} (1998), no. 9, 2789--2796.

\bibitem{ES98c} J. Escher, G. Simonett, 
A center manifold analysis for the Mullins-Sekerka model. 
{\em J. Differential Equations} {\bf 143} (1998), no. 2, 267--292. 






\bibitem{Fi00} P. Fife,  
Models for phase separation and their mathematics.
{\em Electron. J. Differential Equations 2000} (2000), No. 48, 1--26.

\bibitem{GaHa86} M. Gage, R.S. Hamilton,  
The heat equation shrinking convex plane curves.
{\em J. Differential Geom.} {\bf 23} (1986), no. 1, 69Ð-96.

\bibitem{Ha82a} R.S. Hamilton,
The inverse function theorem of Nash and Moser. 
{\em Bull. Amer. Math. Soc. (N.S.)} {\bf 7} (1982), no. 1, 65Ð-222. 


\bibitem{HER51} C. Herring,
Surface diffusion as a motivation for sintering.
{\em The Physics of Powder Metallurgy,} W.E. Kingston, ed.
McGraw-Hill, (1951), ch. 8, 143--179.

\bibitem{Hu84} G. Huisken, 
Flow by mean curvature of convex surfaces into spheres. 
{\em J. Differential Geom.} {\bf 20} (1984), no. 1, 237--266.

\bibitem{Hu87} G. Huisken,
The volume preserving mean curvature flow.
{\em J. Reine Angew. Math.} {\bf 382} (1987), 35--48.

\bibitem{HuiPo99} G. Huisken, A. Polden, 
Geometric evolution equations for hypersurfaces. 
Calculus of variations and geometric evolution problems (Cetraro, 1996), 45-Ð84,
Lecture Notes in Math., 1713, Springer, Berlin, 1999. 

\bibitem{KAT76} T. Kato,
{\em Perturbation Theory for Linear Operators.} Second Edition.
Grundlehren der Mathematischen Wissenschaften, Band 132. Springer-Verlag, Berlin-New York, (1976).

\bibitem{KEN80} K. Kenmotsu,
Surfaces of revolution with prescribed mean curvature.
{\em T$\hat{\text{o}}$hoku Math. J. (2)} {\bf 32} (1980), no. 1, 147--153.

\bibitem{KIE12} H. Kielh\"ofer,
{\em Bifurcation Theory. An Introduction With Applications to Partial Differential Equations.} Second Edition.
Applied Mathematical Sciences, 156. Springer, New York, (2012).

\bibitem{KPW10} M. K\"ohne, J. Pr\"uss, M. Wilke, 
On quasilinear parabolic evolution equations in weighted $L_p$-spaces. 
{\em J. Evol. Equ. } {\bf 10} (2010), no. 2, 443--463. 

\bibitem{KoLa12} H. Koch, T. Lamm,
Geometric flows with rough initial data. 
{\em Asian J. Math.} {\bf 16} (2012), no. 2, 209Ð-235. 

\bibitem{KS01} E. Kuwert, R. Sch\"atzle, 
The Willmore flow with small initial energy. 
{\em J. Differential Geom.} {\bf 57} (2001), no. 3, 409--441.

\bibitem{KS02} E. Kuwert, R. Sch\"atzle, 
Gradient flow for the Willmore functional. 
{\em Comm. Anal. Geom.} {\bf 10} (2002), no. 2, 307--339.

\bibitem{LSU} O.A. Ladyzenskaja, V.A. Solonnikov, N.N Ural\'ceva,
{\em Linear and quasilinear equations of parabolic type.}
Translated from the Russian by S. Smith. Translations of Mathematical Monographs, 
Vol. 23 American Mathematical Society, Providence, R.I. 1968


\bibitem{LeC11} J. LeCrone,
Elliptic operators and maximal regularity on periodic little-H\"older spaces.
{\em J. Evol. Equ.} {\bf 12} (2012), 295--325. 

\bibitem{LeCD} J. LeCrone,
On the axisymmetric surface diffusion flow.
(PhD Dissertation) Vanderbilt University (2012).

\bibitem{LUN95} A. Lunardi,
{\em Analytic Semigroups and Optimal Regularity in Parabolic Problems.}
Birkh\"auser Verlag, (1995).

\bibitem{MS79} T.F. Marinis, R.F. Sekerka,
A model for capillary induced instabilities in directionally solidified eutectic alloys.
in {\em Conference on In Situ Composites - III,} J.L. Walter et al, ed. (Ginn Custom Publishing, Lexington, MA, 1979), 86--96.

\bibitem{MS82} T.F. Marinis, R.F. Sekerka, 
Dynamics of morphological change during solid--solid transformations.
in {\em Solid--solid Phase Transformations,} H.I. Aaronson et al, ed.
(The Metallurgical Society of AIME, New York, 1982), 67--86.

\bibitem{Mas80} K. Masuda,
On the regularity of solutions of the nonstationary Navier-Stokes equations.  
Approximation methods for Navier-Stokes problems. 
(Proc. Sympos., Univ. Paderborn, Paderborn, 1979). 
{\em Lecture Notes in Math., 771}, Springer, Berlin, 1980, 360--370. 



\bibitem{May00} U. Mayer,
A numerical scheme for moving boundary problems that are gradient flows for the area functional.
{\em European J. Appl. Math.} {\bf 11} (2000), no. 1, 61--80.

\bibitem{May01} U. Mayer,  
Numerical solutions for the surface diffusion flow in three space dimensions. 
{\em J. Comput. Appl. Math.} {\bf 20} (2001) no. 3, 361Ð-379. 
%

\bibitem{McCWW11} J. McCoy, G. Wheeler, G. Williams, 
Lifespan theorem for constrained surface diffusion flows.
{\em Math. Z.} {\bf 269} (2011), no. 1-2, 147Ð-178.

\bibitem{MUL57} W.W. Mullins,
Theory of thermal grooving.
{\em J. Appl. Phys. } {\bf 28} (1957), 333--339.

\bibitem{MN65} W.W. Mullins, F.A. Nichols,
Morphological changes of a surface of revolution due to capillarity--induced surface diffusion.
{\em J. Appl. Phys.} {\bf 36} (1965), 1826--1835.

\bibitem{MN65a} W.W. Mullins, F.A. Nichols,
Surface (interface) and volume diffusion contributions to morphological changes driven by capillarity.
{\em Trans. AIME} {\bf 233} (1965), 1840--1848.


\bibitem{Po96} A. Polden,
Curves and surfaces of least total curvature and fourth--order flows.
(PhD Dissertation) Universit\"at T\"ubingen (1996).

\bibitem{PRO97} G. Prokert,
Parabolic evolution equations for quasistationary free boundary problems in capillary fluid mechanics.
(PhD Dissertation) Technische Univ. Eindhoven, Eindhoven (1997).

\bibitem{Pru03} J. Pr\"uss,
Maximal regularity for evolution equations in $L_p$-spaces.
{\em Conf. Semin. Mat. Univ. Bari} {\bf 285} (2003), 1--39.




\bibitem{PSZ12} J. Pr\"uss, G. Simonett, R. Zacher,
Qualitative behavior of solutions for thermodynamically consistent Stefan problems with surface tension.
{\em Arch. Ration. Mech. Anal.} {\bf 207} (2013), 611--667.
(DOI) 10.1007/s00205--012--0571--y.


\bibitem{PW10} J. Pr\"uss, M. Wilke,
{\em Gew\"ohnliche Differentialgleichungen und Dynamische Systeme.}
(Ordinary Differential Equations and Dynamical Systems.)
Grundstudium Mathematik, Birkh\"auser Verlag, (2010).


\bibitem{Sha04} J.J Sharples, 
Linear and quasilinear parabolic equations in Sobolev space. 
{\em J. Differential Equations} {\bf 202} (2004), no. 1, 111Ð-142. 

\bibitem{Sim95} G. Simonett, 
Center manifolds for quasilinear reaction-diffusion systems.
{\em Differential Integral Equations} {\bf 8} (1995), 753--796.

\bibitem{Sim01} G. Simonett, 
The Willmore flow near spheres. 
{\em Differential Integral Equations} {\bf 14} (2001), no. 8, 1005--1014.

\bibitem{VOG87} T.I. Vogel,
Stability of a liquid drop trapped between two parallel planes.
{\em SIAM J. Appl. Math.} {\bf 47} (1987), no. 3, 516--525.

\bibitem{VOG89} T.I. Vogel,
Stability of a liquid drop trapped between two parallel planes. II. General contact angles.
{\em SIAM J. Appl. Math} {\bf 49} (1989), no. 4, 1009--1028.

\bibitem{VON08} E. Vondenhoff,
Long--time asymptotics of Hele--Shaw flow for perturbed balls with injection and suction.
{\em Interfaces Free Bound.} {\bf 10} (2008), no. 4, 483--502.



\bibitem{Wh11} G. Wheeler,
Lifespan theorem for simple constrained surface diffusion flows. 
{\em J. Math. Anal. Appl.} {\bf 375}  (2011), no. 2, 685-Ð698. 

\bibitem{Wh12} G. Wheeler, 
Surface diffusion flow near spheres. 
{\em Calc. Var. Partial Differential Equations} {\bf 44} (2012), no. 1-2, 131Ð-151. 

\end{thebibliography}
\end{document}